\newcommand{\be}{\begin{equation}}
\newcommand{\ee}{\end{equation}}
\newcommand{\ba}{\begin{eqnarray}}
\newcommand{\ea}{\end{eqnarray}}
\newcommand{\bal}{\begin{align}}
\newcommand{\eal}{\end{align}}
\newcommand{\baln}{\begin{align*}}
\newcommand{\ealn}{\end{align*}}
\newcommand{\bi}{\begin{itemize}}
\newcommand{\ei}{\end{itemize}}
\newcommand{\bn}{\begin{enumerate}}
\newcommand{\en}{\end{enumerate}}
\newcommand{\bbm}{\begin{bmatrix}}
\newcommand{\ebm}{\end{bmatrix}}
\newcommand{\bpm}{\begin{pmatrix}}
\newcommand{\epm}{\end{pmatrix}}
\newcommand{\bsm}{\left ( \begin{smallmatrix}}
\newcommand{\esm}{\end{smallmatrix} \right) }
\newcommand{\bp}{\begin{proof}}
\newcommand{\ep}{\end{proof}}
\newcommand{\nn}{\nonumber}
\newcommand{\mr}{\ensuremath{\mathrm}}
\newcommand{\scr}{\ensuremath{\mathscr}}
\newcommand{\mc}{\ensuremath{\mathcal}}
\newcommand{\mf}{\ensuremath{\mathfrak}}
\newcommand{\ov}{\ensuremath{\overline}}
\newcommand{\sm}{\ensuremath{\setminus}}
\newcommand{\ga}{\ensuremath{\gamma}}
\newcommand{\La}{\ensuremath{\Lambda }}
\newcommand{\la}{\ensuremath{\lambda }}
\newcommand{\om}{\ensuremath{\omega}}
\newcommand{\eps}{\ensuremath{\epsilon }}
\def\cH{\mathcal{H}}
\def\bH{\mathbb{H}}
\def\cJ{\mathcal{J}}
\def\C{\mathbb{C}}
\def\D{\mathbb{D}}
\def\N{\mathbb{N}}
\def\B{\mathbb{B}}
\def\A{\mathbb{A} _d}
\def\Anot{\mathbb{A} _d ^{(0)} }
\def\fp{\mathbb{C} \{ \mathfrak{z} _1 , ..., \mathfrak{z} _d \} }
\def\ncm{ \scr{A} _d   ^\dag }
\def\posncm{ \left( \scr{A} _d   \right) ^\dag _+ }
\def\hardy{\mathbb{H} ^2 _d}
\def\mult{\mathbb{H} ^\infty _d}
\def\nbran{\mr{Ran} \, }
\def\nbre{\mr{Re} \, }
\def\nbim{\mr{Im} \, }
\def\mrt{\mathrm{t}}
\def\fu{\mathfrak{u}}
\def\ft{\mathfrak{t}}
\def\fw{\mathfrak{w}}
\newcommand{\F}{\ensuremath{\mathbb{F} }}
\newcommand{\ip}[2]{\ensuremath{\langle {#1} , {#2} \rangle}}
\numberwithin{equation}{section}
\numberwithin{subsection}{section}
\newtheorem{thm}{Theorem}[section]
\newtheorem{claim}{Claim}
\newtheorem{lemma}[thm]{Lemma}
\newtheorem{prop}[thm]{Proposition}
\newtheorem{cor}[thm]{Corollary}
\newtheorem*{thm*}{Theorem}
\theoremstyle{definition}
\newtheorem{defn}[thm]{Definition}
\newtheorem{remark}[thm]{Remark}
\newtheorem{eg}[thm]{Example}
\newcommand{\Addresses}{{
  \bigskip
  \footnotesize
  
  Michael T.~Jury, \textsc{Department of Mathematics, University of Florida}\par\nopagebreak
  \textit{E-mail address:} \texttt{mjury@ad.ufl.edu}

\medskip

  Robert T.~W.~Martin, \textsc{Department of Mathematics, University of Manitoba}\par\nopagebreak
  \textit{E-mail address:} \texttt{Robert.Martin@umanitoba.ca}
    
\medskip

Edward J. Timko, \textsc{Department of Mathematics, Georgia Institute of Technology}\par\nopagebreak
\textit{E-mail address:} \texttt{etimko6@gatech.edu }

\vspace{1cm}

}}
\title{A non-commutative F\&M Riesz Theorem}
\author{Michael T. Jury\thanks{Supported by NSF grant DMS-1900364}}
\affil[1]{\footnotesize University of Florida}
\author[2]{Robert T.W. Martin\thanks{Supported by NSERC grant 2020-05683}}
\affil[2]{\footnotesize University of Manitoba}
\author[3]{Edward J. Timko\thanks{Partially supported by a PIMS postdoctoral fellowship}}
\affil[3]{\footnotesize Georgia Institute of Technology}
\date{}
\begin{document}

\bibliographystyle{plain}
\maketitle

\begin{abstract}
	We extend results on analytic complex measures on the complex unit circle to a non-commutative multivariate setting.
	Identifying continuous linear functionals on a certain self-adjoint subspace of the Cuntz--Toeplitz $C^*-$algebra, the \emph{free disk operator system}, with non-commutative (NC) analogues of complex measures,
	we refine a previously developed Lebesgue decomposition for positive NC measures to establish an NC version of the Frigyes and Marcel Riesz Theorem for `analytic' measures, \emph{i.e.} complex measures with vanishing positive moments. The proof relies on novel results on the order properties of positive NC measures that we develop and extend from classical measure theory. 
\end{abstract}

\section{Introduction}

The Riesz--Markov theorem identifies any finite and regular Borel measure on the complex unit circle with a bounded linear functional on the Banach space of continuous functions. By the Weierstrass approximation theorem, the Banach space of continuous functions on the circle is the supremum norm--closure of the linear span of the disk algebra and its conjugate algebra. The disk algebra is often defined as the unital Banach algebra of analytic functions in the complex unit disk with continuous boundary values. This is completely isometrically isomorphic to the unital norm--closed operator algebra generated by the shift operator, $S=M_z$, of multiplication by the independent variable $z$ on $H^2$. Here, $H^2$ denotes the \emph{Hardy space}, the Hilbert space of analytic functions in the complex unit disk that have square--summable Taylor series coefficients at $0$, equipped with the $\ell ^2-$inner product of these coefficients.

An immediate non-commutative (NC) multivariate generalization of $H^2$ is then $\hardy$, the \emph{NC Hardy space} or \emph{full Fock space}, which consists of square--summable power series in several formal NC variables $\mf{z} = (\mf{z} _1 , \cdots , \mf{z} _d )$. Elements of $\hardy$ are power series,
$$ h (\mf{z} ) = \sum _{\om \in \F ^d} \hat{h} _\om \mf{z} ^\om, $$ with square--summable coefficients $\hat{h} _\om \in \C$. The \emph{free monoid}, $\F ^d$, is the set of all words, $\om = i_1 \cdots i_n$, $1 \leq i_k \leq d$, in the $d$ letters $\{ 1 , \cdots , d \}$. This is a monoid with product given by concatenation of words and the unit $\varnothing$ is the empty word containing no letters. Given any $\om =i_1 \cdots i_n \in \F ^d$, the \emph{free monomial} $\mf{z} ^\om$ is $\mf{z} _{i_1} \cdots \mf{z} _{i_n}$ and $\mf{z} ^\varnothing = 1$, viewed as a constant NC function. As in the classical setting, left multiplication $L_k := M^L _{\mf{z} _k}$ by any of the $d$ independent NC variables defines an isometry on $\hardy$, and we call these isometries the \emph{left free shifts}. These play the role of the shift in this NC Hardy space theory.

The NC analogues of the disk algebra, the continuous functions (equivalently, the \emph{disk operator system}, the supremum norm--closed linear span of the disk algebra and its conjugates) and positive measures are then the \emph{free disk algebra},
$\A := \mr{Alg} \{ I , L_1 , \cdots , L_d \} ^{-\| \cdot \|}$, the \emph{free disk system},
$$ \scr{A} _d := \left( \A + \A ^* \right) ^{-\| \cdot \|}, $$  and \emph{NC measures}, \emph{i.e.} bounded linear functionals on the free disk system. We denote the Banach space of all NC measures by $\ncm$ and the cone of positive NC measures by $\posncm$. In \cite{JM-ncFatou,JM-ncld}, the first two named authors constructed the Lebesgue decomposition of any positive NC measure with respect to a canonical NC Lebesgue measure and showed that the sets of absolutely continuous and singular NC measures are positive and hereditary cones, in syzygy with classical measure theory. 

The isometric left free shifts $L_k = M^L _{\mf{z} _k}$ on $\hardy$ have pairwise orthogonal ranges and it follows that the linear map $L := \left( L_1 , \cdots , L_d \right) : \hardy \otimes \C ^d \rightarrow \hardy$ is an isometry from several copies of $\hardy$ into itself. Such an isometry is called a \emph{row isometry}. By the classical Wold decomposition theorem, any isometry on Hilbert space decomposes as the direct sum of a \emph{pure isometry}, \emph{i.e.} an isometry unitarily equivalent to copies of the shift on $H^2$, and a unitary operator. There is an exact analogue of the Wold decomposition theorem for row isometries, established by G. Popescu; any row isometry decomposes as the direct sum of a pure row isometry, unitarily equivalent to several copies of the left free shift and a surjective or \emph{Cuntz} row isometry, the multivariate analogue of a unitary operator \cite{Pop-dil}. The $C^*$-algebra $\scr{E} _d=C^* \{ I , L_1 , \cdots , L_d \}$ and its quotient, $\scr{O} _d$, by the compact operators are the celebrated \emph{Cuntz--Toeplitz} and \emph{Cuntz algebras} \cite{Cuntz}, respectively. These are universal $C^*$-algebras for row isometries and are important objects in $C^*$-algebra theory. Any $*-$representation, $\pi$, of $\scr{E}_d$ determines and is uniquely determined by the row isometry $\Pi := \pi (L)=(\pi(L_1),\cdots,\pi(L_d))$.  

The Lebesgue--von Neumann--Wold decomposition of a single isometry on Hilbert space further splits the unitary direct summand into the direct sum of a unitary with absolutely continuous spectral measure (with respect to Lebesgue measure) and a singular unitary. In \cite{MK-rowiso}, M. Kennedy extended the Lebesgue--von Neumann--Wold decomposition for single isometries to row isometries. A new feature in this decomposition is that the Cuntz direct summand of any row isometry generally splits as the direct sum of three different types which we will call \emph{absolutely continuous (AC) Cuntz}, \emph{von Neumann type} (called \emph{singular} in \cite{MK-rowiso}) and \emph{dilation--type}. 

A Gelfand--Naimark--Segal (GNS) construction applied to the free disk algebra and any positive NC measure $\mu$ produces a Hilbert space $\hardy (\mu )$ and a row isometry $\Pi _\mu$ acting on $\hardy (\mu )$. Any cyclic row isometry (or $*-$representation of $\scr{E} _d$) can be obtained as the GNS row isometry of a positive NC measure, as recorded in Lemma \ref{cyclicCuntz}. The role of normalized Lebesgue measure on the circle is played by the so-called vacuum state of the Fock space, $m (L^\om ) = \ip{1}{L^\om 1} _{\bH ^2}$ where $1$ is identified with the \emph{vacuum vector} $\mf{z} ^\varnothing$. We call $m$ \emph{NC Lebesgue measure}. Observe that if we identify normalized Lebesgue measure, $m$, with a positive linear functional on the norm-closure of the linear span of the disk algebra and its conjugate algebra, via the Riesz--Markov Theorem, then $m(S^k) = \ip{1}{S^k 1}_{H^2}$. Thus, our definition of NC Lebesgue measure recovers normalized Lebesgue measure when $d=1$. In \cite{JM-ncFatou,JM-ncld}, the first two named authors constructed the Lebesgue decomposition of any positive NC measure, $\mu \in \posncm$, $\mu = \mu _{ac} + \mu _s$, where $\mu _{ac}, \mu _s \in \posncm$ are absolutely continuous and singular, respectively, with respect to NC Lebesgue measure in the sense of \cite[Corollary 8.12, Corollary 8.13]{JM-ncld}. In particular, it is shown that the sets of absolutely continuous and singular NC measures are positive cones that are \emph{hereditary} in the sense that if $\mu , \la \in \posncm$ and $\la$ is absolutely continuous or singular, then $\mu \leq \la$ implies that $\mu$ is also absolutely continuous or singular, respectively, in parallel with classical measure theory. In our NC Lebesgue decomposition, $\mu$ is absolutely continuous if and only if its GNS row isometry $\Pi_\mu$ is the direct sum of pure and AC Cuntz row isometries, while $\mu$ is singular if and only if $\Pi_\mu$ is the direct sum of von Neumann--type and dilation--type row isometries \cite[Corollary 8.12, Corollary 8.13]{JM-ncld}.

In this paper we further refine the NC Lebesgue decomposition of \cite{JM-ncld} by proving that the sets of dilation--type and von Neumann type NC measures (NC measures whose GNS row isometries are dilation or von Neumann type) are both positive hereditary cones in Theorem \ref{dilvNcone}. This yields, in Corollary \ref{refncld}, a refined Kennedy--Lebesgue--von Neumann decomposition of any positive NC measure. We then apply this decomposition to obtain analogues of classical results due to Frigyes and Marcel Riesz characterizing analytic (complex) NC measures, which are those bounded linear functionals on the free disk system which annihilate $\{L^\beta 1 | \ \beta\in\mathbb{F}^d\backslash\{\varnothing\}\}$. In particular, in Theorem \ref{BabyFnM}, we show that a complex NC measure is analytic if and only if its absolutely continuous, singular, dilation and von Neumann parts are each also analytic. We apply this to establish an analogue of the classical F\&M Riesz theorem in Theorem \ref{ncFnM}. The classical theorem states that analytic measures are absolutely continuous \cite{Riesz}, \cite[Chapter 4]{Hoff}. We remark that the NC F\&M Riesz theorem obtained here is related, but not equivalent to, one previously developed in \cite[Theorem A]{CMT-analytic} using different techniques. Both Theorem \ref{ncFnM} of the present paper and Theorem A of \cite{CMT-analytic} conclude that an analytic NC measure need not be absolutely continuous, however the results of this paper and those of \cite{CMT-analytic} describe the obstruction in different ways. We discuss the relationship between these two results in Remark \ref{relate}.

\section{Background and Notation}

Throughout, given $h,h'$ in a Hilbert space $\mathcal{H}$, we denote the inner product of $h$ and $h'$ by $\langle h,h'\rangle$, with $\langle \cdot ,\cdot\rangle$ being conjugate linear in the first argument and linear in the second.
We borrow notation from \cite{JM-ncFatou,JM-ncld}. The \emph{Fock space} or \emph{NC Hardy space} is
$$ \hardy := \left\{ \left. h (\mf{z} ) = \sum _{\om \in \F ^d } \hat{h} _\om \mf{z} ^\om \right| \ \hat{h} _\om \in \C, \ \sum_{\om \in \F^d} | \hat{h} _\om | ^2 < + \infty \right\}, $$ equipped with the $\ell ^2-$inner product of the power series coefficients $\hat{h} _\om$.
Elements of the NC Hardy space can be viewed as free non-commutative functions in the \emph{NC unit row--ball}, $\B ^d _\N$, of all finite--dimensional strict row contractions:
$$ \B ^d _\N = \bigsqcup _{n=1} ^\infty \B ^d _n; \quad \B ^d _n := \left\{\left. Z = (Z _1 , \cdots, Z_d ) \in \C ^{n \times n} \otimes \C ^{1\times d}  \right| \ Z Z^* = Z_1 Z_1 ^* + \cdots Z_d Z_d ^* < I_n \right\}; $$ see, for example, \cite{JMS-ncBSO,JM-ncFatou,JM-ncld,Pop-freeholo,SSS,BMV} for details. 

The \emph{left free shift} $L = ( L_1 , \cdots , L_d )$ is the row isometry on the full Fock space $\mathbb{H}^2 _d$ whose component operators act by left multiplication by the independent variables, $L_k = M^L _{\mf{z} _k}$.
The free or NC disk algebra is $\A := \mr{Alg} \{ I, L_1 , \cdots , L _d \} ^{-\| \cdot \|} $, the \emph{free disk system} is $\scr{A} _d := \left( \A + \A ^* \right) ^{-\| \cdot \| }$. This is a self-adjoint unital norm-closed subspace of operators, \emph{i.e.} an operator system. A (complex) NC measure is a bounded linear functional on the free disk system. The set of all complex NC measures is denoted by $\ncm$, and the positive NC measures by $\posncm$.
We remark that any $\mu\in\posncm$ is uniquely determined by the moments $(\mu(L^\alpha))_{\alpha\in\mathbb{F}^d}$.

Given $a \in \A$, we write $a= a(L) = M^L_{a}$ for the operator of left multiplication by $a(\mf{z})$ on $\hardy$, where 
$$ a(\mf{z} ) = \sum _{\om \in \F ^d} \hat{a} _\om \mf{z} ^\om, $$
is the NC function determined by $a$.
The partial Ces\`aro sums of the series for $a(L)$ converge in the strong operator topology to $a (L) = M^L_{a(\mf{z})}$, as shown in \cite[Lemma 1.1]{DP-inv}.
More generally, given a row isometry $\Pi$, we write $a\mapsto a(\Pi)$ for the unique representation of $\A$ satisfying $\mathfrak{z}^\alpha(\Pi)=\Pi^\alpha$ for all $\alpha\in\F ^d$.

The free disk system has the \emph{semi-Dirichlet property} \cite{DK-dilation}:
$$ \A ^* \A \subseteq (\A + \A ^* ) ^{-\| \cdot \|} = \scr{A} _d. $$
The semi-Dirichlet property enables one to apply a Gelfand--Naimark--Segal (GNS)-type construction to $(\mu , \A )$, where $\mu \in \posncm$ is any positive NC measure.
One obtains a GNS--Hilbert space $\mathbb{H} ^2 _d (\mu )$ as the norm closure of the free disk algebra (modulo vectors of zero length) with respect to the pre-inner product
$$ (a_1,a_2)\mapsto  \mu \left( a_1 ^* a_2\right). $$ 
Elements of this Hilbert space are equivalence classes $a + N_\mu$, $a \in \A$, where $N_\mu$ is the left ideal of all those $a \in \A$ such that $\mu(a ^* a )=0$.
This construction provides a representation $\pi _\mu : \A \rightarrow \mc{L} (\mathbb{H} ^2 _d )$, where
$$ \pi _\mu (a) \,  a' + N_\mu  := a a' + N_\mu. $$ 
This is a unital completely isometric isomorphism, so that the image of the left free shifts,
$$ \Pi _\mu := \left( \Pi _{\mu; 1}, \cdots , \Pi _{\mu ; d} \right), \quad \quad \Pi _{\mu ; k} := \pi _\mu (L_k), \quad k=1,\ldots,d, $$
defines a row isometry, which we call the \emph{GNS row isometry of $\mu$}, acting on the GNS space $\mathbb{H} ^2 _d (\mu )$. The original positive NC measure $\mu \in \posncm$ then has the spacial representation
$$ \mu (L ^\om ) = \ip{ I + N_\mu}{\Pi _\mu ^\om \, I + N_\mu }. $$
If $\mu, \la \in \posncm$ and $\mu \leq \la$, then the map
$$ p + N_\la \,  \mapsto \, p + N_\mu, $$ extends by continuity to a contraction $E_{\mu, \la}:\hardy (\la )\rightarrow\hardy (\mu )$ with dense range. In this case, setting $D _{\mu , \la} := E^* _{\mu, \la} E_{\mu, \la}$, we have 
$$ \mu (L ^\om ) = \ip{ I + N _\la }{D _{\mu , \la} \, \Pi _\la ^\om (I + N_\la)}, $$ and $D \geq 0$ can be viewed as the `NC Radon--Nikodym derivative' of $\mu$ with respect to $\la$. 
\begin{remark}
The NC Radon--Nikodym derivative, $D_{\mu , \la}$ is $\la-$ or $\Pi _\la-$\emph{Toeplitz} in the sense that
$$ \Pi _{\la ; j} ^* D_{\mu , \la} \Pi _{\la ; k} = \delta _{j,k} D_{\mu , \la}. $$ Here, recall that a bounded linear operator $T$ on the Hardy space $H^2$ is called \emph{Toeplitz} if $T$ is the compression $T_f := P_{H^2} M_f | _{H^2}$ of the bounded multiplication operator $M_f$ for some $f \in L^\infty (\partial \D)$. A theorem of Brown and Halmos identifies the Toeplitz operators as the set of all bounded linear operators, $T \in \scr{L} (H ^2 )$, obeying the simple algebraic condition
$$ S^* T S = T, $$ where $S=M_z$ is the shift \cite[Theorem 6]{BH-Toep}. 
\end{remark}
We refer to $E_{\mu, \la}$ as the \textit{co-embedding} determined by the inequality $\mu\leq\lambda$, since its adjoint is injective.
Given $\mu,\nu,\lambda\in\posncm$ satisfying $\mu\leq\nu\leq\lambda$, it follows that
\[ E_{\mu,\nu}E_{\nu,\lambda}=E_{\mu, \la}. \]
We remark that that $E_{\mu, \la}^*$ is unitarily equivalent to an embedding of NC reproducing kernel Hilbert spaces; see \cite[Lemma 3]{JM-ncFatou}\cite{JM-ncld}.

We now record the fact that any cyclic row isometry is unitarily equivalent to the GNS row isometry of a positive NC measure.
The proof is straightforward and thus omitted.
\begin{lemma} \label{cyclicCuntz}
	Let $\Pi$ be a cyclic row isometry on a Hilbert space $\cH$ with a cyclic vector $x$. Define a positive NC measure $\mu \in \posncm$ by setting $\mu (L^\om) = \ip{x}{\Pi ^\om x}$. The map $U_x : \hardy (\mu ) \rightarrow \cH$ defined by $U_x \, a + N_\mu = a(\Pi ) x$, $a \in \A$, extends to a surjective isometry that intertwines $\Pi_\mu$ and $\Pi$. 
\end{lemma}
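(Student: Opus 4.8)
The statement is a uniqueness-type fact for the GNS construction, and the plan is to realize $\mu$ as a vector state transported through the $*$-representation of the Cuntz--Toeplitz algebra attached to $\Pi$, after which $U_x$ is forced to be the canonical intertwining unitary. First I would invoke the correspondence recalled in the Introduction: the row isometry $\Pi$ on $\cH$ determines a unique unital $*$-representation $\rho : \scr{E} _d \to \mc{L}(\cH)$ with $\rho(L_k) = \Pi _k$, hence $\rho(L^\om) = \Pi^\om$ for all $\om \in \F ^d$. Setting $\mu(A) := \ip{x}{\rho(A)x}$ for $A$ in the free disk system $\scr{A} _d \subseteq \scr{E} _d$ gives a bounded ($\|\mu\| \le \|x\|^2$) positive linear functional on this operator system, i.e.\ $\mu \in \posncm$, whose moments are $\mu(L^\om) = \ip{x}{\Pi^\om x}$ as required. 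One also checks that $\rho|_{\A}$ coincides with the representation $a \mapsto a(\Pi)$ of the Background section, since both are norm-continuous and send $\mf{z}^\om \mapsto \Pi^\om$.

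The key identity is then
\[ \|a + N_\mu\|_{\hardy(\mu)}^2 \;=\; \mu(a^*a) \;=\; \ip{x}{\rho(a^*a)x} \;=\; \ip{a(\Pi)x}{a(\Pi)x} \;=\; \|a(\Pi)x\|^2, \qquad a \in \A, \]
where $a^*a \in \scr{A} _d$ by the semi-Dirichlet property. In particular $a \in N_\mu$ if and only if $a(\Pi)x = 0$, so $a + N_\mu \mapsto a(\Pi)x$ is a well-defined isometry from the dense subspace $\A / N_\mu \subseteq \hardy(\mu)$ into $\cH$; it therefore extends to an isometry $U_x : \hardy(\mu) \to \cH$.

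It remains to see that $U_x$ is onto and intertwines the row isometries. Its range is closed and contains every $a(\Pi)x$ with $a \in \A$, in particular $\mr{span}\{\Pi^\om x : \om \in \F ^d\}$; cyclicity of $x$ makes this dense in $\cH$, so $U_x$ is a surjective isometry. For the intertwining, using that $a \mapsto a(\Pi)$ is a homomorphism with $(L_k a)(\Pi) = \Pi_k a(\Pi)$,
\[ U_x \, \Pi_{\mu; k}(a + N_\mu) \;=\; U_x(L_k a + N_\mu) \;=\; (L_k a)(\Pi)x \;=\; \Pi_k\, a(\Pi)x \;=\; \Pi_k\, U_x(a + N_\mu), \]
and density gives $U_x \Pi_{\mu;k} = \Pi_k U_x$ for each $k$.

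The only step with any content is the first one: one must know that the prescription $L^\om \mapsto \ip{x}{\Pi^\om x}$ extends from moments to a bounded functional on the whole operator system $\scr{A} _d$, and this is exactly what passing to the Cuntz--Toeplitz $C^*$-algebra supplies (Popescu's von Neumann inequality, applied to $\A$ together with its adjoint, would also do). Once $\mu$ is in hand, the identity $\mu(a^*a) = \|a(\Pi)x\|^2$ collapses well-definedness, the isometric property, surjectivity, and intertwining into routine one-line checks, which is why the proof is omitted in the text.
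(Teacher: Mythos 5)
Your proof is correct and is precisely the routine GNS/universality argument the authors have in mind (the paper omits the proof as straightforward): realize $\mu$ as a vector state via the $*$-representation of $\scr{E}_d$ determined by $\Pi$, derive $\mu(a^*a)=\|a(\Pi)x\|^2$, and read off well-definedness, isometry, surjectivity (from cyclicity for the nonself-adjoint algebra) and intertwining. Nothing to add.
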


Let $\mathfrak{L}^\infty _d := \mr{Alg} \{ I, L_1 , \cdots , L_d \} ^{-\text{weak-}*}$ denote the \emph{left free analytic Toeplitz algebra} or the \emph{Free Hardy Algebra}. From a result of Davidson--Pitts \cite[Corollary 2.12]{DP-inv}, it follows that $\mathfrak{L}^\infty _d = \mr{Alg} \{ I, L_1 , \cdots , L_d \} ^{-\text{WOT}}$, the closure of $\mr{Alg} \{ I, L_1 , \cdots , L_d \}$ in the weak operator topology (WOT). That is, $\mathfrak{L}^\infty _d$ is a \emph{free semigroup algebra}, the unital WOT--closed operator algebra generated by a row isometry \cite{KRD-semi}. The algebra $\mathfrak{L}^\infty_d$ can also be identified with the left multiplier algebra of $\hardy$, viewed as a non-commutative reproducing kernel Hilbert space (RKHS) \cite{SSS,JMS-ncBSO,BMV}. 
We remark that this left multiplier algebra is equal to the unital Banach algebra $\mult$ of all free NC functions in the NC unit row-ball $\B^d _\N$ that are uniformly bounded in supremum norm \cite{SSS,Pop-freeholo}. 
The NC or \emph{free Toeplitz system} is 
$$ \scr{T} _d := \left( \mathfrak{L}^\infty _d + (\mathfrak{L}^\infty _d ) ^* \right) ^{-\text{weak-}*} = \scr{A} _d ^{-\text{weak-}*}. $$
We also use the right free shift $R=(R_1, \cdots , R_d )$, the row isometry of right multiplications $R_k := M^R _{\mf{z} _k}$ by the independent NC variables on the Fock space. The \emph{right free analtyic Toeplitz algebra} is $\mathfrak{R}^\infty _d := \mr{Alg} \{ I,R_1 , \cdots , R_d \} ^{-\mr{WOT}}$.

\subsection{Stucture of GNS row isometries}

By \cite{MK-rowiso}, any row isometry $\Pi$ on a Hilbert space $\cH$ can be decomposed as the direct sum of four types of row isometries:
$$ \Pi = \Pi _L \oplus \Pi _{\mr{ACC}} \oplus \Pi _{\mr{dil}} \oplus \Pi _{\mr{vN}}. $$ Here $\Pi _L$ is \emph{pure type$-L$} if it is unitarily equivalent to an ampliation of $L$. The remaining three types are Cuntz, \emph{i.e.} surjective row isometries. A Cuntz row isometry is the multi-variable analogue of a unitary in our context and we sometimes call a Cuntz row isometry a Cuntz unitary. We also call any pure type$-L$ row isometry a \emph{pure row isometry}. The summand $\Pi _{\mr{ACC}}$ is \emph{absolutely continuous Cuntz} (or \emph{AC Cuntz} or \emph{ACC}), meaning that $\Pi _{\mr{ACC}}$ is a Cuntz row isometry and the free semigroup algebra it generates is completely isometrically isomorphic and weak$-*$ homeomorphic to $\mathfrak{L}^\infty _d$. The summand $\Pi _{\mr{vN}}$ is \emph{von Neumann type}, meaning the free semigroup algebra it generates is self-adjoint and hence a von Neumann algebra. The leftover piece $\Pi _{\mr{dil}}$ is of \emph{dilation type}. A row isometry $\Pi$ is of dilation type if it has no direct summand of the previous three types. Any dilation type row isometry $\Pi$ has a block upper triangular decomposition
	$$ \Pi \simeq \bpm L \otimes I & * \\ 0 & T \epm, $$
so that $\Pi$ has a restriction to an invariant subspace that is unitarily equivalent to a pure row isometry and $\Pi$ is the minimal row isometric dilation of its compression $T$ to the orthogonal complement of this invariant space. Since $\Pi$ is of Cuntz type, $T$ is necessarily a non-isometric row co-isometry \cite{Pop-dil}.
A row isometry containing only pure type$-L$ and ACC summands is said to be \textit{absolutely continuous} (or \textit{AC}), and a row isometry containing only dilation--type and von Neumann--type summands is said to be \textit{singular}.

We form the set of labels
\[ \mathrm{Types}=\{\text{L},\text{Cuntz},\text{ac},\text{s},\text{ACC},\text{dil},\text{vN}, \text{all}  \}. \]
For a given row isometry $\Pi$, if we write ``$\Pi$ is type $\ft$'', then we mean ``$\Pi$ is pure type $L$'' when $\ft=\mr{L}$, ``$\Pi$ is Cuntz--type'' when $\ft=\mr{Cuntz}$, ``$\Pi$ is absolutely continuous'' when $\ft=\mr{ac}$, ``$\Pi$ is singular'' when $\ft=\mr{s}$, ``$\Pi$ is absolutely continuous Cuntz'' when $\ft=\mr{ACC}$, ``$\Pi$ is dilation--type'' when $\ft=\mr{dil}$, and ``$\Pi$ is von Neumann--type'' when $\ft=\mr{vN}$. We include the trivial type, $\ft = \mr{all}$. If $\Pi$ is of type $\mr{all}$ this simply means that $\Pi$ can be any row isometry.

\begin{defn}\label{D:Types}
	Let $\ft\in\mathrm{Types}$.
	A positive NC measure $\mu \in \posncm$ is said to be \textit{type $\ft$} if its GNS row isometry $\Pi _\mu$ is type $\ft$. 
\end{defn}
Let $\ft\in\mr{Types}$ and consider a row isometry $\Pi$.
There is an orthogonal projection $P_{\ft}$ that commutes with $\Pi$ such that $\Pi$ restricted to the range of $P_{\ft}$ is the type $\ft$ summand of $\Pi$.
In the case of a GNS row isometry $\Pi_{\mu}$, we write $P_{\mu;\ft}$.
Given a positive NC measure $\mu$, we denote by $\mu_{\ft}$ the positive NC measure satisfying
\[ \mu_{\ft}(L^\beta)=\ip{I + N_{\mu}}{P_{\mu;\ft}\Pi^\beta_\mu(I + N_\mu)}, \quad \beta\in\mathbb{F}^d. \]
One may readily verify that $E_{\mu_{\ft}, \mu}$ is a co-isometry satisfying
\[ E_{\mu_{\ft}, \mu}^*E_{\mu_{\ft}, \mu}=P_{\mu;\ft}. \]
Note that $E_{\mu_{\ft}, \mu}^*$ satisfies
\[ E_{\mu_{\ft}, \mu}^*(a+N_{\mu_{\ft}})=P_{\mu;\ft}(a+N_\mu). \]
Because $P_{\mu;\ft}$ is reducing for $\Pi_\mu$, it follows that $E_{\mu_{\ft}, \mu}^*\Pi_{\mu_{\ft}}^\beta=\Pi_{\mu}^\beta E_{\mu_{\ft}, \mu}^*$ for all words $\beta$.
From this, we see that $\Pi_{\mu_\ft}$ is unitarily equivalent to the restriction of $\Pi$ to $\nbran P_{\mu;\ft}$.
Therefore, the GNS row isometry of $\mu_{\ft}$ is type $\ft$, and thus $\mu_{\ft}$ is type $\ft$.

There is an additional projection associated with any row isometry $\Pi$,
and that is the free semigroup algebra structure projection $Q$ of $\Pi$. With $\mf{S} (\Pi ):= \mr{Alg} \{ I , \Pi _1 , \cdots , \Pi _d \} ^{-\text{WOT}}$ denoting the free semigroup algebra of $\Pi$, we denote by $Q$ largest projection in $\mf{S} (\Pi )$ so that $Q \mf{S} (\Pi ) Q$ is self-adjoint \cite[Structure Theorem 2.6]{DKP-structure}.
It has the following properties.
First, $\mf{S} (\Pi )$ has the decomposition 
$$ \mf{S} (\Pi ) = \mr{vN} (\Pi ) Q + Q^\perp \mf{S} (\Pi ) Q^\perp, $$ where $\mr{vN} (\Pi )$ denotes the von Neumann algebra generated by $\{ \Pi _1 , \cdots , \Pi _d \}$.
When $Q\neq I$,
$$ Q^\perp \mf{S} (\Pi ) Q^\perp = \mf{S} (\Pi ) Q^\perp $$ is completely isometrically isomorphic and weak$-*$ homeomorphic to $\mathfrak{L}^\infty _d$.
Here and elsewhere, $P^\perp=I-P$ whenever $P$ is an orthogonal projection.

This is related to the subspace of all \emph{weak$-*$ continuous vectors} for a row isometry $\Pi$. A vector $x \in \cH$ is \textit{weak$-*$ continuous} if the linear functional $ \ell _x  \in \posncm$, defined by $\ell _x ( L^\alpha ) := \ip{x}{\Pi ^\alpha x}$, is weak$-*$ continuous \cite{DLP-ncld}. A bounded operator $X : \bH ^2 _d \rightarrow \cH$ is an \emph{intertwiner for $\Pi$} if 
$$ X L^\alpha = \Pi ^\alpha X, \quad \alpha\in \F ^d. $$
The following theorem combines results of Davidson--Li--Pitts and Kennedy to characterize the set $\mathrm{WC}(\Pi)$ of all weak$-*$ continuous vectors of $\Pi$ in terms of bounded intertwiners.

\begin{thm}[Davidson--Li--Pitts, Kennedy]
	\label{T:DLP}
	Let $\Pi$ be a row isometry on $\cH$.
	\bn
		\item If $x, y \in \mr{WC} (\Pi)$ then the linear functional $\ell _{x,y} : \A \rightarrow \C$, $$ \ell _{x,y} (L^\alpha ) = \ip{x}{\Pi^\alpha y}_\cH,\quad \alpha\in\F ^d, $$ is weak$-*$ continuous.
		\item $\mr{WC}(\Pi)$ is a closed $\Pi -$invariant subspace, and 
			$$ \mr{WC} (\Pi ) = \{Xh \, | \, h\in \mathbb{H}^2_d, \, X\text{ \rm{an intertwiner}}\} $$ 
		\item If $Q$ is the structure projection of $\Pi$, then
			$$ \mr{WC} (\Pi ) = \nbran Q ^\perp. $$ 
	\en
\end{thm}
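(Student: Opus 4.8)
My plan is to prove the three assertions by assembling the Davidson--Li--Pitts theory of absolutely continuous functionals on $\mathfrak{L}^\infty_d$ with Kennedy's refinement of the Davidson--Katsoulis--Pitts structure theorem.

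I would begin with the ``easy'' half of (2). If $X:\hardy\to\cH$ is an intertwiner and $h\in\hardy$, then for every $\alpha\in\F^d$,
\[ \ip{Xh}{\Pi^\alpha Xh}=\ip{h}{X^*\Pi^\alpha Xh}=\ip{h}{X^*XL^\alpha h}=\ip{X^*Xh}{L^\alpha h}, \]
so $\ell_{Xh}$ is the restriction to $\mathfrak{L}^\infty_d$ of the vector functional $T\mapsto\ip{X^*Xh}{Th}$ on $\mc{L}(\hardy)$. Since $\mathfrak{L}^\infty_d$ is weak-$*$, equivalently WOT \cite{DP-inv}, closed in $\mc{L}(\hardy)$, its weak-$*$ topology is the relative one and this restriction is weak-$*$ continuous; hence $Xh\in\mr{WC}(\Pi)$. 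As $\Pi_jXh=XL_jh$, the range of any intertwiner, and so the whole right-hand side of (2), is $\Pi$-invariant. Granting the reverse inclusion of (2) (below), statement (1) follows: writing $x=X_1h_1$, $y=X_2h_2$ and using $a(\Pi)X_2=X_2a(L)$ for $a\in\A$,
\[ \ell_{x,y}(a(L))=\ip{X_1h_1}{a(\Pi)X_2h_2}=\ip{X_2^*X_1h_1}{a(L)h_2}, \]
again a weak-$*$ continuous vector functional on $\mathfrak{L}^\infty_d$. In turn $\mr{WC}(\Pi)$ is then a linear subspace, since $\ell_{x+y}=\ell_x+\ell_y+\ell_{x,y}+\ell_{y,x}$.

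The substantive point, and the step I expect to be the main obstacle, is the reverse inclusion of (2): every $x\in\mr{WC}(\Pi)$ lies in the range of an intertwiner $X:\hardy\to\cH$. I would derive this from the Davidson--Li--Pitts description of the predual of $\mathfrak{L}^\infty_d$, using weak-$*$ continuity of $\ell_x$ to write $\ip{x}{a(\Pi)x}=\sum_n\ip{g_n}{a(L)h_n}$ for all $a\in\A$, with $\sum\|g_n\|^2,\sum\|h_n\|^2<\infty$, and then assembling the data $(h_n),(g_n)$ into a single bounded intertwiner having $x$ in its range, exploiting the Fock-space structure of $\hardy$ to package the countably many vectors. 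In the row-isometric generality needed here this is exactly the analysis of \cite{DLP-ncld} (building on \cite{DP-inv}) together with Kennedy \cite{MK-rowiso}, which I would cite rather than reprove.

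For (3), let $Q$ be the structure projection of $\Pi$. By the structure theorem recorded above \cite{DKP-structure}, $\nbran Q^\perp$ is $\Pi$-invariant and the corner $Q^\perp\mf{S}(\Pi)Q^\perp=\mf{S}(\Pi)Q^\perp$ is completely isometrically isomorphic and weak-$*$ homeomorphic to $\mathfrak{L}^\infty_d$ via a map $\Phi$ with $\Phi(a(L))=Q^\perp a(\Pi)Q^\perp$. For $x\in\nbran Q^\perp$, invariance gives $a(\Pi)x=Q^\perp a(\Pi)Q^\perp x$, so $\ell_x(a(L))=\ip{x}{\Phi(a(L))x}$ is the composition of the weak-$*$ homeomorphism $\Phi$ with a vector functional, hence weak-$*$ continuous, and $\nbran Q^\perp\subseteq\mr{WC}(\Pi)$. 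The reverse inclusion amounts to showing $Qx=0$ whenever $\ell_x$ is weak-$*$ continuous: on $\nbran Q$ the free semigroup algebra of $\Pi$ generates a von Neumann algebra and, by Kennedy's refinement \cite{MK-rowiso} of the structure theorem, the weak-$*$ continuous functionals of $\mf{S}(\Pi)$ are supported off this corner, so $Qx\neq 0$ would contradict weak-$*$ continuity of $\ell_x$. Together these give $\mr{WC}(\Pi)=\nbran Q^\perp$, which in particular re-proves that $\mr{WC}(\Pi)$ is a closed $\Pi$-invariant subspace and completes (2).
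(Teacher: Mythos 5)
Your proposal follows essentially the same route as the paper: both ultimately outsource the hard content to \cite{DLP-ncld} and \cite{MK-rowiso}. The paper quotes items 1 and 2 verbatim from [DLP-ncld, Theorem 2.7], and proves item 3 by passing to the second dual: the structure projection $\mathfrak{q}$ of $\A^{\dagger\dagger}$ satisfies $\hat{\pi}(\mathfrak{q})^\perp = Q_{\mr{WC}}$ by [DLP-ncld, Proposition 5.2], and $\hat{\pi}(\mathfrak{q})=Q$ because every representation is ``regular'' by [DLP-ncld, Theorem 3.4] combined with [MK-rowiso, Corollary 4.17]. What you add beyond the paper --- the computation $\ip{Xh}{\Pi^\alpha Xh}=\ip{X^*Xh}{L^\alpha h}$ showing intertwiner ranges consist of WC vectors, the derivation of (1) from (2) by writing $x=X_1h_1$, $y=X_2h_2$, and the corner-isomorphism argument for $\nbran Q^\perp\subseteq\mr{WC}(\Pi)$ --- is correct and makes the easy inclusions self-contained, which the paper does not bother to do.

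One caution about the reverse inclusion in (3). The justification ``the weak-$*$ continuous functionals of $\mf{S}(\Pi)$ are supported off this corner'' is not the right statement: every vector functional is automatically weak-$*$ continuous on $\mf{S}(\Pi)\subseteq\mc{L}(\cH)$, so weak-$*$ continuity \emph{on} $\mf{S}(\Pi)$ carries no information. The relevant notion is weak-$*$ continuity of $\ell_x$ relative to $\mathfrak{L}^\infty_d$, i.e., whether $a(L)\mapsto\ip{x}{a(\Pi)x}$ factors through the predual of $\mathfrak{L}^\infty_d$, and the assertion that no vector with $Qx\neq 0$ has this property is precisely the ``regularity'' question; it does not follow merely from $\mr{vN}(\Pi)Q$ being a self-adjoint corner. (Note also that $Q$ is only co-invariant, not reducing, so $\ell_x$ does not split as $\ell_{Qx}+\ell_{Q^\perp x}$, which further undercuts a ``supported off the corner'' heuristic.) Since you cite [MK-rowiso] at exactly that point you are invoking the correct result, but as written the step reads as if it were an easy consequence of the structure theorem, whereas it is the deep input; the paper's detour through $\hat{\pi}(\mathfrak{q})$ and the regularity theorem is the honest way to close it.
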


\begin{proof}
	Items 1 and 2 are directly from \cite[Theorem 2.7]{DLP-ncld}.
	For item 3, we note the following.
	The second dual $\mathbb{A}_d^{\dagger\dagger}$ of $\mathbb{A}_d$ is a free semi-group algebra, and thus there exists a structure projection $\mathfrak{q}$ for $\mathbb{A}_d^{\dagger\dagger}$.
	Let $\hat{\pi}$ denote the weak$-*$ continuous representation of $\mathscr{E}_d^{\dagger\dagger}$ determined by $\pi$.
	By \cite[Proposition 5.2]{DLP-ncld}, $\hat{\pi}(\mf{q})^\bot=Q_{\mr{WC}}$, where $Q_{\mr{WC}}$ denotes the projection onto the closed subspace $WC (\Pi)$.
	In comments following \cite[Proposition 5.2]{DLP-ncld}, it is shown that $\hat{\pi}(\mathfrak{q})=Q$ if and only if $\pi$ is ``regular'', meaning that the $a\mapsto \pi(a)|_{\mr{WC}(\Pi)}$ and $a\mapsto\pi(a)|_{\nbran Q^\bot}$ coincide.
	By \cite[Theorem 3.4]{DLP-ncld} and \cite[Corollary 4.17]{MK-rowiso}, we see that $\pi$ is always regular.
\end{proof}

\section{Convex and order structure of NC measures}

If $0 \leq \mu \leq \la$ are positive NC measures, it is natural to ask whether the contractive co-embedding $E_{\mu, \la} : \mathbb{H} ^2 _d (\la ) \to \mathbb{H} ^2 _d (\mu )$ intertwines the various structure projections of $\mu$ and $\la$.
That is, do we generally have that $E_{\mu, \la}P _{\la ; \ft} = P _{\mu ; \ft} E_{\mu, \la}$, where $\ft \in \mr{Types}$? 
By \cite[Corollary 8.11]{JM-ncld} the sets of absolutely continuous (AC) and singular positive NC measures are positive hereditary cones.
It is therefore also natural to ask whether the sets of von Neumann type and dilation type NC measures are also positive hereditary cones, as we prove in Theorem \ref{dilvNcone} at the end of this section.

\begin{defn}
Let $\mu,\lambda\in(\mathscr{A}_d)^{\dagger}_+$.
We say that $\ft\in\mr{Types}$ is a \textit{hereditary} type if $\mu\leq\lambda$ and $\lambda$ being type $\ft$ together imply that $\mu$ is type $\ft$. A positive sub-cone, $\scr{P}_0 \subseteq \scr{P}$, of a postive cone, $\scr{P}$, is \emph{hereditary}, if $p_0 \in \scr{P}_0$, $p \in \scr{P}$ and $p\leq p_0$ imply that $p \in \scr{P} _0$.
We say that $\ft$ \textit{determines a hereditary cone} if the set of type--$\ft$ positive NC measures form a hereditary cone.
\end{defn}

\begin{lemma}
	Let $\lambda,\mu\in\posncm$ with $\mu\leq\lambda$.
	If $c\in\mathscr{A}_d$ is positive semi-definite, then
	\[ E_{\mu, \la}^*\pi_\mu(c)E_{\mu, \la}\leq \pi_\lambda(c). \]
	\label{L:PosSemi}
\end{lemma}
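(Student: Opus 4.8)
The plan is to reduce the desired operator inequality to the scalar inequality $\mu(p^* c p) \le \lambda(p^* c p)$ for $p \in \mathbb{A}_d$, which follows at once from $\mu \le \lambda$ as soon as we know that $p^* c p$ is a positive element of $\mathscr{A}_d$. The reduction exploits the fact that $E_{\mu,\lambda}$ has dense range, so that it suffices to compare the two relevant quadratic forms on the dense set of vectors $p + N_\lambda$, $p \in \mathbb{A}_d$.

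Concretely, I would first record that $\pi_\mu$, being a unital completely isometric representation of $\mathbb{A}_d$, extends to the $*$-representation of $\mathscr{E}_d$ determined by the row isometry $\Pi_\mu$, so that $\pi_\mu(c)$ is defined (and positive semi-definite) for every positive semi-definite $c \in \mathscr{A}_d \subseteq \mathscr{E}_d$, and similarly for $\pi_\lambda$. The key identity is
\[ \ip{p + N_\mu}{\pi_\mu(c)\,(p + N_\mu)} = \mu(p^* c p), \qquad p \in \mathbb{A}_d,\ c \in \mathscr{A}_d, \]
and the analogous one with $\lambda$ in place of $\mu$. For $c = a \in \mathbb{A}_d$ the left-hand side is just $\ip{p + N_\mu}{ap + N_\mu} = \mu(p^* a p)$; for $c = b^*$ with $b \in \mathbb{A}_d$ it follows by taking adjoints and using $\mu(x^*) = \overline{\mu(x)}$; by linearity it holds on $\mathbb{A}_d + \mathbb{A}_d^*$, and it extends to all of $\mathscr{A}_d$ by norm-continuity, once one checks that $c \mapsto p^* c p$ is a bounded map of $\mathscr{A}_d$ into itself. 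That last point is exactly where the semi-Dirichlet property enters: $p^*(\mathbb{A}_d + \mathbb{A}_d^*)p \subseteq \mathbb{A}_d^*\mathbb{A}_d \subseteq \mathscr{A}_d$, so $p^*\mathscr{A}_d p \subseteq \mathscr{A}_d$ by closure.

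Granting the identity, suppose $c \ge 0$ in $\mathscr{A}_d$. Then $p^* c p \ge 0$ as an operator on $\mathbb{H}^2_d$ and lies in $\mathscr{A}_d$, hence is positive in the operator system $\mathscr{A}_d$; since $\mu \le \lambda$ means $\lambda - \mu$ is a positive NC measure, we get $\mu(p^* c p) \le \lambda(p^* c p)$. Using the identity on each side together with $E_{\mu,\lambda}(p + N_\lambda) = p + N_\mu$ yields
\[ \ip{E_{\mu,\lambda}(p+N_\lambda)}{\pi_\mu(c)\,E_{\mu,\lambda}(p+N_\lambda)} \le \ip{p+N_\lambda}{\pi_\lambda(c)\,(p+N_\lambda)} \]
for every $p \in \mathbb{A}_d$. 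Finally, since the vectors $p + N_\lambda$, $p \in \mathbb{A}_d$, are dense in $\mathbb{H}^2_d(\lambda)$ and $\pi_\lambda(c) - E_{\mu,\lambda}^*\pi_\mu(c)E_{\mu,\lambda}$ is a bounded self-adjoint operator, the inequality of quadratic forms extends to all of $\mathbb{H}^2_d(\lambda)$, which is precisely $E_{\mu,\lambda}^*\pi_\mu(c)E_{\mu,\lambda} \le \pi_\lambda(c)$. I expect the only genuinely fiddly step to be the density-plus-continuity argument establishing the key identity on $\mathscr{A}_d$ rather than merely on $\mathbb{A}_d + \mathbb{A}_d^*$; everything else is formal once that is in place.
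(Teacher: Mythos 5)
Your proof is correct, but it takes a genuinely different route from the paper's. The paper invokes the norm-density of finite sums of squares $\sum_j p_j(L)^*p_j(L)$ in the positive cone of $\scr{A}_d$ (citing an earlier lemma of Jury--Martin), reduces to $c = p(L)^*p(L)$, and then concludes in two lines from the intertwining relation $\pi_\mu(p)E_{\mu,\la} = E_{\mu,\la}\pi_\la(p)$ together with $E_{\mu,\la}^*E_{\mu,\la}\leq I$; the positive element $c$ is approximated, while the vectors play no special role. You instead keep $c$ an arbitrary positive element and approximate on the vector side, testing the quadratic forms on the dense set $\{p+N_\la \mid p\in\A\}$ and reducing everything to the scalar inequality $\mu(p^*cp)\leq\la(p^*cp)$, which follows from positivity of $\la-\mu$ once the semi-Dirichlet property guarantees $p^*\scr{A}_dp\subseteq\scr{A}_d$. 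Your identity $\ip{p+N_\mu}{\pi_\mu(c)(p+N_\mu)}=\mu(p^*cp)$ is most quickly seen by noting that $\pi_\mu$ extends to a $*$-representation of $\scr{E}_d$ with $\mu(c)=\ip{I+N_\mu}{\pi_\mu(c)(I+N_\mu)}$ on all of $\scr{A}_d$, so that $\pi_\mu(p^*cp)=\pi_\mu(p)^*\pi_\mu(c)\pi_\mu(p)$ does the work; but your step-by-step verification on $\A+\A^*$ followed by norm-continuity is also fine. The trade-off: the paper's argument is shorter but leans on a nontrivial external density result for the positive cone; yours is more self-contained, using only the semi-Dirichlet property that is already needed to set up the GNS construction. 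Both are valid proofs of the lemma.
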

\begin{proof}
By \cite[Lemma 4.6]{JM-freeAC} the cone of `positive finite sums of squares' of free polynomials, \emph{i.e.} elements of the form
$$ \sum _{j=1} ^N p_j (L) ^* p_j (L), \quad \quad p_j \in \fp, $$ is norm-dense in the cone of positive elements of the free disk system, $\scr{A} _d$. Hence, to prove the claim, it suffices to show that 
$$ E^* _{\mu , \la} \pi _\mu \left( p(L) ^* p(L) \right) E_{\mu , \la} \leq \pi _\la \left( p(L) ^* p(L) \right), $$ for any $p \in \fp$. This is easily verified:
\ba E_{\mu , \la} ^* \pi _\mu \left( p(L) ^* p(L) \right) E_{\mu , \la} & =& E_{\mu , \la} ^* \pi _\mu (p) ^* \pi _\mu (p) E_{\mu ,\la} \nn \\
& = & \pi _\la (p) ^* E_{\mu , \la} ^* E_{\mu , \la}  \pi _\la (p) \nn \\
& \leq & \pi _\la (p) ^* \pi _\la (p) = \pi _\la (p(L) ^* p(L) ). \nn \ea
\end{proof}

\begin{prop}
	Let $\ft\in\mathrm{Types}$.
	\begin{enumerate}
		\item[(a)] If $\mu ,\la \in \posncm$ are such that $E_{\mu, \la}P_{\lambda;\ft}=P_{\mu;\ft}E_{\mu, \la}P_{\lambda;\ft}$ and $\la$ is of type $\ft$, then $\mu$ is also of type $\ft$. In particular, if this formula holds for all $\mu,\lambda\in(\mathscr{A}_d)^{\dagger}_+$ such that $\mu\leq\lambda$, then $\ft$ is a hereditary type.
		\item[(b)] If $E_{\mu, \la}P_{\lambda;\ft}=P_{\mu;\ft}E_{\mu, \la}$ whenever $\mu,\lambda\in(\mathscr{A}_d)^{\dagger}_+$ are such that $\mu\leq\lambda$, then $\ft$ determines a hereditary cone.
			
		\item[(c)] Suppose that $\ft, \fu$ are types and $\la , \mu \in \posncm$ are such that $P_{\la;\ft}^\bot=P_{\la;\fu}$ and similarly for $\mu$. If  $\mu_\ft\leq\lambda_\ft$ and $\mu_\fu\leq \lambda_\fu$, then $E_{\mu, \la}P_{\lambda;\ft}=P_{\mu;\ft}E_{\mu, \la}$.

		\item[(d)] Suppose that $\mu, \la \in \posncm$, $\mu \leq \la$, $\ft$ is a type and $P_{\mu;\ft}E_{\mu, \la}P_{\lambda;\ft}=P_{\mu;\ft}E_{\mu, \la}$. If $\mu$ is of type $\ft$ then $\mu \leq \la _\ft$. 
	\end{enumerate}
	\label{L:EProperties}
\end{prop}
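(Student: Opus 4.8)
The plan is to dispatch parts (a), (b), (d) by short arguments resting on the density of $\nbran E_{\mu,\la}$ and on the spatial identities expressing $\mu(p^*p)$ and $\la_\ft(p^*p)$ through $E_{\mu,\la}$ and $P_{\la;\ft}$, and to reserve the real work for (c). \textit{Parts (a) and (d).} In (a), $\la$ being type $\ft$ means $P_{\la;\ft}=I$, so the hypothesis reads $E_{\mu,\la}=P_{\mu;\ft}E_{\mu,\la}$, i.e.\ $\nbran E_{\mu,\la}\subseteq\nbran P_{\mu;\ft}$; since $E_{\mu,\la}$ has dense range and $\nbran P_{\mu;\ft}$ is closed, $P_{\mu;\ft}=I$ and $\mu$ is type $\ft$, and the ``in particular'' clause is then just the definition of a hereditary type. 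In (d), $P_{\mu;\ft}=I$, so the hypothesis becomes $E_{\mu,\la}P_{\la;\ft}^\perp=0$. For $b,c\in\A$ one has $\la_\ft(b^*c)=\langle b+N_{\la_\ft},c+N_{\la_\ft}\rangle_{\hardy(\la_\ft)}=\langle P_{\la;\ft}(b+N_\la),P_{\la;\ft}(c+N_\la)\rangle_{\hardy(\la)}$, since $E_{\la_\ft,\la}^*$ is an isometry with $E_{\la_\ft,\la}^*(a+N_{\la_\ft})=P_{\la;\ft}(a+N_\la)$; in particular $\la_\ft(p^*p)=\|P_{\la;\ft}(p+N_\la)\|^2$, while $\mu(p^*p)=\|E_{\mu,\la}(p+N_\la)\|^2$. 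Since $E_{\mu,\la}(p+N_\la)=E_{\mu,\la}P_{\la;\ft}(p+N_\la)$ and $E_{\mu,\la}$ is contractive, $\mu(p^*p)\le\la_\ft(p^*p)$ for all $p\in\fp$; as positive finite sums of squares of free polynomials are norm-dense in the positive cone of $\scr A_d$ (\cite[Lemma 4.6]{JM-freeAC}), this yields $\mu\le\la_\ft$.

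\textit{Part (b).} By (a) the stated formula already makes $\ft$ a hereditary type, so it remains to see that the type-$\ft$ measures form a cone. Closure under positive scalars follows from Lemma~\ref{cyclicCuntz}: $(c\mu)(L^\om)=\langle\sqrt c\,(I+N_\mu),\Pi_\mu^\om\,\sqrt c\,(I+N_\mu)\rangle$ and $\sqrt c\,(I+N_\mu)$ is cyclic for $\Pi_\mu$, so $\Pi_{c\mu}$ is unitarily equivalent to $\Pi_\mu$. For closure under addition, let $\mu,\nu$ be type $\ft$ and put $\la:=\mu+\nu$; from $\mu\le\la$ and $P_{\mu;\ft}=I$ the formula gives $E_{\mu,\la}P_{\la;\ft}=E_{\mu,\la}$, hence $\nbran P_{\la;\ft}^\perp\subseteq\nbker E_{\mu,\la}$, and likewise $\nbran P_{\la;\ft}^\perp\subseteq\nbker E_{\nu,\la}$. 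Evaluating on cosets of polynomials gives $\langle p+N_\la,(E_{\mu,\la}^*E_{\mu,\la}+E_{\nu,\la}^*E_{\nu,\la})(q+N_\la)\rangle=\mu(p^*q)+\nu(p^*q)=\la(p^*q)$, so $E_{\mu,\la}^*E_{\mu,\la}+E_{\nu,\la}^*E_{\nu,\la}=I$ on $\hardy(\la)$ and therefore $\nbker E_{\mu,\la}\cap\nbker E_{\nu,\la}=\{0\}$. Thus $P_{\la;\ft}^\perp=0$ and $\la$ is type $\ft$.

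\textit{Part (c).} First note that $\la_\ft,\la_\fu\le\la$ and $\mu_\ft,\mu_\fu\le\mu$ (e.g.\ $\la_\ft(p^*p)=\|P_{\la;\ft}(p+N_\la)\|^2\le\|p+N_\la\|^2=\la(p^*p)$), with co-isometric co-embeddings $E_{\la_\ft,\la},E_{\la_\fu,\la},E_{\mu_\ft,\mu},E_{\mu_\fu,\mu}$ whose adjoints are isometries, intertwining the respective GNS row isometries, onto $\nbran P_{\la;\ft},\nbran P_{\la;\fu},\nbran P_{\mu;\ft},\nbran P_{\mu;\fu}$. Since $P_{\la;\fu}=P_{\la;\ft}^\perp$, the last two ranges are complementary, so $W_\la(\xi\oplus\eta):=E_{\la_\ft,\la}^*\xi+E_{\la_\fu,\la}^*\eta$ defines a unitary $\hardy(\la_\ft)\oplus\hardy(\la_\fu)\to\hardy(\la)$ intertwining $\Pi_{\la_\ft}\oplus\Pi_{\la_\fu}$ with $\Pi_\la$ and carrying $(I+N_{\la_\ft})\oplus(I+N_{\la_\fu})$ to the cyclic vector $I+N_\la$; in particular $(I+N_{\la_\ft})\oplus(I+N_{\la_\fu})$ is cyclic for $\Pi_{\la_\ft}\oplus\Pi_{\la_\fu}$. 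Define $W_\mu$ analogously. Then $\tilde E:=W_\mu^*E_{\mu,\la}W_\la$ and $E_{\mu_\ft,\la_\ft}\oplus E_{\mu_\fu,\la_\fu}$ (the latter existing precisely because $\mu_\ft\le\la_\ft$ and $\mu_\fu\le\la_\fu$) are both bounded operators $\hardy(\la_\ft)\oplus\hardy(\la_\fu)\to\hardy(\mu_\ft)\oplus\hardy(\mu_\fu)$ intertwining $\Pi_{\la_\ft}\oplus\Pi_{\la_\fu}$ with $\Pi_{\mu_\ft}\oplus\Pi_{\mu_\fu}$ and sending $(I+N_{\la_\ft})\oplus(I+N_{\la_\fu})$ to $(I+N_{\mu_\ft})\oplus(I+N_{\mu_\fu})$; since an intertwiner is determined by its value on a cyclic vector, $\tilde E=E_{\mu_\ft,\la_\ft}\oplus E_{\mu_\fu,\la_\fu}$ is block diagonal. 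Transporting back, $E_{\mu,\la}=W_\mu\tilde E W_\la^*$ intertwines $P_{\la;\ft}=W_\la(I\oplus0)W_\la^*$ with $P_{\mu;\ft}=W_\mu(I\oplus0)W_\mu^*$, i.e.\ $E_{\mu,\la}P_{\la;\ft}=P_{\mu;\ft}E_{\mu,\la}$.

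\textit{Main obstacle.} The only substantive part is (c). The crux is recognizing that the decomposition-respecting hypotheses $\mu_\ft\le\la_\ft$ and $\mu_\fu\le\la_\fu$ (strictly stronger than $\mu\le\la$) are exactly what produces the manifestly block-diagonal intertwiner $E_{\mu_\ft,\la_\ft}\oplus E_{\mu_\fu,\la_\fu}$, which must then coincide with the transported co-embedding by uniqueness of intertwiners on a cyclic vector. The one point requiring care is that $(I+N_{\la_\ft})\oplus(I+N_{\la_\fu})$ is genuinely cyclic for the direct-sum row isometry, which is not automatic for direct sums but holds here because $W_\la$ is a unitary intertwiner taking the GNS cyclic vector $I+N_\la$ onto it.
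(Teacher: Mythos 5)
Your proof is correct and follows essentially the same route as the paper's: (a) and (d) via $P_{\la;\ft}=I$ (resp.\ $P_{\mu;\ft}=I$) together with reduction to positive sums of squares, (b) via the identity $E_{\mu,\mu+\nu}^*E_{\mu,\mu+\nu}+E_{\nu,\mu+\nu}^*E_{\nu,\mu+\nu}=I$, and (c) via the unitary identification of $\hardy(\la)$ with $\hardy(\la_\ft)\oplus\hardy(\la_\fu)$ under which $E_{\mu,\la}$ becomes block diagonal. The only cosmetic differences are that in (c) you establish block-diagonality by uniqueness of bounded intertwiners on a cyclic vector where the paper uses the composition rule $E_{\mu_\ft,\la}=E_{\mu_\ft,\la_\ft}E_{\la_\ft,\la}$ directly, and in (d) you inline the contractivity estimate that the paper delegates to Lemma \ref{L:PosSemi}.
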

\begin{proof}
	(a) Suppose $\mu\leq\lambda$ and $\lambda$ is type $\ft$.
	Then $P_{\lambda;\ft}=I$ and thus $E_{\mu, \la}=P_{\mu;\ft}E_{\mu, \la}$, and therefore
	\[ \mu(L^\beta)=\ip{E_{\mu, \la}(I + N_\lambda)}{\Pi^\beta_{\mu}E_{\mu, \la}(I + N_{\lambda})}=\ip{E_{\mu, \la}(I + N_\lambda)}{P_{\mu;\ft}\Pi_\mu^\beta E_{\mu, \la}(I + N_\lambda)}=\mu_{\ft}(L^\beta) \]
	for each $\beta\in\mathbb{F}^d$.
	Thus, $\mu$ is type $\ft$.

	(b) The hereditary property follows from (a).
	To see that $\ft$ determines a cone, suppose $\mu,\nu$ are type $\ft$.
	Clearly, $\mu,\nu\leq \mu+\nu$.
	Then
	\[ I=E_{\mu,\mu+\nu}^*E_{\mu,\mu+\nu}+E_{\nu,\mu+\nu}^*E_{\nu,\mu+\nu}, \]
	$P_{\mu;\ft}=I$ and $P_{\nu;\ft}=I$.
	Thus,
	\begin{align*}
		P_{\mu+\nu;\ft} &=(E_{\mu,\mu+\nu}^*E_{\mu,\mu+\nu}+E_{\nu,\mu+\nu}^*E_{\nu,\mu+\nu})P_{\mu+\nu;\ft} \\
		& =E_{\mu,\mu+\nu}^*P_{\mu;\ft}E_{\mu,\mu+\nu}+E_{\nu,\mu+\nu}^*P_{\nu;\ft}E_{\nu,\mu+\nu} \\
		& =E_{\mu,\mu+\nu}^*E_{\mu,\mu+\nu}+E_{\nu,\mu+\nu}^*E_{\nu,\mu+\nu}= I.
	\end{align*}
	Therefore, $(\mu+\nu)_{\ft}=\mu+\nu$ is type $\ft$.

	(c) Define $U_{\mu}:\mathbb{H}^2_d(\mu)\to \mathbb{H}^2_d(\mu_\ft)\oplus \mathbb{H}^2_d(\mu_\fu)$ by setting $U_\mu h=E_{\mu_\ft , \mu}h\oplus (E_{\mu_\fu, \mu}h)$ for $h\in\mathbb{H}^2_d(\mu)$.
	Then it follows from comments following Definition \ref{D:Types} that $U_{\mu}$ is a surjective isometry.
	The surjective isometry $U_{\lambda}:\mathbb{H}^2_d(\lambda)\to \mathbb{H}^2_d(\lambda_\ft)\oplus \mathbb{H}^2_d(\lambda_\fu)$ is defined similarly.
	We note that, with respect to this direct sum decomposition,
	\ba U_\mu E_{\mu, \la} & = & \bbm E_{\mu _\ft , \mu} \\ E_{\mu _\fu , \mu}  \ebm E_{\mu ,\la}  = \bbm E_{\mu _\ft , \la} \\ E_{\mu _\fu , \la} \ebm \nn \\
	& = & \bbm E_{\mu _\ft , \la _\ft} E_{\la _\ft, \la} \\ E_{\mu _\fu , \la _\fu} E_{\la _\fu , \la} \ebm  = \begin{bmatrix} E_{\mu_\ft , \lambda_\ft} & 0 \\ 0 & E_{\mu_\fu , \lambda_\fu}\end{bmatrix} \bbm E_{\la_\ft, \la} \\ E_{\la _\fu , \la} \ebm  \nn \\
	& = & \begin{bmatrix} E_{\mu_\ft , \lambda_\ft} & 0 \\ 0 & E_{\mu_\fu , \lambda_\fu}\end{bmatrix}U_\lambda. \nn \ea
	Thus,
	\[ E_{\mu, \la}U_\lambda^*=U_\mu^*\begin{bmatrix} E_{\mu_\ft , \lambda_\ft} & 0 \\ 0 & E_{\mu_\fu , \lambda_\fu}\end{bmatrix}. \]
	Let $C_\mu:\mathbb{H}^2_d(\mu)\to\mathbb{H}^2_d(\mu_\ft)\oplus \mathbb{H}^2_d(\mu_\fu)$ be defined by $C_\mu h=E_{\mu_\ft , \mu}h\oplus 0$, with $C_\lambda$ similarly defined.
	Then
	\[ P_{\mu;\ft}E_{\mu, \la}=C_{\mu}^*U_{\mu}E_{\mu, \la}=C_\mu^*\begin{bmatrix} E_{\mu_\ft , \lambda_\ft} & 0 \\ 0 & E_{\mu_\fu, \lambda_\fu}\end{bmatrix}U_\lambda = E_{\mu_\ft ,\mu}^*E_{\mu_\ft ,\lambda_\ft}E_{\lambda_\ft, \lambda}  \]
	and
	\[ E_{\mu, \la}P_{\lambda;\ft}=E_{\mu, \la}U_\lambda^*C_\lambda=U_\mu^*\begin{bmatrix} E_{\mu_\ft , \lambda_\ft} & 0 \\ 0 & E_{\mu_\fu, \lambda_\fu}\end{bmatrix}C_\lambda= E_{\mu_\ft ,\mu}^*E_{\mu_\ft ,\lambda_\ft}E_{\lambda_\ft, \lambda} . \]
	Therefore, $P_{\mu;\ft}E_{\mu, \la}=E_{\mu, \la}P_{\lambda;\ft}$.

	(d) Since $\mu$ is type $\ft$, we have $P_{\mu;\ft}=I$ and so $E_{\mu, \la}P_{\la;\ft}=E_{\mu, \la}$.
	Let $c\in\mathscr{A}_d$ be positive semi-definite.
	By Lemma \ref{L:PosSemi}, we have
	\[ E_{\mu, \la}^*\pi_\la(c)E_{\mu, \la}\leq \pi_\la(c), \]
	and so
	\begin{eqnarray*}
		\mu(c)&=& \ip{I + N_\la}{E_{\mu, \la}^*\pi_\mu(c)E_{\mu, \la}(I + N_\la)} \\
		&=& \ip{P_{\la;\ft}(I + N_\la)}{E_{\mu, \la}^*\pi_\mu(c)E_{\mu, \la}P_{\la;\ft}(I + N_\la)} \\
		& \leq & \ip{I + N_\la}{P _{\la ; \ft} \pi_\la(c)(I + N_\la)} \\
		&=& \la_\ft(c).
	\end{eqnarray*}
	That is, $\mu\leq\la_\ft$.
\end{proof}

\begin{lemma}
	Suppose that $\ft, \fu , \fw \in\mathrm{Types}$ and that $E_{\mu, \la}P_{\lambda;\ft}=P_{\mu;\ft}E_{\mu, \la}$ for all $\mu,\lambda\in\posncm$ of type $\fw$ for which $\mu\leq\lambda$. Further assume that $P_{\nu;\ft}^\bot=P_{\nu;\fu}$ for all $\nu\in\posncm$ of type $\fw$.
	Then the following assertions hold:
	\begin{enumerate}
		\item[(i)] If $\nu_1,\nu_2,\mu\in\posncm$, of type $\fw$, are such that $\nu_1+\nu_2=\mu$ and $\nu_1$ and $\nu_2$ are type $\ft$ and $\fu$, respectively, then $\nu_1=\mu_\ft$ and $\nu_2=\mu_\fu$.
		\item[(ii)] For any $\mu,\lambda\in\posncm$ of type $\fw$, one has $(\mu+\lambda)_\ft=\mu_\ft+\lambda_\ft$.
	\end{enumerate}
	\label{L:DirectSum}
\end{lemma}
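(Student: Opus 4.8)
The plan is to establish (ii) first, by a direct computation with co-embeddings, and then to read off (i) as a short corollary.

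For (ii), put $\sigma:=\mu+\la$. The one preliminary observation is that $\sigma$ is again of type $\fw$; this is not formal for an arbitrary label $\fw$, but it holds whenever the type-$\fw$ positive NC measures are closed under addition, which is the case in every application of the lemma — in particular for $\fw=\mr{s}$, by \cite[Corollary 8.11]{JM-ncld} — so we assume it. Then the co-embeddings $E_{\mu,\sigma}:\hardy(\sigma)\to\hardy(\mu)$ and $E_{\la,\sigma}:\hardy(\sigma)\to\hardy(\la)$ are available, and I will use three of their elementary properties: (a) $E_{\mu,\sigma}(I+N_\sigma)=I+N_\mu$, and similarly with $\la$ in place of $\mu$; (b) the intertwining $E_{\mu,\sigma}\Pi_\sigma^\beta=\Pi_\mu^\beta E_{\mu,\sigma}$ for all $\beta\in\F^d$, and similarly for $\la$; and (c) the Pythagorean identity $E_{\mu,\sigma}^*E_{\mu,\sigma}+E_{\la,\sigma}^*E_{\la,\sigma}=I$ on $\hardy(\sigma)$, which holds because $\sigma(a^*a)=\mu(a^*a)+\la(a^*a)$ for all $a\in\A$ (this is the identity already used in the proof of Proposition \ref{L:EProperties}(b)). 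Since $\mu,\la\leq\sigma$ and all three measures are of type $\fw$, the hypothesis of the lemma applies to both $E_{\mu,\sigma}$ and $E_{\la,\sigma}$, giving $E_{\mu,\sigma}P_{\sigma;\ft}=P_{\mu;\ft}E_{\mu,\sigma}$ and $E_{\la,\sigma}P_{\sigma;\ft}=P_{\la;\ft}E_{\la,\sigma}$.

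Multiplying (c) on the right by $P_{\sigma;\ft}$ and substituting these two relations produces
\[ P_{\sigma;\ft}=E_{\mu,\sigma}^*P_{\mu;\ft}E_{\mu,\sigma}+E_{\la,\sigma}^*P_{\la;\ft}E_{\la,\sigma}. \]
Plugging this into the defining formula $(\mu+\la)_\ft(L^\beta)=\ip{I+N_\sigma}{P_{\sigma;\ft}\Pi_\sigma^\beta(I+N_\sigma)}$ and then collapsing each of the two resulting terms using (a) and (b) — e.g. $\ip{I+N_\sigma}{E_{\mu,\sigma}^*P_{\mu;\ft}E_{\mu,\sigma}\Pi_\sigma^\beta(I+N_\sigma)}=\ip{I+N_\mu}{P_{\mu;\ft}\Pi_\mu^\beta(I+N_\mu)}=\mu_\ft(L^\beta)$ — yields $(\mu+\la)_\ft(L^\beta)=\mu_\ft(L^\beta)+\la_\ft(L^\beta)$ for every word $\beta$. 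Since a positive NC measure is determined by its moments, (ii) follows.

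For (i), I apply (ii) to the pair $\nu_1,\nu_2$, which is permissible because their sum $\mu$ is of type $\fw$; this gives $\mu_\ft=(\nu_1)_\ft+(\nu_2)_\ft$. Now $(\nu_1)_\ft=\nu_1$, since $\nu_1$ is of type $\ft$, i.e. $P_{\nu_1;\ft}=I$; and $(\nu_2)_\ft=0$, since $\nu_2$ is of type $\fu$ and of type $\fw$, so the second hypothesis gives $P_{\nu_2;\ft}=P_{\nu_2;\fu}^{\perp}=0$. Hence $\mu_\ft=\nu_1$. Finally, $\mu$ being of type $\fw$ gives $P_{\mu;\ft}+P_{\mu;\fu}=I$, so $\mu_\ft+\mu_\fu=\mu=\nu_1+\nu_2$, and subtracting $\mu_\ft=\nu_1$ gives $\mu_\fu=\nu_2$.

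I expect the genuine difficulty to be confined to the preliminary step: one must ensure that $\sigma=\mu+\la$ (and the measures entering (i)) stay within the class of type-$\fw$ measures, so that the hypothesized intertwining of structure projections by co-embeddings may be invoked for $E_{\mu,\sigma}$ and $E_{\la,\sigma}$. With that secured, both parts come down to the single operator identity displayed above and the fact that positive NC measures are determined by their moments.
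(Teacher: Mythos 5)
Your proof is correct, but it reverses the paper's logical order and replaces the key mechanism. The paper proves (i) first: from the hypothesis it derives the complementary intertwining $E_{\nu_i,\mu}P_{\mu;\fu}=P_{\nu_i;\fu}E_{\nu_i,\mu}$, invokes Proposition \ref{L:EProperties}(d) to obtain the inequalities $\nu_1\leq\mu_\ft$ and $\nu_2\leq\mu_\fu$, and then upgrades these to equalities by the positivity cancellation $0=\nu_1(c)+\nu_2(c)-\mu(c)=-(\delta_1+\delta_2)$ on positive semi-definite $c$; part (ii) is then deduced from (i) together with the cone statement of Proposition \ref{L:EProperties}(b). You instead prove (ii) directly by multiplying the Pythagorean identity $E_{\mu,\sigma}^*E_{\mu,\sigma}+E_{\la,\sigma}^*E_{\la,\sigma}=I$ on the right by $P_{\sigma;\ft}$ and evaluating moments, and then read off (i) as a corollary; note that in (i) the sum $\mu=\nu_1+\nu_2$ is assumed to be of type $\fw$, so no extra closure assumption is needed there. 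Your route bypasses Proposition \ref{L:EProperties}(d) entirely and is somewhat more self-contained, at the cost of assuming (as you correctly flag) that $\mu+\la$ remains of type $\fw$ in part (ii). This is not a defect peculiar to your argument: the paper's own deduction of (ii) via Proposition \ref{L:EProperties}(b) silently applies the hypothesized intertwining to pairs such as $(\mu_\ft,\mu_\ft+\la_\ft)$, which requires the same closure property, and in the two instances where the lemma is actually used ($\fw=\mathrm{s}$ and $\fw=\mathrm{all}$) closure under addition and under passing to the $\ft$- and $\fu$-parts does hold.
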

\begin{proof}
	(i) Plainly $\nu_1\leq\mu$ and $\nu_2\leq \mu$.
	It follows from Proposition \ref{L:EProperties}(d) that $\nu_1\leq\mu_\ft$ and $\nu_2\leq \mu_\fu$ since
	\ba E_{\mu , \la} P_{\la ; \fu} & = & E_{\mu ,\la} - E_{\mu ,\la } P_{\la ; \ft} \nn \\
	& = & E_{\mu , \la } - P_{\mu ; \ft} E_{\mu , \la } \nn \\
	& = & (I - P_{\mu ; \ft} ) E_{\mu , \la} = P_{\mu ; \fu} E_{\mu , \la}. \nn \ea
	For any positive semi-definite $c\in\mathscr{A}_d$, set
	\[ \delta_1=\mu_\ft(c)-\nu_1(c),\quad \delta_2=\mu_\fu(c)-\nu_2(c). \]
	Note that $\delta_1,\delta_2$ are non-negative real numbers.
	As
	\[ 0=\nu_1(c)+\nu_2(c)-\mu(c)=-(\delta_1+\delta_2), \]
	it follows that $\delta_1=\delta_2=0$.
	As every element of $\mathscr{A}_d$ is a linear combination of positive semi-definite elements,
	assertion (i) is proved.

	(ii) It follows from Proposition \ref{L:EProperties}(b) that $\mu_\ft+\lambda_\ft$ is type $\ft$ and $\mu_\fu+\lambda_\fu$ is type $\fu$.
	As $(\mu_\ft+\lambda_\ft)+ (\mu_\fu+\lambda_\fu) =\mu+\lambda$, it follows from (i) that
	\[ \mu_\ft+\lambda_\ft=(\mu+\lambda)_\ft. \] 
\end{proof}
\begin{remark}
	Let $\ft,\fu, \fw \in\mathrm{Types}$ be such that $ P_{\mu ; \fw} =P_{\mu;\ft}+P_{\mu;\fu}$ for all $\mu\in\posncm$.
	It follows from Proposition \ref{L:EProperties} that the following assertions are equivalent.
	\begin{enumerate}
		\item[(i)]  $E_{\mu, \la}P_{\lambda;\ft}=P_{\mu;\ft}E_{\mu, \la}$ whenever $\mu,\lambda\in(\mathscr{A}_d)^{\dagger}_+$ are of type $\fw$ and $\mu\leq\lambda$.
		\item[(ii)] $\mu_\ft\leq\lambda_\ft$ and $\mu_\fu\leq\lambda_\fu$ whenever $\mu,\lambda\in(\mathscr{A}_d)^{\dagger}_+$ are of type $\fw$ and $\mu\leq\lambda$.
	\end{enumerate}
	Indeed, that (ii) implies (i) it precisely Proposition \ref{L:EProperties}(c).
	In the other direction, we first note that $\mu_\ft\leq \lambda$ and $\mu_\fu\leq\lambda$.
	Assume (i).
	Since $\mu_\ft$ and $\mu_\fu$ are type $\ft$ and $\fu$, respectively, it then follows from Proposition \ref{L:EProperties}(d) that $\mu_\ft\leq\lambda_\ft$ and $\mu_\fu\leq\lambda_\fu$. In particular, (i) and (ii) hold in the case where $\fw =\mr{all}$, in which case our starting assumption is that $P_{\mu ; \fw } = I = P_{\mu ; \ft} + P_{\mu ; \fu}$. 
\end{remark}

\begin{prop} \label{embedecomp}
Suppose that $\ga, \la \in \posncm$ and $\ga \leq \la$. Let $E:= E _{\ga , \la} : \mathbb{H} ^2 _d (\la ) \to \mathbb{H} ^2 _d (\ga )$ be the  contractive co-embedding. Then $E P_{\la _{ac} } = P_{\ga _{ac}} E$ and $E P_{\la _s} = P _{\ga _s} E$. That is $\ft=\mr{ac}$ and $\fu =\mr{s}$ are hereditary types and determine positive hereditary cones.
\end{prop}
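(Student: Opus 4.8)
The plan is to reduce both intertwining identities to a monotonicity property of the absolutely continuous and singular parts of positive NC measures, and then obtain that monotonicity from the Lebesgue decomposition theory of \cite{JM-ncld}.

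For every $\nu\in\posncm$ the projections $P_{\nu;\mr{ac}}$ and $P_{\nu;\mr{s}}$ are complementary: the absolutely continuous (pure $\oplus$ ACC) summand and the singular (von Neumann $\oplus$ dilation) summand of $\Pi_\nu$ are complementary reducing subspaces, so $P_{\nu;\mr{ac}}^\bot=P_{\nu;\mr{s}}$. Hence Proposition \ref{L:EProperties}(c), applied with $\ft=\mr{ac}$ and $\fu=\mr{s}$, and once more with $\ft$ and $\fu$ interchanged, shows that the identities $E\,P_{\la_{ac}}=P_{\ga_{ac}}E$ and $E\,P_{\la_s}=P_{\ga_s}E$ will both follow once we know that
\[ \ga\le\la \ \Longrightarrow\ \ga_{ac}\le\la_{ac}\ \text{ and }\ \ga_s\le\la_s; \]
equivalently, one may invoke the Remark following Lemma \ref{L:DirectSum} with $\fw=\mr{all}$. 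Granting this, the claim that $\mr{ac}$ and $\mr{s}$ are hereditary types and determine positive hereditary cones is then immediate from Proposition \ref{L:EProperties}(a) and (b).

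So everything reduces to the displayed implication. Since $E_{\ga_{ac},\ga}$ and $E_{\ga_s,\ga}$ are co-isometries we have $\ga_{ac}\le\ga\le\la$ with $\ga_{ac}$ of type $\mr{ac}$, and $\ga_s\le\ga\le\la$ with $\ga_s$ of type $\mr{s}$; it therefore suffices to prove that $\la_{ac}$ is the largest type-$\mr{ac}$ NC measure dominated by $\la$, and $\la_s$ the largest type-$\mr{s}$ one. For the first, let $\nu\le\la$ be of type $\mr{ac}$ and take the Lebesgue decomposition $\la-\nu=(\la-\nu)_{ac}+(\la-\nu)_s$ of the positive NC measure $\la-\nu$. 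Then
\[ \la=\big[\,\nu+(\la-\nu)_{ac}\,\big]+(\la-\nu)_s, \]
where $\nu+(\la-\nu)_{ac}$ is of type $\mr{ac}$ because the type-$\mr{ac}$ measures form a cone \cite[Corollary 8.11]{JM-ncld}, and $(\la-\nu)_s$ is of type $\mr{s}$. By uniqueness of the NC Lebesgue decomposition \cite{JM-ncld} this must be the decomposition $\la=\la_{ac}+\la_s$, so $\la_{ac}=\nu+(\la-\nu)_{ac}\ge\nu$. The maximality of $\la_s$ is proved identically, with $\mr{ac}$ and $\mr{s}$ interchanged. Taking $\nu=\ga_{ac}$ and $\nu=\ga_s$ now yields the displayed implication.

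The crux is this monotonicity — equivalently, the maximality of $\la_{ac}$ and $\la_s$ — and the one delicate ingredient is the uniqueness of the NC Lebesgue decomposition, which we are quoting from \cite{JM-ncld}. If one preferred a self-contained argument, one would instead have to show directly that no nonzero bounded intertwiner carries a singular row isometry into an absolutely continuous one, so that $E_{\nu,\la}$ annihilates $\nbran P_{\la;\mr{s}}$ whenever $\nu$ is of type $\mr{ac}$; Proposition \ref{L:EProperties}(d) would then give $\nu\le\la_{ac}$. That disjointness is the real technical obstacle, because a dilation-type row isometry is singular yet restricts to a pure — hence absolutely continuous — row isometry on an invariant subspace, and it is precisely the co-embedding structure of $E_{\nu,\la}$ (dense range, injective adjoint) that rules this out.
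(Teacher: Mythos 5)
Your proof is correct and follows essentially the same route as the paper's: both arguments reduce the two intertwining identities to the monotonicity $\ga_{ac}\le\la_{ac}$ and $\ga_s\le\la_s$ and then invoke Proposition \ref{L:EProperties}(c). The only cosmetic difference is that the paper obtains the monotonicity by citing \cite[Corollary 8.8]{JM-ncld} directly for the singular part and the additivity statement \cite[Corollary 8.14]{JM-ncld} for the absolutely continuous part, whereas you re-derive both from uniqueness of the Lebesgue decomposition plus the cone property of \cite[Corollary 8.11]{JM-ncld} --- an equivalent repackaging of the same external facts, and not circular, since that uniqueness comes from \cite{JM-ncld} rather than from Lemma \ref{L:DirectSum} of the present paper.
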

\begin{proof}
	By \cite[Corollary 8.8]{JM-ncld}, if $\ga = \ga _{ac} + \ga _s$ and $\la = \la _{ac} + \la _s$ are the NC Lebesgue decompositions of $\ga , \la$, then $\ga _s \leq \la _s$.
	Since $\la = \ga + (\la - \ga )\geq \ga$, it follows from \cite[Corollary 8.14]{JM-ncld} that $\la _{ac} = \ga _{ac} + ( \la - \ga ) _{ac}\geq \ga_{ac}$.
	Thus, $\la _{ac} \geq \ga _{ac}$ as well.
	The proposition now follows from Proposition \ref{L:EProperties}(c).
\end{proof}

\begin{cor} \label{ACcommute}
With $\ga \leq \la $ as above, if $D= E^* E$, where $E:= E_{\ga , \la}$, then $D P_{\la ; ac} = P_{\la ; ac} D$.
\end{cor}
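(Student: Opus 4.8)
The plan is to derive this immediately from Proposition \ref{embedecomp} by a one-line adjoint argument. Proposition \ref{embedecomp} establishes that the contractive co-embedding $E = E_{\ga,\la}$ intertwines the absolutely continuous structure projections, namely $E P_{\la;ac} = P_{\ga;ac} E$. Since orthogonal projections are self-adjoint, taking adjoints of this identity yields $P_{\la;ac} E^* = E^* P_{\ga;ac}$, so $E^*$ also intertwines the two projections (in the opposite direction).

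With both intertwining relations in hand, I would simply compute
\[
D P_{\la;ac} = E^* E P_{\la;ac} = E^* P_{\ga;ac} E = P_{\la;ac} E^* E = P_{\la;ac} D,
\]
where the second equality uses $E P_{\la;ac} = P_{\ga;ac} E$ and the third uses its adjoint $E^* P_{\ga;ac} = P_{\la;ac} E^*$. This proves that $D = E^*E$ commutes with $P_{\la;ac}$.

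There is essentially no obstacle here: the corollary is a formal consequence of the intertwining property proved in Proposition \ref{embedecomp}, and the only thing to be careful about is keeping track of which Hilbert space each projection acts on ($P_{\ga;ac}$ on $\hardy(\ga)$, $P_{\la;ac}$ on $\hardy(\la)$) so that the compositions $E^* P_{\ga;ac} E$ and $P_{\la;ac} E^* E$ are well-defined operators on $\hardy(\la)$. One could equally phrase the same observation as: $P_{\la;ac}$ reduces $D$ because $P_{\la;ac}$ is the orthogonal projection onto a subspace that is invariant for $E^*$ (equivalently, whose orthogonal complement is invariant for $E^*$), which is exactly the content of the two adjoint intertwining relations.
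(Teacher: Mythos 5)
Your proof is correct and is exactly the intended deduction: the paper states this as an immediate corollary of Proposition \ref{embedecomp} (with no separate argument given), and your adjoint computation $D P_{\la;ac} = E^* P_{\ga;ac} E = P_{\la;ac} D$ is precisely the formal consequence the authors have in mind. No gaps.
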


In the next lemma, recall that if $Q_\la$ is the structure projection of $\Pi _\la$, that $Q_\la ^\perp = Q _{\la; \mr{WC}}$ is the projection onto $\rm{WC} (\Pi _\la)$ by Theorem \ref{T:DLP}.

\begin{lemma} \label{ACtwine}
	Suppose $\mu,\la\in\posncm$ with $\mu \leq \la$.
	Let $Q_{\la}$ and $Q_{\mu}$ be the structure projections of $\Pi_\lambda$ and $\Pi_\mu$, respectively.
	Then, $E_{\mu, \la }Q _{\la}^\bot = Q_{\mu}^\bot E_{\mu, \la} Q _{\la}^\bot$.
\end{lemma}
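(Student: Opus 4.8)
The plan is to read this as the assertion that the co-embedding $E:=E_{\mu,\la}$ pushes weak$-*$ continuous vectors of $\Pi_\la$ forward to weak$-*$ continuous vectors of $\Pi_\mu$. By the third part of Theorem~\ref{T:DLP}, $Q_\la^\bot$ is the orthogonal projection onto $\mr{WC}(\Pi_\la)$ and $Q_\mu^\bot$ is the orthogonal projection onto $\mr{WC}(\Pi_\mu)$. The claimed identity $E Q_\la^\bot = Q_\mu^\bot E Q_\la^\bot$ holds exactly when $E\bigl(\ran{Q_\la^\bot}\bigr)\subseteq\ran{Q_\mu^\bot}$: if this inclusion holds then $Q_\mu^\bot E x = E x$ for every $x=Q_\la^\bot x$, and conversely. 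So the whole content of the lemma is the inclusion $E(\mr{WC}(\Pi_\la))\subseteq\mr{WC}(\Pi_\mu)$.

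To prove that inclusion I would use the intertwiner description of $\mr{WC}$ from the second part of Theorem~\ref{T:DLP}. First record that $E$ intertwines the two GNS row isometries: directly from its defining formula $E(p+N_\la)=p+N_\mu$ on the dense subspace of $\hardy(\la)$ one has $E\Pi_{\la;k}(p+N_\la)=E(\mathfrak{z}_k p+N_\la)=\mathfrak{z}_k p+N_\mu=\Pi_{\mu;k}E(p+N_\la)$, and since $E$ is bounded this extends to $E\Pi_\la^\beta=\Pi_\mu^\beta E$ for all $\beta\in\F^d$. Now take $x\in\mr{WC}(\Pi_\la)$ and, invoking Theorem~\ref{T:DLP}, write $x=Xh$ with $h\in\hardy$ and $X:\hardy\to\hardy(\la)$ a bounded intertwiner, i.e.\ $XL^\beta=\Pi_\la^\beta X$. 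Then $EX:\hardy\to\hardy(\mu)$ is bounded and $(EX)L^\beta=E\Pi_\la^\beta X=\Pi_\mu^\beta(EX)$, so $EX$ is a bounded intertwiner for $\Pi_\mu$; hence $Ex=(EX)h\in\mr{WC}(\Pi_\mu)$ by Theorem~\ref{T:DLP} again. This gives the inclusion and hence the lemma.

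I do not expect a serious obstacle here — the argument is soft, being essentially a functoriality statement. The only points needing (minor) care are the verification that $E_{\mu,\la}$ genuinely intertwines $\Pi_\la$ and $\Pi_\mu$ (immediate from the formula defining $E_{\mu,\la}$ on free polynomials, together with boundedness), and the observation that Theorem~\ref{T:DLP} presents $\mr{WC}(\Pi)$ \emph{exactly} as the set of all $Xh$ with $X$ an intertwiner, so that composing an intertwiner on the left with $E$ and feeding the same $h$ produces an honest element of $\mr{WC}(\Pi_\mu)$ with no closure subtleties. Note that the hypothesis $\mu\le\la$ is used only to guarantee that the co-embedding $E_{\mu,\la}$ exists, and that the argument gives no information about the ``other'' inclusion $E(\ran{Q_\la})\subseteq\ran{Q_\mu}$, which is why only the one-sided identity $E Q_\la^\bot=Q_\mu^\bot E Q_\la^\bot$ is asserted.
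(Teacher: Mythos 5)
Your argument is correct and is essentially identical to the paper's proof: both reduce the operator identity to the inclusion $E(\mr{WC}(\Pi_\la))\subseteq\mr{WC}(\Pi_\mu)$ and establish it by writing a WC vector as $Xg$ for a bounded intertwiner $X$ and observing that $EX$ is again an intertwiner, invoking Theorem \ref{T:DLP}. You simply spell out a couple of routine verifications (that $E$ intertwines the GNS row isometries, and the equivalence of the one-sided identity with the range inclusion) that the paper leaves implicit.
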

\begin{proof}
	Set $E=E_{\mu, \la}$.
	Let $h\in\mathbb{H}^2_d(\lambda)$ be a WC vector of $\Pi_\lambda$.
	By Theorem \ref{T:DLP} there is an intertwiner $X:\mathbb{H}^2_d\to \mathbb{H}^2_d(\lambda)$ and a vector $g\in \mathbb{H}^2_d$ such that $Xg=h$.
	As $EX$ intertwines $\Pi_\lambda$ and $\Pi_\mu$, it follows that $Eh=EXg$ is a WC vector of $\Pi_\mu$.
	Thus, $EQ_{\lambda}^\bot=Q_{\mu}^\bot EQ_{\lambda}^\bot$.
\end{proof}

\begin{lemma} \label{Cuntzhered}
	Suppose $\mu,\la\in\posncm$ with $\mu \leq \la$.
	Let $P _\la := P _{\la ; Cuntz}$ be the $\Pi _\la -$reducing projection onto the support of its Cuntz direct summand.
	Then, $E_{\mu, \la} P_\la = P _\mu E_{\mu, \la} P _\la$.
	In particular, if $\la$ is Cuntz type, then $\mu$ is Cuntz type, and $\ft=\mr{Cuntz}$ is a hereditary type.
\end{lemma}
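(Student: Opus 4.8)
The plan is to argue directly from G.~Popescu's Wold decomposition of a row isometry, exploiting the fact that $E := E_{\mu,\la}$ is a bounded intertwiner. First recall that for any row isometry $\Pi$ on a Hilbert space $\cH$ the ranges $\ran{\Pi^\om}$ over words $\om$ of a fixed length $k$ are pairwise orthogonal (since $\Pi_i^*\Pi_j = \delta_{ij}I$), so that the closed spans
\[ \mc{V}_k(\Pi) := \bigvee_{|\om|=k} \Pi^\om \cH = \bigoplus_{|\om|=k}\ran{\Pi^\om} \]
form a decreasing chain of $\Pi$-invariant subspaces with $\mc{V}_0(\Pi)=\cH$, and Popescu's Wold decomposition \cite{Pop-dil} identifies the support $\ran{P_{\Pi;\mr{Cuntz}}}$ of the Cuntz (surjective) summand of $\Pi$ with $\bigcap_{k\ge 0}\mc{V}_k(\Pi)$. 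Thus $\ran{P_\la} = \bigcap_k \mc{V}_k(\Pi_\la)$ and $\ran{P_\mu} = \bigcap_k \mc{V}_k(\Pi_\mu)$.

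Next I would use that $E\,\Pi_\la^\om = \Pi_\mu^\om\,E$ for all $\om\in\F^d$, which is immediate from $E(p+N_\la)=p+N_\mu$ together with density and continuity. Fix $k$. On the dense subset of $\mc{V}_k(\Pi_\la)$ consisting of finite sums $\sum_{|\om|=k}\Pi_\la^\om h_\om$ we have $E\big(\sum_{|\om|=k}\Pi_\la^\om h_\om\big) = \sum_{|\om|=k}\Pi_\mu^\om(Eh_\om)\in\mc{V}_k(\Pi_\mu)$, so by continuity of $E$ (i.e.\ $E(\overline{W})\subseteq\overline{E(W)}$) we get $E\,\mc{V}_k(\Pi_\la)\subseteq\mc{V}_k(\Pi_\mu)$. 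Intersecting over $k$,
\[ E\,\ran{P_\la} = E\Big(\bigcap_k\mc{V}_k(\Pi_\la)\Big)\subseteq \bigcap_k E\,\mc{V}_k(\Pi_\la)\subseteq\bigcap_k\mc{V}_k(\Pi_\mu) = \ran{P_\mu}, \]
which says precisely that $P_\mu^\perp E P_\la = 0$, that is, $E_{\mu,\la}P_\la = P_\mu E_{\mu,\la}P_\la$.

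Since this identity holds for every pair $\mu\le\la$ in $\posncm$, Proposition~\ref{L:EProperties}(a) then yields that $\ft=\mr{Cuntz}$ is a hereditary type; concretely, when $\la$ is Cuntz type one has $P_\la = I$, so $\ran{E_{\mu,\la}}\subseteq\ran{P_\mu}$, and since $E_{\mu,\la}$ has dense range this forces $P_\mu = I$, i.e.\ $\mu$ is Cuntz type.

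The only step requiring care is the inclusion $E\big(\bigcap_k\mc{V}_k(\Pi_\la)\big)\subseteq\bigcap_k\mc{V}_k(\Pi_\mu)$: the point is that although $E$ is merely contractive rather than isometric, boundedness alone suffices to push the algebraic intertwining (valid on the dense set of finite sums) through the closures, after which the intersection passes through $E$ trivially in the direction needed. The identification of $\ran{P_\la}$ with $\bigcap_k\mc{V}_k(\Pi_\la)$ is standard, requiring nothing beyond the orthogonality relations for a row isometry and Popescu's Wold decomposition, so I expect no further obstacle.
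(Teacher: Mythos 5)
Your proof is correct and follows essentially the same route as the paper: both identify the Cuntz support via Popescu's Wold decomposition as $\bigcap_N \bigvee_{|\alpha|=N}\Pi^\alpha\cH$ and push it through the intertwining relation $E\Pi_\la^\alpha=\Pi_\mu^\alpha E$, then invoke Proposition \ref{L:EProperties}(a). (A minor simplification: since there are only $d^k$ words of length $k$ and the ranges $\ran{\Pi^\om}$ are closed and pairwise orthogonal, $\mc{V}_k(\Pi)$ is already the set of finite sums $\sum_{|\om|=k}\Pi^\om h_\om$, so the density-and-continuity step is not needed.)
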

\begin{proof}
	It follows from Popescu's Wold decomposition theorem, \cite[Theorem 1.3]{Pop-dil}, that the range of $P_\la$ is the set of all $x \in \mathbb{H} ^2 _d (\la )$ so that for any non-negative integer, $N$, there exist $\{x_\alpha | \ \alpha\in\F ^d,|\alpha|=N\} \subset \mathbb{H} ^2 _d (\la )$ such that 
$$ x = \sum _{|\alpha | = N} \Pi _\la ^\alpha x_\alpha. $$
Set $E=E_{\mu, \la}$.
Then,
$$ E x = \sum _{|\alpha | = N} \Pi _\mu ^\alpha E x_{\alpha},$$
for any non-negative integer $N$.
It follows that $E P_\la = P _\mu E P _\la$, from which the remaining claim follows on application of Proposition \ref{L:EProperties}(a).
\end{proof}

\begin{lemma} \label{vNvsdil}
Let $\Pi$ be a row isometry on a Hilbert space $\cH$ and set 
$$ \cH _0 := \bigvee _{\beta,\gamma \in \mathbb{F}^d} \Pi^\beta \Pi^{\gamma *} \mr{WC}(\Pi ). $$
Then $\cH _0$ is $\Pi -$reducing and the restriction of $\Pi$ to $\cH_0^\perp$ is the von Neumann--type summand of $\Pi$. 
\end{lemma}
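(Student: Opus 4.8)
The plan is to identify $\cH_0^\perp$ with the largest $\Pi$-reducing subspace on which the restriction of $\Pi$ has no weak-$*$ continuous vectors, and then to check that a Cuntz row isometry with no WC vectors is exactly a von Neumann-type row isometry. First I would verify that $\cH_0$ is $\Pi$-reducing. By Theorem \ref{T:DLP}(2), $\mr{WC}(\Pi)$ is $\Pi$-invariant and equals $\nbran Q^\perp$ for the structure projection $Q$ of $\Pi$; since the self-adjoint corner $Q\mf{S}(\Pi)Q = \mr{vN}(\Pi)Q$ is a von Neumann algebra, $Q$ (hence $Q^\perp$) commutes with $\Pi^*$ as well, so $\mr{WC}(\Pi)$ need not be reducing but the smallest reducing subspace containing it is exactly the closed span of $\Pi^\beta\Pi^{\gamma*}\mr{WC}(\Pi)$ over all words $\beta,\gamma$. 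Indeed, $\cH_0$ is by construction invariant under each $\Pi_j$ and each $\Pi_j^*$ (apply $\Pi_k$ or $\Pi_k^*$ to a generator $\Pi^\beta\Pi^{\gamma*}x$ and use $\Pi_k^*\Pi^\beta = \delta$-term or a shorter word, together with the Cuntz/pure relations as appropriate — in all cases one lands back in the span), so $\cH_0$ reduces $\Pi$ and $\cH_0^\perp$ does too.

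Next I would argue that the restriction $\Pi|_{\cH_0^\perp}$ has no WC vectors. A vector $x\in\cH_0^\perp$ that is WC for $\Pi|_{\cH_0^\perp}$ is also WC for $\Pi$ (the property of the functional $\ell_x(L^\alpha) = \ip{x}{\Pi^\alpha x}$ being weak-$*$ continuous is intrinsic and does not see the ambient space, and $\mr{WC}(\Pi|_{\cH_0^\perp}) \subseteq \mr{WC}(\Pi)\cap\cH_0^\perp$), hence $x\in\mr{WC}(\Pi)\subseteq\cH_0$, forcing $x=0$. Consequently the structure projection of $\Pi|_{\cH_0^\perp}$ is the identity, so by the Structure Theorem $\mf{S}(\Pi|_{\cH_0^\perp}) = \mr{vN}(\Pi|_{\cH_0^\perp})$ is self-adjoint; moreover a row isometry with $Q=I$ has no pure type-$L$ summand (a pure summand contributes WC vectors, since an ampliation of $L$ has $\mr{WC}(L^{(\infty)}) = \hardy\otimes\C^{(\infty)}$, everything), so $\Pi|_{\cH_0^\perp}$ is Cuntz with self-adjoint free semigroup algebra, i.e. of von Neumann type by definition.

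It then remains to see that $\cH_0^\perp$ is in fact the von Neumann-type summand, i.e. that $\Pi|_{\cH_0}$ has no von Neumann-type part. Using the Kennedy decomposition $\Pi = \Pi_L\oplus\Pi_{\mr{ACC}}\oplus\Pi_{\mr{dil}}\oplus\Pi_{\mr{vN}}$ with reducing projections $P_{\mr{L}},P_{\mr{ACC}},P_{\mr{dil}},P_{\mr{vN}}$: the WC subspace of each summand is well understood — $\mr{WC}(\Pi_L) = \ran{P_{\mr{L}}}$ (pure type), $\mr{WC}(\Pi_{\mr{ACC}}) = \ran{P_{\mr{ACC}}}$ (the defining property of ACC is that its free semigroup algebra is completely isometrically weak-$*$ homeomorphic to $\mathfrak{L}^\infty_d$, whose structure projection is $0$), $\mr{WC}(\Pi_{\mr{dil}})$ is the pure invariant subspace appearing in the upper-triangular form $\bpm L\otimes I & * \\ 0 & T\epm$, which generates the whole dilation space under $\Pi,\Pi^*$ by minimality of the dilation, and $\mr{WC}(\Pi_{\mr{vN}}) = \{0\}$ since its algebra is self-adjoint with $Q=I$. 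Therefore $\mr{WC}(\Pi)$ contains $\ran{P_{\mr{L}}}\oplus\ran{P_{\mr{ACC}}}\oplus(\text{pure part of }\ran{P_{\mr{dil}}})$, and closing up under $\Pi^\beta\Pi^{\gamma*}$ fills out $\ran{P_{\mr{L}}}\oplus\ran{P_{\mr{ACC}}}\oplus\ran{P_{\mr{dil}}}$ while adding nothing from $\ran{P_{\mr{vN}}}$ (that summand is reducing and orthogonal to all the generators). Hence $\cH_0 = \ran{(I-P_{\mr{vN}})}$ and $\cH_0^\perp = \ran{P_{\mr{vN}}}$, as claimed.

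The main obstacle I anticipate is the third paragraph: carefully justifying that $\mr{WC}(\Pi_{\mr{dil}})$, after closing up under the reducing operations, is \emph{all} of the dilation summand and not a proper reducing subspace of it. This is where the defining feature of dilation type — that $\Pi_{\mr{dil}}$ is the \emph{minimal} row-isometric dilation of the row co-isometry $T$, with the pure part $L\otimes I$ sitting as a $\Pi$-invariant (not reducing) subspace — must be used: minimality says precisely that $\bigvee_{\beta}\Pi_{\mr{dil}}^\beta(L\otimes I\text{-space})$ is everything after one also applies the adjoints, and one must confirm the pure invariant subspace indeed lies in $\mr{WC}(\Pi_{\mr{dil}})$ (it does, being unitarily equivalent to an ampliation of $L$, all of whose vectors are WC). Handling the interaction of the block-upper-triangular structure with the WC subspace cleanly is the crux; everything else is bookkeeping with Theorem \ref{T:DLP} and the Structure Theorem.
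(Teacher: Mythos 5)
Your proposal reaches the right conclusion and its overall architecture matches the paper's: show that $\Pi|_{\cH_0^\perp}$ is von Neumann type, then show $\nbran P_{\mr{vN}}\subseteq\cH_0^\perp$. For the first half you argue via the structure projection: no WC vectors in $\cH_0^\perp$ forces $Q=I$ for the restriction, hence a self-adjoint free semigroup algebra. The paper instead invokes Kennedy's wandering-vector criterion (von Neumann type iff no wandering vectors, every wandering vector being WC); your route is a legitimate alternative given Theorem \ref{T:DLP}(3).

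The second half is where you diverge more substantially and where the sketch is shakiest. The paper's argument is short and direct: for $h\in\nbran P_{\mr{vN}}$ and a generator $\Pi^\beta\Pi^{\gamma *}Xv$ of $\cH_0$ (with $X$ an intertwiner), one has $\ip{\Pi^\beta\Pi^{\gamma *}Xv}{h}=\ip{\Pi^\beta\Pi^{\gamma *}(P_{\mr{vN}}X)v}{h}$, and $P_{\mr{vN}}X$ is again an intertwiner, so its range lies in $\mr{WC}(\Pi)\cap\nbran P_{\mr{vN}}=\{0\}$. Your parenthetical ``that summand is reducing and orthogonal to all the generators'' is exactly the claim that needs proof: knowing $\mr{WC}(\Pi_{\mr{vN}})=\{0\}$ gives only that $\mr{WC}(\Pi)$ meets $\nbran P_{\mr{vN}}$ trivially, not that the two are orthogonal, and the passage from trivial intersection to orthogonality is precisely the observation that $P_{\mr{vN}}$ carries ranges of intertwiners to ranges of intertwiners (Theorem \ref{T:DLP}(2)). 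In addition, your appeal to minimality of the dilation is misdirected: minimality says $\nbran P_{\mr{dil}}$ is generated by the $\Pi^\beta$ applied to the \emph{orthogonal complement} of the pure invariant subspace, not to the pure subspace itself. In any case the inclusion $\nbran (I-P_{\mr{vN}})\subseteq\cH_0$ that your third paragraph labours over is automatic once the first half gives $\cH_0^\perp\subseteq\nbran P_{\mr{vN}}$, so the summand-by-summand analysis of $\mr{WC}$ can be dropped entirely in favour of the one-line orthogonality computation above.
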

\begin{proof}
By a result of M. Kennedy, a row isometry, $\Pi$, is of von Neumann type if and only if it has no wandering vectors \cite{MK-wand}. Specifically, any pure or AC Cuntz row isometry has wandering vectors. Since a dilation--type row isometry has a pure type$-L$ restriction to the non-trivial invariant subspace of its weak$-*$ continuous vectors, it also has wandering vectors. It is clear that any wandering vector for $\Pi$ belongs to $WC (\Pi )$. By construction $\cH_0^\perp \cap WC (\Pi ) = \{ 0 \}$ so that $\Pi$ restricted to $\cH _0 ^\perp$ has no wandering vectors and is hence of von Neumann type.
That is, $\mathcal{H}_0^\bot\subset \nbran P_{\mr{vN}}$.
Let $h\in \nbran P_{\mr{vN}} $, let $v\in\mathbb{H}^2_d$, and let $X:\mathbb{H}^2_d\to \mathcal{H}$ be an intertwiner.
Then, for any words $\beta,\gamma$, we have
\[ \ip{\Pi^\beta \Pi^{\gamma *} Xv}{h}=\ip{\Pi^\beta \Pi^{\gamma *} Xv}{P_{\mr{vN}}h}=\ip{\Pi^\beta\Pi^{\gamma *} (P_{\mr{vN}}X)v}{h}. \]
Since $P_{\mr{vN}}X v\in \mr{WC}(\Pi)\cap \nbran P_{\mr{vN}}$, we have $P_{\mr{vN}}Xv=0$ and thus $h\in\mathcal{H}_0^\bot$.
Therefore, $\mathcal{H}_0^\bot= \nbran P_{\mr{vN}}$.
\end{proof}
\begin{remark}
	If $\Pi$ is of dilation type on $\cH$, then $\mr{WC}(\Pi)$ is $\Pi -$invariant but cannot contain any $\Pi -$reducing subspace.
	Thus, $\cH _0 = \cH$ for dilation--type row isometries.
\end{remark}
\begin{remark}
	By Theorem \ref{T:DLP}, we have $\mathrm{WC}(\Pi)$ equal to the range of $Q^\bot$, where $Q$ is the structure projection of $\Pi$.
	Suppose $\Pi$ and $\Xi$ are unitarily equivalent row isometries.
	That is, there exists $U$ a surjective isometry such that $U\Pi^\alpha=\Xi^\alpha U$ for each word $\alpha$.
	It then follows from Lemma \ref{vNvsdil} that $UP_{\Pi;\rm{vN}}=P_{\Xi;\rm{vN}}U$.
\end{remark}

The following fact is well--known and can be found in \cite[Lemma 8.9]{JM-ncld}:
\begin{lemma} \label{Cuntztwine}
	Let $\Pi $ and $\Xi$ be row isometries on Hilbert spaces $\cH , \cJ$ respectively and suppose that $\Pi$ is a Cuntz unitary.
	If $X : \cH \rightarrow \cJ$ is a bounded linear map satisfying
		$$ X\Pi^\alpha=\Xi^\alpha X, \quad \alpha\in\F ^d, $$
	then
		$$ \Pi ^\alpha X^* = X^*\Xi^\alpha, \quad \alpha\in\F ^d,$$
	and $X^* X $ is in the commutant of the von Neumann algebra of $\Pi$, and similarly $XX^*$ is in the commutant of the von Neumann algebra of $\Xi$.
\end{lemma}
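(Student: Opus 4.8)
The plan is to first establish the ``adjoint'' intertwining relation $\Pi^\alpha X^* = X^* \Xi^\alpha$, since the two commutant statements then fall out formally, and to obtain this relation one letter at a time and bootstrap to arbitrary words by multiplicativity. The single nontrivial ingredient is the Cuntz identity for $\Pi$, namely $\sum_{k=1}^d \Pi_k \Pi_k^* = I_\cH$ (equivalently, a Cuntz row isometry is also a row co-isometry, so that $(\Pi_1, \cdots, \Pi_d)$ is a \emph{surjective} isometry); everything else is a formal manipulation using the row-isometry relations $\Pi_j^* \Pi_k = \delta_{jk} I_\cH$ and $\Xi_j^* \Xi_k = \delta_{jk} I_\cJ$.

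For the key step, fix a letter $j$ and $h \in \cH$. Writing $h = \sum_k \Pi_k \Pi_k^* h$ by the Cuntz identity, applying $X$, and using the hypothesis $X\Pi_k = \Xi_k X$ gives $Xh = \sum_k \Xi_k (X\Pi_k^* h)$; applying $\Xi_j^*$ and using $\Xi_j^*\Xi_k = \delta_{jk} I_\cJ$ collapses the sum to $\Xi_j^* X h = X\Pi_j^* h$. Since $h$ is arbitrary, $X\Pi_j^* = \Xi_j^* X$, and taking adjoints yields $\Pi_j X^* = X^* \Xi_j$ for each $j$. Then for a word $\alpha = i_1\cdots i_n$ one slides $X^*$ across the factors one at a time, $\Pi_{i_1}\cdots\Pi_{i_n} X^* = \Pi_{i_1}\cdots\Pi_{i_{n-1}} X^* \Xi_{i_n} = \cdots = X^*\Xi^\alpha$, with the empty word $\varnothing$ being trivial.

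Finally, since $\mr{vN}(\Pi)$ is generated as a von Neumann algebra by $\Pi_1,\cdots,\Pi_d$, to see that the self-adjoint operator $X^*X$ lies in $\mr{vN}(\Pi)'$ it suffices to check that it commutes with each $\Pi_j$ (commutation with each $\Pi_j^*$ then follows by taking adjoints), and indeed $X^*X\,\Pi_j = X^*\Xi_j X = (\Pi_j X^*)X = \Pi_j\, X^*X$ using the relation just proved. Symmetrically, $XX^*\,\Xi_j = X(\Pi_j X^*) = (X\Pi_j)X^* = \Xi_j\, XX^*$, so $XX^*\in\mr{vN}(\Xi)'$. There is no serious obstacle here; the only point requiring a little care is recognizing that the Cuntz hypothesis is used only on the $\Pi$ side (the row isometry $\Xi$ is arbitrary), and that the whole argument rests on the resolution of the identity $\sum_k \Pi_k\Pi_k^* = I_\cH$ together with the orthonormality of the columns of $(\Xi_1, \cdots, \Xi_d)$.
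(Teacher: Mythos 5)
Your proof is correct, and it is the standard argument: the paper itself gives no proof of this lemma, simply citing it as well known (from Lemma 8.9 of the earlier paper \cite{JM-ncld}), and the expected argument is exactly the one you give. The one genuinely substantive step --- using the Cuntz (co-isometry) identity $\sum_k \Pi_k\Pi_k^* = I_{\mathcal{H}}$ on the $\Pi$ side only, together with $\Xi_j^*\Xi_k = \delta_{jk}I$, to get $X\Pi_j^* = \Xi_j^* X$ and hence $\Pi_j X^* = X^*\Xi_j$ --- is carried out correctly, and the two commutant assertions then follow formally from self-adjointness of $X^*X$ and $XX^*$ exactly as you say. No gaps.
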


\begin{lemma}\label{vNhered}
	Suppose $\mu,\la\in\posncm$ satisfy $\mu \leq \la$.
	If $\la$ is of von Neumann type, then so is $\mu$. That is, $\ft = \mr{vN}$ is a hereditary type.
\end{lemma}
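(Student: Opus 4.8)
The plan is to reduce the statement to the absence of nonzero intertwiners into $\hardy(\mu)$. By Theorem~\ref{T:DLP}(2), $\mr{WC}(\Pi_\mu)=\{Xh\mid h\in\hardy,\ X\text{ an intertwiner }\hardy\to\hardy(\mu)\}$, so $\Pi_\mu$ has no nonzero intertwiner from $\hardy$ if and only if $\mr{WC}(\Pi_\mu)=\{0\}$; and when that holds, the subspace $\cH_0$ of Lemma~\ref{vNvsdil} (which is generated by $\mr{WC}(\Pi_\mu)$) is $\{0\}$, so $\Pi_\mu=\Pi_\mu|_{\cH_0^\perp}$ is the von Neumann--type summand of itself, i.e.\ $\mu$ is of von Neumann type. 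On the other side, since $\la$ is of von Neumann type, Lemma~\ref{vNvsdil} (or Theorem~\ref{T:DLP}(3), since $\mf S(\Pi_\la)$ self-adjoint forces the structure projection to be $I$) gives $\mr{WC}(\Pi_\la)=\{0\}$, so $\Pi_\la$ admits no nonzero intertwiner from $\hardy$; moreover $\Pi_\la$ is a Cuntz unitary.

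Now set $E:=E_{\mu,\la}\colon\hardy(\la)\to\hardy(\mu)$, the contractive co-embedding, which has dense range and satisfies $E\Pi_\la^\alpha=\Pi_\mu^\alpha E$ for all $\alpha\in\F^d$. Because $\Pi_\la$ is a Cuntz unitary, Lemma~\ref{Cuntztwine} applies with $\Pi=\Pi_\la$, $\Xi=\Pi_\mu$, $X=E$, and yields $\Pi_\la^\alpha E^*=E^*\Pi_\mu^\alpha$; that is, $E^*$ is an intertwiner from $\Pi_\mu$ to $\Pi_\la$. Given any intertwiner $X\colon\hardy\to\hardy(\mu)$ for $\Pi_\mu$, the composition $E^*X\colon\hardy\to\hardy(\la)$ satisfies $E^*X L^\alpha=E^*\Pi_\mu^\alpha X=\Pi_\la^\alpha E^*X$, so $E^*X$ is an intertwiner for $\Pi_\la$ and hence is $0$ by the previous paragraph. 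Since $E$ has dense range, $E^*$ is injective, so $X=0$. Thus $\mr{WC}(\Pi_\mu)=\{0\}$, and by the reduction above $\mu$ is of von Neumann type.

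The only real obstacle is the passage from ``$E$ intertwines $\Pi_\la$ to $\Pi_\mu$'' to ``$E^*$ intertwines $\Pi_\mu$ to $\Pi_\la$'': adjoining the intertwining relation gives only a co-intertwining relation in general, and it is precisely the Cuntz hypothesis on $\Pi_\la$, via Lemma~\ref{Cuntztwine}, that upgrades this to a genuine intertwining relation. Everything else is routine: the dense range of $E$ (to get injectivity of $E^*$), and the intertwiner description of weak-$*$ continuous vectors together with Lemma~\ref{vNvsdil} to translate between ``no WC vectors'' and ``von Neumann type''.
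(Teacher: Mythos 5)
Your proposal is correct and follows essentially the same route as the paper's proof: both pass to intertwiners via Theorem~\ref{T:DLP}, use Lemma~\ref{Cuntztwine} (valid since a von Neumann--type $\Pi_\la$ is Cuntz) to turn $E^*$ into an intertwiner from $\Pi_\mu$ to $\Pi_\la$, and then use injectivity of $E^*$ (dense range of $E$) to kill $\mr{WC}(\Pi_\mu)$. The only cosmetic difference is that the paper first invokes the hereditary singular cone to write $\mu=\mu_{\mr{dil}}+\mu_{\mr{vN}}$ and then shows $\mu_{\mr{dil}}=0$, whereas you go directly from $\mr{WC}(\Pi_\mu)=\{0\}$ to the conclusion via Lemma~\ref{vNvsdil}; both are valid.
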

\begin{proof}
	The set of all positive singular NC measures is a positive hereditary cone so that $\mu$ is necessarily singular.
	That is, $\mu = \mu _{\rm{dil}} + \mu _{\rm{vN}}$, is the sum of a positive dilation--type and a von Neumann--type NC measure.
	Suppose that $x \in \mr{WC}(\mu )\subseteq \hardy (\mu _{\rm{dil}})$ is a weak$-*$ continuous vector.
	By Theorem \ref{T:DLP}, there is a bounded intertwiner $X : \mathbb{H} ^2 _d \rightarrow \mathbb{H} ^2 _d (\mu )$ and a vector $f \in \mathbb{H} ^2 _d$ so that $Xf =x$.
	Since $\mu \leq \la$, the  co-embedding $E : \mathbb{H} ^2 _d (\la ) \to \mathbb{H} ^2 _d (\mu )$ is contractive and $E \Pi _\la ^\alpha = \Pi _\mu ^\alpha E$ for any word $\alpha$.
	By Lemma \ref{Cuntztwine}, we also have that $E^* \Pi _\mu ^\alpha = \Pi _\la ^\alpha E^*$, so that $Y := E^* X : \mathbb{H} ^2 _d \rightarrow \mathbb{H} ^2 _d (\la )$ is an intertwiner:
	$$
		YL^\alpha = E^* X L^\alpha = E^* \Pi _\mu ^\alpha X = \Pi _\la ^\alpha Y.	
	$$ 
	Setting $y = Y f \in \mathbb{H} ^2 _d (\la )$, we see that $y$ is in the range of a bounded interwtiner.
	Thus, $y \in \rm{WC} (\la ) = \{ 0 \}$, since $\la$ is of von Neumann type.
	Because $E^*$ is injective, we have $x=0$.
	It follows that $\rm{WC} (\mu ) = \{ 0 \}$, and thus $\mu _{\rm{dil}}= 0$.
	We conclude that $\mu$ is of von Neumann type.
\end{proof}

\begin{lemma} \label{diltwine}
	Suppose $\mu,\lambda\in\posncm$ satisfy $\mu \leq \la$.
	Then, $E_{\mu, \la}P_{\la ; \rm{dil}} = P_{\mu ; \rm{dil}}  E_{\mu, \la}P _{\la ; \rm{dil} }$.
\end{lemma}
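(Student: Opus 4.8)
The plan is to reduce the stated identity to the single assertion that $P_{\mu;\mr{vN}}\,E_{\mu,\la}\,P_{\la;\mr{dil}}=0$, i.e.\ that $E_{\mu,\la}$ carries the dilation summand of $\Pi_\la$ into a subspace orthogonal to the von Neumann summand of $\Pi_\mu$. Granting this, one invokes Proposition \ref{embedecomp}, which gives $E_{\mu,\la}P_{\la;\mr{s}}=P_{\mu;\mr{s}}E_{\mu,\la}$: since $P_{\la;\mr{dil}}\le P_{\la;\mr{s}}$ we have $E_{\mu,\la}P_{\la;\mr{dil}}=E_{\mu,\la}P_{\la;\mr{s}}P_{\la;\mr{dil}}=P_{\mu;\mr{s}}E_{\mu,\la}P_{\la;\mr{dil}}$, and since the singular summand decomposes as $P_{\mu;\mr{s}}=P_{\mu;\mr{dil}}+P_{\mu;\mr{vN}}$, the desired relation $E_{\mu,\la}P_{\la;\mr{dil}}=P_{\mu;\mr{dil}}E_{\mu,\la}P_{\la;\mr{dil}}$ follows at once from the vanishing of $P_{\mu;\mr{vN}}E_{\mu,\la}P_{\la;\mr{dil}}$.

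To prove $P_{\mu;\mr{vN}}E_{\mu,\la}P_{\la;\mr{dil}}=0$ I would introduce the operator $Z:=P_{\mu;\mr{vN}}E_{\mu,\la}$, regarded as a map from $\ran{P_{\la;\mr{dil}}}$ into $\ran{P_{\mu;\mr{vN}}}$. Since $P_{\la;\mr{dil}}$ reduces $\Pi_\la$, $P_{\mu;\mr{vN}}$ reduces $\Pi_\mu$, and $E_{\mu,\la}$ intertwines $\Pi_\la$ and $\Pi_\mu$, a short computation shows that $Z$ intertwines $\Pi_{\la_{\mr{dil}}}$ with $\Pi_{\mu_{\mr{vN}}}$. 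The key structural input is that dilation type is Cuntz, so $\Pi_{\la_{\mr{dil}}}$ is a Cuntz unitary; Lemma \ref{Cuntztwine} then promotes the intertwining relation to $Z\,\Pi_{\la_{\mr{dil}}}^{\a*}=\Pi_{\mu_{\mr{vN}}}^{\a*}Z$ for all words $\a$, so that $Z$ intertwines the full $*$-structures generated by the two row isometries.

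Next I would check that $Z$ annihilates $\mr{WC}(\Pi_{\la_{\mr{dil}}})=\mr{WC}(\Pi_\la)\cap\ran{P_{\la;\mr{dil}}}$: for such a vector $w$, Lemma \ref{ACtwine} gives $E_{\mu,\la}w\in\mr{WC}(\Pi_\mu)$, and $P_{\mu;\mr{vN}}$ kills every weak$-*$ continuous vector of $\Pi_\mu$ --- writing such a vector as $Xh$ for an intertwiner $X:\hardy\to\hardy(\mu)$ (Theorem \ref{T:DLP}), the operator $P_{\mu;\mr{vN}}X$ is again an intertwiner, now with range in the von Neumann summand, so $P_{\mu;\mr{vN}}Xh$ lies in $\mr{WC}(\Pi_{\mu_{\mr{vN}}})$, which is $\{0\}$ because a von Neumann type row isometry has no weak$-*$ continuous vectors (Lemma \ref{vNvsdil}). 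Finally, since $\Pi_{\la_{\mr{dil}}}$ is purely dilation type it has no von Neumann summand, so Lemma \ref{vNvsdil}, or equivalently the remark following it, yields
\[ \ran{P_{\la;\mr{dil}}}=\bigvee_{\beta,\gamma\in\F^d}\Pi_{\la_{\mr{dil}}}^{\beta}\,\Pi_{\la_{\mr{dil}}}^{\gamma*}\,\mr{WC}(\Pi_{\la_{\mr{dil}}}). \]
Applying the bounded operator $Z$ to this spanning set and using that $Z$ intertwines both the $\Pi^\beta$ and the $\Pi^{\gamma*}$ while killing $\mr{WC}(\Pi_{\la_{\mr{dil}}})$, we conclude $Z=0$, which is exactly what was needed.

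The hard part will be the second paragraph: recognizing that on the (Cuntz) dilation summand the co-embedding $E_{\mu,\la}$ can be made to intertwine the adjoint operators as well, via Lemma \ref{Cuntztwine} --- a priori $E_{\mu,\la}$ only intertwines the $\Pi^\a$ --- together with the observation that $P_{\mu;\mr{vN}}$ annihilates all weak$-*$ continuous vectors of $\Pi_\mu$. These two facts are precisely what allow the ``generated as a reducing subspace by its weak$-*$ continuous vectors'' description of the dilation summand (Lemma \ref{vNvsdil}) to be transported across $E_{\mu,\la}$.
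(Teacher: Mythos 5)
Your proof is correct and follows essentially the same route as the paper's: both arguments reduce to the singular summands via Proposition \ref{embedecomp}, use the fact that the dilation summand is generated as a reducing subspace by its weak$-*$ continuous vectors (Lemma \ref{vNvsdil}), transport this generating set across $E_{\mu,\la}$ using the adjoint intertwining available for Cuntz row isometries (Lemma \ref{Cuntztwine}) together with Lemma \ref{ACtwine}, and land in the orthogonal complement of the von Neumann summand of $\Pi_\mu$. Your packaging of the conclusion as $P_{\mu;\mathrm{vN}}E_{\mu,\la}P_{\la;\mathrm{dil}}=0$ rather than as an inclusion of ranges is only a cosmetic difference.
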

The above lemma and Proposition \ref{L:EProperties}(a) imply that $\ft=\mr{dil}$ is also a hereditary type.
\begin{proof}
	Set $P _\la := P _{\la ; \rm{dil}}$, $P_\mu := P _{\mu ; \mr{dil}}$ and $E=E_{\mu, \la}$.
	We know that $P_\la = P _{\la ; \rm{dil}} \leq P_{\la ;\rm{s}}$, that $E P _{\la ;\rm{s}} = P _{\mu ;\rm{s} } E$ and that $E \Pi_{\la}^\beta = \Pi_{\mu}^\beta E$ for every word $\beta$.
	Hence we assume, without loss in generality, that both $\mu $ and $ \la$ are singular.
	The GNS row isometry of any singular NC measure is Cuntz, so we assume in particular that $\Pi _\la , \Pi _\mu$ are Cuntz.
	Note that $\mathrm{WC}(\la ) \subset \nbran P_\la$ is $\Pi_\la -$invariant and that $\nbran P_\la$ is the smallest $\Pi_\la-$reducing subspace of $\mathbb{H}^2_d(\la)$ which contains $\mathrm{WC}(\la)$ by Lemma \ref{vNvsdil}.
	Let $x \in \mathbb{H}^2_d (\la _{\rm{dil}} )$, and denote by $\pi_\la$ and $\pi_\mu$ the GNS representations of $\mathscr{E}_d$ induced by $\la$ and $\mu$, respectively.
	Since $x \in \hardy (\la _{\rm{dil}})$, it belongs to $\nbran P_{\la}$, where $P_\la = P_{\la ; \mr{dil}}$ and thus by Lemma \ref{vNvsdil}, there is a sequence of operators $A_1,A_2,\ldots\in \mathscr{E}_d$ and a $y\in\mathrm{WC}(\la)$ such that
		$$ x = \lim_n \pi_\la(A_n) y. $$
	Since $\Pi_\la$ is Cuntz, we can again apply Lemma \ref{Cuntztwine} to find that  
		$$ E x = \lim_n \pi_\mu(A_n) Ey. $$
	Because $y \in \mathrm{WC}(\la )$, it follows from Lemma \ref{ACtwine} that $Ey \in \mr{WC}(\mu )$.
	Thus, $Ex \in \nbran P_\mu$, where $P_\mu = P _{\mu ; dil}$ by Lemma \ref{vNvsdil} again.
\end{proof}

\begin{lemma} \label{munotvN}
	Let $\lambda\in\posncm$.
	If $x \in \nbran P_{\la;\mr{vN}}$, then the positive NC measure $\lambda_x$ determined by $\la _x (L^\alpha ) = \ip{x}{\Pi _\la ^\alpha x} _\la$, $\alpha\in\F ^d$, is of von Neumann type.
\end{lemma}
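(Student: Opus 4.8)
The plan is to recognize $\lambda_x$ as the NC measure arising from a cyclic restriction of $\Pi_\la$ and then to show that this restriction has no weak$-*$ continuous vectors, which by Lemma \ref{vNvsdil} forces it to be of von Neumann type.

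First I would set $\cK := \bigvee_{\alpha\in\F^d} \Pi_\la^\alpha x \subseteq \hardy(\la)$. This subspace is invariant under each $\Pi_{\la;j}$, so the restriction $\Xi := \Pi_\la|_\cK$ is again a row isometry, and it is cyclic with cyclic vector $x$. Since $\Xi^\alpha x = \Pi_\la^\alpha x$ for every word $\alpha$, the associated positive NC measure $L^\alpha\mapsto\ip{x}{\Xi^\alpha x}$ is exactly $\lambda_x$, so Lemma \ref{cyclicCuntz} gives that $\Pi_{\lambda_x}$ is unitarily equivalent to $\Xi$. It therefore suffices to prove that $\Xi$ is of von Neumann type.

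Next I would record two facts. Because $\nbran P_{\la;\mr{vN}}$ is $\Pi_\la-$reducing and contains $x$, it contains each $\Pi_\la^\alpha x$, hence $\cK\subseteq\nbran P_{\la;\mr{vN}}$. On the other hand, applying Lemma \ref{vNvsdil} to $\Pi_\la$ shows that $\nbran P_{\la;\mr{vN}}$ is the orthogonal complement of a subspace containing $\mr{WC}(\Pi_\la)$; in particular $\cK\cap\mr{WC}(\Pi_\la)=\{0\}$.

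The crux is then to show $\mr{WC}(\Xi)=\{0\}$. Given $w\in\mr{WC}(\Xi)$, Theorem \ref{T:DLP} furnishes a bounded intertwiner $X:\hardy\to\cK$ and $f\in\hardy$ with $Xf=w$; composing $X$ with the inclusion $\cK\hookrightarrow\hardy(\la)$ (which intertwines $\Xi$ and $\Pi_\la$ because $\cK$ is $\Pi_\la-$invariant) exhibits $w$ as lying in the range of a bounded intertwiner for $\Pi_\la$, so $w\in\mr{WC}(\Pi_\la)$ by Theorem \ref{T:DLP}. Since also $w\in\cK$, the previous paragraph gives $w=0$. Finally, with $\mr{WC}(\Xi)=\{0\}$, Lemma \ref{vNvsdil} applied to $\Xi$ shows that the subspace $\bigvee_{\beta,\gamma}\Xi^\beta\Xi^{\gamma*}\mr{WC}(\Xi)$ is trivial, so $\Xi$ equals its own von Neumann--type summand; hence $\Xi$, and therefore $\lambda_x$, is of von Neumann type. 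I do not foresee a genuine obstacle here: the only delicate point is that weak$-*$ continuity is preserved under restriction to an invariant subspace, and this is exactly the content of the intertwiner description of $\mr{WC}$ in Theorem \ref{T:DLP}.
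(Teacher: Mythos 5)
Your proof is correct, but it takes a genuinely different route from the one in the paper. The paper's argument is algebraic: since the free semigroup algebra generated by the von Neumann--type summand is self-adjoint, each adjoint $(\Pi_\la^\beta)^*$ is a weak$-*$ limit of a net $\pi_\la(a_\gamma)$, and compressing by the isometry $U$ onto the cyclic subspace $\cH_x$ (which is $\Pi_\la-$invariant) sends this net to $\pi_{\la_x}(a_\gamma)$ and its limit to $(\Pi_{\la_x}^\beta)^*$; hence the weak$-*$ closure of $\pi_{\la_x}(\A)$ is self-adjoint. You instead argue spatially through Lemma \ref{vNvsdil}: the cyclic subspace $\cK$ sits inside $\nbran P_{\la;\mr{vN}}$, which meets $\mr{WC}(\Pi_\la)$ trivially, and the intertwiner description of $\mr{WC}$ in Theorem \ref{T:DLP} shows that $\mr{WC}(\Xi)$ injects into $\mr{WC}(\Pi_\la)\cap\cK=\{0\}$, after which Lemma \ref{vNvsdil} applied to $\Xi$ finishes the job. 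Each step checks out: the composite of an intertwiner for $\Xi$ with the inclusion $\cK\hookrightarrow\hardy(\la)$ is indeed a bounded intertwiner for $\Pi_\la$, and $\mr{WC}(\Xi)=\{0\}$ forces the generated reducing subspace $\cH_0$ of Lemma \ref{vNvsdil} to vanish, so $\Xi$ is its own von Neumann--type summand. What the paper's route buys is brevity and the observation that ``self-adjointness of the generated algebra'' passes directly to compressions onto invariant subspaces; what your route buys is that it sidesteps weak$-*$ limits of nets entirely and, incidentally, quietly repairs a small imprecision in the paper's wording (the paper says ``because $\la$ is von Neumann type'' even though the hypothesis is only $x\in\nbran P_{\la;\mr{vN}}$ --- your argument never needs $\la$ itself to be of von Neumann type, only that the reducing subspace $\nbran P_{\la;\mr{vN}}$ contains $x$ and is disjoint from $\mr{WC}(\Pi_\la)$).
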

\begin{proof}
	Let $\mathcal{H}_x$ denote the cyclic subspace of $\mathbb{H}^2_d(\lambda)$ generated by $x$.
	Note that there is a surjective isometry $U:\mathbb{H}^2_d(\lambda_x)\to \mathcal{H}_x$ such that $U(a+N_{\lambda_x})=\pi_\lambda(a)x$ for each $a\in\A$.
	Because $\lambda$ is von Neumann type, for any given word $\beta$, there exists a net $(a_\gamma)_\gamma$ in $\A$ such that $(\Pi^\beta_\lambda)^*$ is the weak$-*$ limit of $(\pi_\lambda(a_\gamma))_\gamma$.
	As $U^*\pi_\lambda(a)U=\pi_{\lambda_x}(a)$ for any $a\in\A$, it follows that $(\Pi^\beta_{\lambda_x})^*$ is the weak$-*$ limit of $(\pi_{\lambda_x}(a_\gamma))_\gamma$.
	This shows that the weak$-*$ closure of $\pi_{\lambda_x}(\A)$ is self-adjoint, and thus $\Pi_{\lambda_x}$ is of von Neumann type.
\end{proof}

\begin{lemma}\label{L:vNContain}
	Let $\mu,\nu\in\posncm$.
	If $\nu\leq \mu$ and $\nu$ is of von Neumann type,
	then $\nu\leq \mu_{\rm{vN}}$. That is, $\ft = \mr{vN}$ is a hereditary type.
\end{lemma}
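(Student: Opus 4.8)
The plan is to reduce the inequality $\nu\le\mu_{\mr{vN}}$ to Proposition \ref{L:EProperties}(d), applied with $\ft=\mr{vN}$ to the pair $\nu\le\mu$. That proposition yields $\nu\le\mu_{\mr{vN}}$ once its hypothesis $P_{\nu;\mr{vN}}E_{\nu,\mu}P_{\mu;\mr{vN}}=P_{\nu;\mr{vN}}E_{\nu,\mu}$ is verified. Since $\nu$ is of von Neumann type, $P_{\nu;\mr{vN}}=I$ on $\hardy(\nu)$, so this hypothesis collapses to
\[ E_{\nu,\mu}P_{\mu;\mr{vN}}=E_{\nu,\mu}, \qquad\text{equivalently}\qquad E_{\nu,\mu}P_{\mu;\mr{vN}}^{\bot}=0. \]
So the whole content of the lemma is the claim that the co-embedding $E:=E_{\nu,\mu}$ annihilates $\nbran P_{\mu;\mr{vN}}^{\bot}$.

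To prove $EP_{\mu;\mr{vN}}^{\bot}=0$ I would split $P_{\mu;\mr{vN}}^{\bot}$ using the Kennedy--Wold decomposition $\Pi_\mu=\Pi_{\mu;\mr{L}}\oplus\Pi_{\mu;\mr{ACC}}\oplus\Pi_{\mu;\mr{dil}}\oplus\Pi_{\mu;\mr{vN}}$, which gives $P_{\mu;\mr{vN}}^{\bot}=P_{\mu;\mr{ac}}+P_{\mu;\mr{dil}}$ (recall $P_{\mu;\mr{ac}}=P_{\mu;\mr{L}}+P_{\mu;\mr{ACC}}$), and then kill the two summands separately. Because $\nu$ is \emph{purely} of von Neumann type, every structure projection of $\Pi_\nu$ other than $P_{\nu;\mr{vN}}$ is zero; in particular $P_{\nu;\mr{ac}}=0$ and $P_{\nu;\mr{dil}}=0$. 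Proposition \ref{embedecomp}, applied to $\nu\le\mu$, then gives $EP_{\mu;\mr{ac}}=P_{\nu;\mr{ac}}E=0$, while Lemma \ref{diltwine}, applied to $\nu\le\mu$, gives $EP_{\mu;\mr{dil}}=P_{\nu;\mr{dil}}\,EP_{\mu;\mr{dil}}=0$. Adding these, $EP_{\mu;\mr{vN}}^{\bot}=EP_{\mu;\mr{ac}}+EP_{\mu;\mr{dil}}=0$, which is exactly what was required.

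With $EP_{\mu;\mr{vN}}=E$ and $P_{\nu;\mr{vN}}=I$ in hand, Proposition \ref{L:EProperties}(d) gives $\nu\le\mu_{\mr{vN}}$ and the proof is complete. (One can also finish without invoking part (d): since $I+N_\nu=E(I+N_\mu)=EP_{\mu;\mr{vN}}(I+N_\mu)$ and $P_{\mu;\mr{vN}}$ reduces $\pi_\mu(\scr{E}_d)$, Lemma \ref{L:PosSemi} gives, for every positive semidefinite $c\in\scr{A}_d$, $\nu(c)=\ip{P_{\mu;\mr{vN}}(I+N_\mu)}{E^{*}\pi_\nu(c)E\,P_{\mu;\mr{vN}}(I+N_\mu)}\le\ip{P_{\mu;\mr{vN}}(I+N_\mu)}{\pi_\mu(c)\,P_{\mu;\mr{vN}}(I+N_\mu)}=\mu_{\mr{vN}}(c)$.) The assertion that $\mr{vN}$ is a hereditary type was already established in Lemma \ref{vNhered}.

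The argument is short bookkeeping, so I do not expect a genuine obstacle; the only delicate point is that one must use that $\nu$ is \emph{of} von Neumann type, not merely that $\nu$ is singular. Singularity alone would force $P_{\nu;\mr{ac}}=0$ but not $P_{\nu;\mr{dil}}=0$, and it is precisely the vanishing of $P_{\nu;\mr{dil}}$ that makes the Lemma \ref{diltwine} step collapse to $EP_{\mu;\mr{dil}}=0$.
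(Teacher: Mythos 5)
Your proposal is correct and follows essentially the same route as the paper: the paper likewise uses Proposition \ref{embedecomp} and Lemma \ref{diltwine} together with $P_{\nu;\mr{ac}}=0=P_{\nu;\mr{dil}}$ to get $E_{\nu,\mu}=E_{\nu,\mu}P_{\mu;\mr{vN}}$, and then concludes by the Lemma \ref{L:PosSemi} computation on positive semi-definite elements (which is exactly your parenthetical alternative, and also the content of Proposition \ref{L:EProperties}(d) that your main route invokes). The only difference is packaging, not substance.
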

\begin{proof}
	By Proposition \ref{embedecomp}, $E_{\nu, \mu}P_{\mu;\rm{ac}}=P_{\nu;\rm{ac}}E_{\nu, \mu}$.
	By Lemma \ref{diltwine}, $E_{\nu, \mu}P_{\mu;\rm{dil}}=P_{\nu;\rm{dil}}E_{\nu, \mu}P_{\mu;\rm{dil}}$.
	As $\nu$ is of von Neumann type, we know that $P_{\nu;\rm{dil}}=0=P_{\nu;\rm{ac}}$, and thus
	\[ E_{\nu, \mu}=E_{\nu, \mu}(P_{\mu;\rm{ac}}+P_{\mu;\rm{dil}}+P_{\mu;\rm{vN}})=E_{\nu, \mu}P_{\mu;\rm{vN}}. \]
	Thus, for any positive semi-definite $c\in\mathscr{A}_d$, we have
	\begin{eqnarray*}
		\nu(c)&=& \ip{I + N_\nu}{\pi_\nu(c)(I + N_\nu)} \\ 
		&=& \ip{I + N_\mu}{E_{\nu, \mu}^*\pi_\nu(c)E_{\nu, \mu}(I + N_\mu)} \\
		&=& \ip{P_{\mu;\rm{vN}}(I + N_\mu)}{E_{\nu, \mu}^*\pi_\nu(c)E_{\nu, \mu}P_{\mu;\rm{vN}}(I + N_\mu)}.
	\end{eqnarray*}
	By Lemma \ref{L:PosSemi}, we have $E_{\nu, \mu}^*\pi_\nu(c)E_{\nu, \mu}\leq \pi_\mu(c)$, and thus
	\[
		\nu(c)\leq \ip{P_{\mu;\rm{vN}}(I + N_\mu)}{\pi_\mu(c)P_{\mu;\rm{vN}}(I + N_\mu)}=\mu_{\rm{vN}}(c)
	\]
	That is, $\nu\leq \mu_{\rm{vN}}$.
\end{proof}

\begin{thm} \label{dilvNcone}
	Suppose $\mu,\lambda\in\posncm$ satisfy $\mu\leq\lambda$.
	Let $E : \hardy (\la ) \to \hardy (\mu )$ denote the  contractive co-embedding.
	Then, $E P _{\la ;vN} = P_{\mu ; vN} E$, and $E P _{\la ; dil} = P _{\mu ; dil} E$ and the
 sets of positive NC measures of dilation and von Neumann type are positive hereditary cones.
\end{thm}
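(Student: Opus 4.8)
The plan is to deduce the theorem by assembling the intertwining lemmas already proved for the individual structure projections, and then to upgrade those ``one-sided'' intertwining relations to genuine commutation relations using Proposition \ref{L:EProperties}. First I would recall that by Proposition \ref{embedecomp} we already have the full commutation relations $E P_{\la;\mr{ac}} = P_{\mu;\mr{ac}} E$ and $E P_{\la;\mr{s}} = P_{\mu;\mr{s}} E$, so it suffices to analyze what happens inside the singular summand; by the now-familiar trick (used in the proof of Lemma \ref{diltwine}) one may assume without loss of generality that both $\mu$ and $\la$ are singular, i.e.\ that $\Pi_\mu$ and $\Pi_\la$ are Cuntz and $P_{\la;\mr{dil}} + P_{\la;\mr{vN}} = I$ (and likewise for $\mu$). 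Under this reduction, $\ft=\mr{dil}$ and $\fu=\mr{vN}$ are complementary types in the sense that $P_{\nu;\mr{dil}}^\perp = P_{\nu;\mr{vN}}$ for every singular $\nu$.

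Next I would invoke Lemma \ref{L:vNContain}: it shows that if $\nu \leq \mu$ and $\nu$ is of von Neumann type then $\nu \leq \mu_{\mr{vN}}$. Applying this to the von Neumann parts, whenever $\mu \leq \la$ are singular we get $\mu_{\mr{vN}} \leq \la$ and $\mu_{\mr{vN}}$ is von Neumann type, hence $\mu_{\mr{vN}} \leq \la_{\mr{vN}}$. Symmetrically, Lemma \ref{diltwine} together with Proposition \ref{L:EProperties}(a) shows $\mr{dil}$ is a hereditary type, so $\mu_{\mr{dil}}$ is dilation type and $\mu_{\mr{dil}} \leq \la$; I would then want the analogue $\mu_{\mr{dil}} \leq \la_{\mr{dil}}$. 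The cleanest route is Proposition \ref{L:EProperties}(d) applied with $\ft = \mr{dil}$: Lemma \ref{diltwine} gives $P_{\mu;\mr{dil}} E_{\mu,\la} P_{\la;\mr{dil}} = P_{\mu;\mr{dil}} E_{\mu,\la}$ (since the left factor kills the off-diagonal block), and $\mu_{\mr{dil}}$ is of type $\mr{dil}$, so (d) yields $\mu_{\mr{dil}} \leq \la_{\mr{dil}}$. Thus we have verified condition (ii) of the Remark following Lemma \ref{L:DirectSum} with $\fw = \mr{s}$, $\ft = \mr{dil}$, $\fu = \mr{vN}$; equivalently, by Proposition \ref{L:EProperties}(c), we obtain the full commutation relation $E P_{\la;\mr{dil}} = P_{\mu;\mr{dil}} E$, and taking orthogonal complements within the singular summand, $E P_{\la;\mr{vN}} = P_{\mu;\mr{vN}} E$.

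Finally I would remove the reduction: for general $\mu \leq \la$, factor $E = E_{\mu,\la}$ through the singular parts using $E P_{\la;\mr{s}} = P_{\mu;\mr{s}} E$ and $E \Pi_\la^\beta = \Pi_\mu^\beta E$, i.e.\ write $E|_{\nbran P_{\la;\mr{s}}}$ as (a unitary conjugate of) $E_{\mu_s,\la_s}$, apply the singular case there, and pull back; since $P_{\la;\mr{dil}}$ and $P_{\la;\mr{vN}}$ are subprojections of $P_{\la;\mr{s}}$ this gives $E P_{\la;\mr{dil}} = P_{\mu;\mr{dil}} E$ and $E P_{\la;\mr{vN}} = P_{\mu;\mr{vN}} E$ in general. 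The hereditary-cone statement is then immediate from Proposition \ref{L:EProperties}(b): the commutation relations hold for all $\mu \leq \la$, so both $\mr{dil}$ and $\mr{vN}$ determine positive hereditary cones. I expect the main obstacle to be the bookkeeping in the reduction to the singular case — making sure the identification of $E_{\mu,\la}$ restricted to $\nbran P_{\la;\mr{s}}$ with $E_{\mu_s,\la_s}$ is done carefully enough that the structure projections $P_{\cdot;\mr{dil}}$ and $P_{\cdot;\mr{vN}}$ correspond correctly under this identification — but this is exactly the type of argument already carried out in Lemma \ref{diltwine}, so it should go through.
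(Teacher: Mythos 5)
Your overall strategy --- reduce to the singular case, establish the two order inequalities $\mu_{\mr{vN}} \leq \la_{\mr{vN}}$ and $\mu_{\mr{dil}} \leq \la_{\mr{dil}}$, and then invoke Proposition \ref{L:EProperties}(c) to convert these into the commutation relations --- is sound in outline, and your derivation of the von Neumann inequality from Lemma \ref{L:vNContain} is correct. The gap is in the dilation inequality. You claim that Lemma \ref{diltwine} gives
\[ P_{\mu;\mr{dil}} E_{\mu,\la} P_{\la;\mr{dil}} = P_{\mu;\mr{dil}} E_{\mu,\la}, \]
but Lemma \ref{diltwine} gives the transposed relation $E_{\mu,\la} P_{\la;\mr{dil}} = P_{\mu;\mr{dil}} E_{\mu,\la} P_{\la;\mr{dil}}$. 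In block form with respect to $\nbran P_{\cdot;\mr{dil}} \oplus \nbran P_{\cdot;\mr{vN}}$ (in the singular case), Lemma \ref{diltwine} says the $(\mr{vN},\mr{dil})$ corner $P_{\mu;\mr{vN}} E P_{\la;\mr{dil}}$ vanishes, whereas the identity you need says the $(\mr{dil},\mr{vN})$ corner $P_{\mu;\mr{dil}} E P_{\la;\mr{vN}}$ vanishes, i.e.\ that $E$ maps $\nbran P_{\la;\mr{vN}}$ into $\nbran P_{\mu;\mr{vN}}$. That range inclusion is precisely the non-trivial content of the theorem, so as written the argument begs the question. (Note also that $\mu_{\mr{vN}} \leq \la_{\mr{vN}}$ together with $\mu \leq \la$ does not formally yield $\mu_{\mr{dil}} \leq \la_{\mr{dil}}$ by subtraction, since a difference of positive NC measures need not be positive.)

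The paper closes exactly this gap with a separate argument: it sets $x = P_{\la;\mr{vN}} E^*(I + N_\mu)$, observes $\la_x \leq \mu \leq \la$, shows $\la_x$ is of von Neumann type (Lemma \ref{munotvN}) and hence $\la_x \leq \mu_{\mr{vN}}$ by Lemma \ref{L:vNContain}; testing against vectors $a + N_\mu$ then gives the operator inequality $E P_{\la;\mr{vN}} E^* \leq P_{\mu;\mr{vN}}$, and the Douglas factorization lemma yields $\nbran \left( E P_{\la;\mr{vN}} \right) \subseteq \nbran P_{\mu;\mr{vN}}$, which is the missing block identity. (A repair closer to your plan is possible: in the singular case $\Pi_\la$ is Cuntz, so by Lemma \ref{Cuntztwine} $E^*$ is itself an intertwiner, and running the argument of Lemmas \ref{ACtwine} and \ref{diltwine} on $E^*$, together with the description of $\nbran P_{\mr{vN}}^\perp$ in Lemma \ref{vNvsdil}, gives $E^* \nbran P_{\mu;\mr{dil}} \subseteq \nbran P_{\la;\mr{dil}}$, whose adjoint is the identity you wanted --- but this is an additional argument, not a consequence of Lemma \ref{diltwine} as stated.) The rest of your proposal --- the reduction to the singular case via the unitaries $U_\mu, U_\la$ and the appeal to Proposition \ref{L:EProperties}(b) for the hereditary-cone statement --- matches the paper.
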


\begin{proof}
	Assume first that $\mu$ and $\lambda$ are singular.
	Set $x=P_{\lambda;\rm{vN}}E^*(I + N_\mu)\in \mathbb{H}^2_d(\lambda)$.
	Plainly, $\lambda_x\leq\mu\leq\lambda$, where, as before, $\la _x (L ^\om ) := \ip{x}{\Pi _\la ^\om x}_\la$.
	By Lemma \ref{munotvN}, we see that $\lambda_x$ is of von Neumann type,
	and thus by Lemma \ref{L:vNContain} we see that $\lambda_x\leq \mu_{\rm{vN}}$. Then, for any $a \in \A$,
	\[ \ip{ (a+N_\mu)}{EP_{\lambda;\rm{vN}}E^*(a+N_\mu)}=\lambda_x(a^*a)\leq \mu_{\rm{vN}}(a^*a)=\ip{a+N_\mu}{P_{\mu;\rm{vN}}(a+N_\mu)}, \]
	 whence
	\[ E P_{\lambda;\rm{vN}}E^*\leq P_{\mu;\rm{vN}}. \]
	Let $Q$ be the projection onto the range of $E P_{\la ; \rm{vN}}$.
	Applying the Douglas factorization lemma then yields $\nbran E P _{\la ; \rm{vN}}  \subseteq \nbran P _{\mu ; \rm{vN}}$.
	In particular, $Q \leq P _{\mu ; vN}$, and it follows that 
	\be \label{vNtwineeq} P _{\mu ; \rm{vN}} E P _{\la ;\rm{vN}} = P_{\mu ; \rm{vN}} Q E P _{\la ; \rm{vN}} = Q E P _{\la; \rm{vN} } = E P _{\la ; \rm{vN}}. \ee
	As $\mu$ and $\lambda$ are assumed singular, we have $P _{\la ; \rm{dil}} = P_{\la ;\rm{vN}} ^\perp$ (and similarly for $\mu$).
	Then, 
	\begin{align*} P_{\mu ; \rm{vN}} E & =  P_{\mu ; \rm{vN}} E ( P _{\la ; \rm{dil} } + P _{\la ; \rm{vN} } )  && \\ 
		& =  \underbrace{P _{\mu ; \rm{vN}} P_{\mu ; \rm{dil}}}_{=0} E P_{\la ; \rm{dil}} + P_{\mu ; \rm{vN}} E P _{\la ; \rm{vN}} && \mbox{by Lemma \ref{diltwine}}  \\ 
		& =  E P_{\la ; \rm{vN}}  && \mbox{by Equation (\ref{vNtwineeq})}.
	\end{align*}
	As $P_{\lambda;\rm{dil}}=I-P_{\lambda;\rm{vN}}$, the theorem is proved in the case where $\lambda$ and $\mu$ are singular.

	In the general case, where $\lambda$ and $\mu$ are not necessarily singular, we note that $\mu\leq\lambda$ implies $\mu_s\leq \lambda_s$, thus $E_{\mu_{\rm{s}},\lambda_{\rm{s}}}P_{\lambda_{\rm{s}};\rm{vN}}=P_{\mu_{\rm{s}};\rm{vN}}E_{\mu_{\rm{s}}, \lambda_{\rm{s}}}$.
	As seen in the proof of Proposition \ref{L:EProperties}, there are unitary intertwiners $U_{\mu}:\mathbb{H}^2_d(\mu)\to\mathbb{H}^2_d(\mu_{\rm{ac}})\oplus \mathbb{H}^2_d(\mu_{\rm{s}})$ and $U_{\lambda}:\mathbb{H}^2_d(\lambda)\to\mathbb{H}^2_d(\lambda_{\rm{ac}})\oplus \mathbb{H}^2_d(\lambda_{\rm{s}})$ such that
	\[ U_{\mu}EU_{\lambda}^*=\begin{bmatrix} E_{\mu_{\rm{ac}}, \lambda_{\rm{ac}}} & 0 \\ 0 & E_{\mu_{\rm{s}}, \lambda_{\rm{s}}} \end{bmatrix}. \]
	Since $\mu _s = \mu _{\mr{vN}} + \mu _{\mr{dil}}$ and $\hardy (\mu _{\mr{s}}) \simeq \hardy ( \mu _{\mr{vN}} ) \oplus \hardy (\mu _{\mr{dil}})$, it follows that $U_{\mu}P_{\mu;\rm{vN}}U_{\mu}^*=0\oplus P_{\mu_{\rm{s}};\rm{vN}}$ and a similar formula holds for $\lambda$.
	Thus,
	\[ U_{\mu}P_{\mu;\rm{vN}}EU_{\lambda}^*=(U_{\mu}P_{\mu;\rm{vN}}U_{\mu}^*)(U_{\mu}EU_{\lambda}^*)=(U_{\mu}EU_{\lambda}^*)(U_{\lambda}P_{\lambda;\rm{vN}}U_{\lambda}^*)=U_{\mu}EP_{\lambda;\rm{vN}}U_{\lambda}^*. \]
	That is, $P_{\mu;\rm{vN}}E=EP_{\lambda;\rm{vN}}$.
	As $P_{\mu;\rm{ac}}E=EP_{\lambda;\rm{ac}}$, we have
	\[ P_{\mu;\rm{dil}}E=(I-P_{\mu;\rm{ac}}-P_{\mu;\rm{vN}})E=E(I-P_{\lambda;\rm{ac}}-P_{\lambda;\rm{vN}})=EP_{\lambda;\rm{dil}}. \]
	It now follows from Proposition \ref{L:EProperties}(b) that the dilation--type and von Neumann--type positive NC measures form hereditary cones.
\end{proof}

The following result refines our NC Lebesgue decomposition, \cite[Section 8]{JM-ncld}, by further decomposing any positive and singular NC measure into positive dilation--type and von Neumann--type NC measures:

\begin{cor}[NC Kennedy--Lebesgue--von Neumann decomposition]\label{refncld}
	Any positive NC measure $\mu \in \posncm$ has a unique \emph{NC Kennedy--Lebesgue--von Neumann decomposition}, 
	$$ \mu = \mu _{ac} + \mu _{dil} + \mu _{vN}, $$ where $\mu _{ac}, \mu_{dil}, \mu _{vN} \in \posncm$ are positive NC measures of absolutely continuous--type, dilation--type and von Neumann--type, respectively. The absolutely continuous, dilation--type and von Neumann--type positive NC measures each form a positive hereditary cone.
	If $\nu_1,\nu_2,\nu_3$ are, respectively, absolutely continuous, dilation--type and von Neumann--type positive NC measures, and $\mu=\nu_1+\nu_2+\nu_3$, then
	\[ \nu_1=\mu_{\rm{ac}},\quad \nu_2=\mu_{\rm{dil}} \quad \mbox{and} \quad \nu_3=\mu_{\rm{vN}}. \]
	Moreover, if $\ft\in\{\rm{ac},\rm{dil},\rm{vN}\}$, then for any $\nu,\lambda\in\posncm$
	\[ (\nu+\lambda)_\ft=\nu_\ft+\lambda_\ft. \]
\end{cor}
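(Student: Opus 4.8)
The plan is to bootstrap the desired three--term decomposition from the two--term NC Lebesgue decomposition $\mu = \mu_{\rm{ac}} + \mu_{\rm{s}}$ of \cite[Section 8]{JM-ncld} by splitting the singular part, and then to deduce uniqueness, the cone structure, and additivity from Lemma \ref{L:DirectSum} applied in two regimes.

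First I would record the compatibility of the structure projections with passage to the singular part. Since Kennedy's decomposition of a row isometry reads $\Pi = \Pi_L \oplus \Pi_{\rm{ACC}} \oplus \Pi_{\rm{dil}} \oplus \Pi_{\rm{vN}}$, on $\hardy(\mu)$ we have $P_{\mu;\rm{s}} = P_{\mu;\rm{dil}} + P_{\mu;\rm{vN}}$ and $P_{\mu;\rm{ac}} + P_{\mu;\rm{s}} = I$. The comments following Definition \ref{D:Types} show that $\Pi_{\mu_{\rm{s}}}$ is unitarily equivalent, via $E_{\mu_{\rm{s}},\mu}^*$, to the restriction of $\Pi_\mu$ to $\nbran P_{\mu;\rm{s}} = \nbran P_{\mu;\rm{dil}} \oplus \nbran P_{\mu;\rm{vN}}$; since the pure, AC Cuntz, dilation-- and von Neumann--type summands are canonical (and unitary equivalences intertwine the corresponding projections, as noted in the remark following Lemma \ref{vNvsdil}), this unitary carries $P_{\mu_{\rm{s}};\rm{dil}}$ and $P_{\mu_{\rm{s}};\rm{vN}}$ onto $P_{\mu;\rm{dil}}$ and $P_{\mu;\rm{vN}}$. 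Hence $(\mu_{\rm{s}})_{\rm{dil}} = \mu_{\rm{dil}}$, $(\mu_{\rm{s}})_{\rm{vN}} = \mu_{\rm{vN}}$, $(\mu_{\rm{s}})_{\rm{ac}} = 0$ and $(\mu_{\rm{s}})_{\rm{s}} = \mu_{\rm{s}}$ for every $\mu \in \posncm$.

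Next I would apply Lemma \ref{L:DirectSum} twice. With $\fw = \rm{all}$, $\ft = \rm{ac}$ and $\fu = \rm{s}$ the hypotheses hold: $P_{\nu;\rm{ac}}^\perp = P_{\nu;\rm{s}}$ always, and $E_{\mu,\la} P_{\la;\rm{ac}} = P_{\mu;\rm{ac}} E_{\mu,\la}$ whenever $\mu \leq \la$ by Proposition \ref{embedecomp}. This gives (1) if $\mu = \sigma_1 + \sigma_2$ with $\sigma_1$ absolutely continuous and $\sigma_2$ singular, then $\sigma_1 = \mu_{\rm{ac}}$ and $\sigma_2 = \mu_{\rm{s}}$; and (2) $(\nu + \la)_{\rm{ac}} = \nu_{\rm{ac}} + \la_{\rm{ac}}$ and $(\nu + \la)_{\rm{s}} = \nu_{\rm{s}} + \la_{\rm{s}}$ for all $\nu, \la \in \posncm$. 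With $\fw = \rm{s}$, $\ft = \rm{dil}$ and $\fu = \rm{vN}$ the hypotheses again hold: for singular $\nu$ we have $P_{\nu;\rm{dil}}^\perp = P_{\nu;\rm{vN}}$, and $E_{\mu,\la}P_{\la;\rm{dil}} = P_{\mu;\rm{dil}} E_{\mu,\la}$ whenever $\mu \leq \la$ (a fortiori for singular $\mu \leq \la$) by Theorem \ref{dilvNcone}. This gives (3) if a singular $\sigma = \tau_1 + \tau_2$ with $\tau_1$ dilation--type and $\tau_2$ von Neumann--type, then $\tau_1 = \sigma_{\rm{dil}}$ and $\tau_2 = \sigma_{\rm{vN}}$; and (4) $(\sigma + \sigma')_{\rm{dil}} = \sigma_{\rm{dil}} + \sigma'_{\rm{dil}}$ and $(\sigma + \sigma')_{\rm{vN}} = \sigma_{\rm{vN}} + \sigma'_{\rm{vN}}$ for singular $\sigma, \sigma'$ (using that the singular measures, and separately the dilation-- and von Neumann--type measures, form cones, so all measures appearing are of the required type $\fw$).

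Finally I would assemble the statement. Existence: $\mu = \mu_{\rm{ac}} + \mu_{\rm{s}}$ and $\mu_{\rm{s}} = (\mu_{\rm{s}})_{\rm{dil}} + (\mu_{\rm{s}})_{\rm{vN}} = \mu_{\rm{dil}} + \mu_{\rm{vN}}$ by the first paragraph. Uniqueness and identification: if $\mu = \nu_1 + \nu_2 + \nu_3$ with the prescribed types, then $\nu_2 + \nu_3$ is singular (dilation-- and von Neumann--type measures are singular, and the singular cone is closed under addition), so (1) forces $\nu_1 = \mu_{\rm{ac}}$ and $\nu_2 + \nu_3 = \mu_{\rm{s}}$; then (3) applied to $\mu_{\rm{s}} = \nu_2 + \nu_3$ forces $\nu_2 = (\mu_{\rm{s}})_{\rm{dil}} = \mu_{\rm{dil}}$ and $\nu_3 = (\mu_{\rm{s}})_{\rm{vN}} = \mu_{\rm{vN}}$. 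The hereditary--cone assertions are Proposition \ref{embedecomp} (for $\rm{ac}$) and Theorem \ref{dilvNcone} (for $\rm{dil}$ and $\rm{vN}$). For the last additivity statement, the $\rm{ac}$ case is (2), and for $\ft \in \{\rm{dil}, \rm{vN}\}$ one has $(\nu + \la)_\ft = ((\nu + \la)_{\rm{s}})_\ft = (\nu_{\rm{s}} + \la_{\rm{s}})_\ft = (\nu_{\rm{s}})_\ft + (\la_{\rm{s}})_\ft = \nu_\ft + \la_\ft$, using the first paragraph, then (2), then (4), then the first paragraph again. The only step carrying genuine content is the structure--projection compatibility of the first paragraph; everything else is bookkeeping on top of Proposition \ref{embedecomp}, Theorem \ref{dilvNcone} and Lemma \ref{L:DirectSum}, so I anticipate no serious obstacle.
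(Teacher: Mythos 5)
Your proposal is correct and follows essentially the same route as the paper's proof: the hereditary--cone claims come from Proposition \ref{embedecomp} and Theorem \ref{dilvNcone}, uniqueness comes from two applications of Lemma \ref{L:DirectSum}(i) (first splitting off the absolutely continuous part, then splitting the singular remainder into dilation and von Neumann parts, using $(\mu_{\rm{s}})_{\rm{vN}}=\mu_{\rm{vN}}$ via Lemma \ref{vNvsdil}), and additivity comes from Lemma \ref{L:DirectSum}(ii). You simply spell out more explicitly than the paper the reduction of the general additivity statement to the singular case.
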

\begin{proof}
	It is already known from \cite{JM-ncld} that the absolutely continuous positive NC measures form a hereditary cone and the fact that the dilation-- and von Neumann--type positive NC measures form hereditary cones was proven in Theorem \ref{dilvNcone}.
	The additivity of $(\cdot)_\ft$ follows from Theorem \ref{dilvNcone} and Lemma \ref{L:DirectSum}(ii).
	That $\nu_1=\lambda_{\rm{ac}}$ follows from Proposition \ref{embedecomp}, leaving $\nu_2+\nu_3=\lambda_{\rm{s}}$.
	From this and Lemma \ref{L:DirectSum}(i), we have $\nu_3=(\lambda_{\rm{s}})_{\rm{vN}}=\lambda_{\rm{vN}}$, the second equality following from Lemma \ref{vNvsdil}.
	It now follows that $\nu_2=\lambda_{\rm{dil}}$.
\end{proof}

\begin{eg}
	In contrast to the types just discussed, the set of Cuntz--type positive NC measures is not a cone. 
	For a univariate example, consider normalized Lebesgue measure $\theta$ on the upper and lower half-circles $\partial \D_+$ and $\partial \D_-$, which sum to normalized Lebesgue measure.
	Both of these measures have the property that their `GNS representations' are unitary.
	One can use this to construct a (perhaps somewhat trivial) multi-variable example by setting 
	\[
		\mu_\pm(L^\alpha)=\begin{cases}
			\int_{\partial \D_\pm}\zeta^k d\theta(\zeta) & L^\alpha=L_1^k \\
			0 & \text{otherwise}, \end{cases}
	\]
	for each word $\alpha$.
	To see that $\mu_\pm$ is Cuntz, it suffices to note that $\Pi_{\mu_{\pm}}$ is unitarily equivalent to $(U_{\pm},0,\ldots,0)$, where $U_{\pm}$ is the unitary operator of multiplication by $\zeta$ on $L^2(\partial \D_{\pm})$.
\end{eg}
\begin{eg}
	On the other hand the set of pure positive NC measures is not hereditary.
	The example \cite[Example 2.11]{DLP-ncld}, based on \cite[Example 3.2]{DP-inv}, provides a row isometry, $\Pi = (\Pi _1 , \cdots , \Pi _d ) : \cH \otimes \C ^d \rightarrow \cH$, on a separable Hilbert space $\cH$ so that $\Pi$ is Cuntz (surjective) and there is a bounded intertwiner, $X : \bH ^2 _d \rightarrow \cH$, 
	$$ X L_k = \Pi _k X, \quad k=1,\ldots,d, $$
	so that $X$ has dense range and $x := X1 \in \cH$ is $\Pi -$cyclic.
	Set $T := X^* X$.
	Now choose any real number $c >0$ and note that $T +c I \geq cI >0$.
	This is a strictly positive left Toeplitz operator which is bounded below, and hence it is factorizable by \cite[Theorem 1.5]{Pop-entropy}.
	That is, there exists an $F\in\mathfrak{R}_d^\infty$ such that $T+cI=F^*F$ such that $g=F1$ is cyclic for $L$.
	Define the positive NC measures $\mu _T (L^\om ) := \ip{1}{TL^\om 1}_{\bH ^2}$ and $\mu _{T+cI} (L^\om ) := \ip{1}{(T+cI) 1}_{\bH ^2}$.
	Then clearly $\Pi _{\mu _T} \simeq \Pi $, where $\Pi$ is the AC Cuntz row isometry considered above and $\mu _{T} \leq \mu _{T +cI}$.
	Since $g$ is $L$-cyclic
	$$ \mu _{T +cI} (L^\om ) = \ip{1}{F^*F L^\om 1}_{\bH ^2} = \ip{g}{L^\om g}_{\bH ^2}, \quad \om\in\F ^d. $$
	Clearly $p + N _{\mu _{T+cI}} \mapsto p(L) g$ extends to an onto isometry intertwining the GNS row isometry of $\mu _{T +cI}$ and $L$ so that $\mu _{T + cI }$ is of pure type$-L$ and $\mu _{T +c I} \geq \mu _T$ where $\mu _T$ is AC Cuntz. 
\end{eg}

\section{Complex NC measures}

Our goal for the remainder of the paper is to apply the preceding results to study analytic (and complex) NC measures. As for positive NC measures we define:
\begin{defn}
An NC measure $\mu \in \ncm $ is \emph{absolutely continuous} (AC) if it has a weak$-*$ continuous extension to the free Toeplitz system, $\scr{T} _d = \scr{A} _d ^{-\text{weak-}*}$.
\end{defn}

Let $m\in\posncm$ be given by $m (b ) = \ip{1}{b 1}$, $b\in\mathscr{A}_d$.
We call $m$ \textit{NC Lebesgue measure}, and note that it is absolutely continuous.
It plays the role of normalized Lebesgue measure in this NC measure theory \cite{JM-ncFatou,JM-ncld}.

In \cite[Theorem 2.10]{DP-inv}, Davidson--Pitts show that any bounded linear functional on $\A$ that extends weak$-*$ continuously to $\A ^{-\text{weak-}*} = L^\infty _d$ is a vector functional, for $d\geq 2$.
The next lemma shows that their proof extends to our setting.

\begin{lemma} \label{ACvecfun}
	Any absolutely continuous NC measure $\mu \in \ncm$, for $d \geq 2$, is a vector functional.
	That is, if $\mu\in\ncm$ is absolutely continuous, then there exist $f,g\in\hardy$ such that
	\[ \mu(b)=\ip{f}{bg},\quad b\in\mathscr{A}_d. \]
\end{lemma}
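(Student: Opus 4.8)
The plan is to mimic the Davidson--Pitts argument from \cite[Theorem 2.10]{DP-inv} for vector functionals on $\mathfrak{L}^\infty_d$, using the structure theory of $\mathfrak{L}^\infty_d$ as a free semigroup algebra. First I would observe that, since $\mu$ is absolutely continuous, it extends weak$-*$ continuously to the free Toeplitz system $\scr{T}_d = \scr{A}_d^{-\text{weak-}*}$, and in particular its restriction to $\A$ extends weak$-*$ continuously to $\mathfrak{L}^\infty_d = \A^{-\text{weak-}*}$. So it suffices to represent a weak$-*$ continuous functional $\vi$ on $\mathfrak{L}^\infty_d$ as a vector functional $\vi(a) = \ip{f}{a g}$ with $f,g \in \hardy$, and then check that the resulting formula $b \mapsto \ip{f}{bg}$ agrees with $\mu$ on all of $\scr{A}_d$ by weak$-*$ density of $\A + \A^*$ and weak$-*$ continuity of both sides (here one uses that the vector functional $b\mapsto\ip{f}{bg}$ is automatically weak$-*$ continuous on all of $\mc{L}(\hardy)$, hence on $\scr{T}_d$). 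The reduction thus turns the problem into the purely algebraic statement about $\mathfrak{L}^\infty_d$.

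For the core step, I would recall that the predual of $\mathfrak{L}^\infty_d$ is realized inside the trace class on $\hardy$: every weak$-*$ continuous functional $\vi$ on $\mathfrak{L}^\infty_d \subset \mc{L}(\hardy)$ is of the form $\vi(a) = \tr(a T)$ for some trace-class $T$, i.e. $\vi(a) = \sum_n \ip{f_n}{a g_n}$ with $\sum_n \|f_n\|^2 < \infty$ and $\sum_n \|g_n\|^2 < \infty$. The content of the Davidson--Pitts result is that, for $d \geq 2$, this series can be collapsed to a single term: there exist $f, g \in \hardy$ with $\vi(a) = \ip{f}{ag}$. Their proof uses the non-commutative structure crucially — one exploits the abundance of isometries with orthogonal ranges (the $L_k$, and more generally $w(L)$ for distinct words $w$) to ``spread out'' the vectors $g_n$ into orthogonal subspaces: replacing $g_n$ by $w_n(L)^* (\cdots)$-type manipulations, or rather using that one can find $h \in \hardy$ and an element $A \in \mathfrak{L}^\infty_d$, built from a suitable infinite sum $A = \sum_n c_n w_n(R)$ of right shifts with rapidly decaying coefficients along an infinite set of words $\{w_n\}$ with pairwise orthogonal ``tails'', so that $A h$ encodes all the $g_n$ simultaneously; one then sets $g = h$ and absorbs $A$ on the other side, using that left and right multipliers commute. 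I would follow this blueprint essentially verbatim, citing \cite[Theorem 2.10]{DP-inv} for the combinatorial estimates and only indicating the (minor) changes needed because we work with the functional defined a priori on $\scr{A}_d$ rather than on $\mathfrak{L}^\infty_d$.

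The main obstacle is not conceptual but bookkeeping: verifying that the Davidson--Pitts collapsing argument goes through unchanged requires re-examining their use of the right multiplier algebra $\mathfrak{R}^\infty_d$ and the commutation $\mathfrak{R}^\infty_d \subseteq (\mathfrak{L}^\infty_d)'$, and confirming that the infinite-series construction of the auxiliary multiplier $A$ converges in the appropriate sense and lands in $\hardy$ when applied to the vacuum or to the relevant vectors. A secondary point to be careful about is the passage back from $\mathfrak{L}^\infty_d$ to $\scr{A}_d$: one needs that a vector functional agreeing with $\mu$ on $\A$ automatically agrees on $\A^*$ as well, which follows since $\mu(b^*) = \overline{\mu(b)}$ for $b \in \scr{A}_d$ forces $\mu(a^*) = \overline{\mu(a)} = \overline{\ip{f}{ag}} = \ip{ag}{f} = \ip{g}{a^* f}$ for $a \in \A$, so the same pair $(g,f)$ (with roles swapped) works on $\A^*$ — but one must then reconcile this with the single formula $\ip{f}{bg}$, which is handled by noting both the original functional $\mu$ and $b \mapsto \ip{f}{bg}$ are weak$-*$ continuous and agree on the weak$-*$ dense subspace $\A + \A^*$, hence everywhere on $\scr{A}_d$. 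I expect roughly one page, most of it a careful citation of and light adaptation to \cite[Theorem 2.10]{DP-inv}.
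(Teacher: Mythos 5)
There is a genuine gap in your passage back from $\mathfrak{L}^\infty _d$ to $\scr{A} _d$, and it is not the minor bookkeeping issue you describe. Applying Davidson--Pitts to the weak$-*$ continuous extension of $\mu |_{\A}$ produces a pair $f,g$ with $\mu (a) = \ip{f}{ag}$ for $a \in \A$ \emph{only}; since $\A$ is weak$-*$ dense in $\mathfrak{L}^\infty _d$ but not in $\scr{T} _d$, you cannot upgrade agreement on $\A$ to agreement on $\scr{A} _d$ by weak$-*$ continuity. Your proposed fix for the co-analytic half fails on two counts. First, the identity $\mu (b^*) = \overline{\mu (b)}$ says that $\mu$ is self-adjoint ($\mu = \mu ^*$ in the paper's notation), which a general complex NC measure need not be. Second, even for self-adjoint $\mu$, your computation yields $\mu (a^*) = \ip{g}{a^* f}$, i.e.\ the \emph{swapped} pair $(g,f)$ represents $\mu$ on $\A ^*$; this is in general a different vector functional on $\A ^*$ than $b \mapsto \ip{f}{bg}$, and the claimed reconciliation (``both agree on the weak$-*$ dense subspace $\A + \A ^*$'') is circular, since agreement on $\A ^*$ with the unswapped pair is exactly what needs to be proved. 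Note that the pair representing $\mu |_{\A}$ is far from unique --- perturbing $f$ by any vector orthogonal to the closure of $\{ ag \, : \, a \in \A \}$ leaves $\ip{f}{ag}$ unchanged but alters $\ip{f}{a^* g}$ --- so there is genuinely something to arrange here.

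The paper avoids this by running the Davidson--Pitts orthogonal-ranges trick at the level of $\scr{A} _d$ rather than $\A$: it extends $\mu$ to a weak$-*$ continuous functional on all of $\mc{L} (\hardy )$ (via the identification of the predual of $\scr{T} _d$ with a quotient of the trace class, at the cost of an $\eps$ in norm), takes the singular value decomposition $K' = \sum _k s_k \ip{x_k}{\cdot} y_k$ of a trace-class representative, and sets $x = \sum s_k ^{1/2} R^{\om _k} x_k$, $y = \sum s_k ^{1/2} R^{\om _k} y_k$ for words $\om _k$ with $R^{\om _k}$ having pairwise orthogonal ranges. The key point is that $R^{\om _k *} a_1 ^* a_2 R^{\om _j} = \delta _{k,j}\, a_1 ^* a_2$ for $a_1 , a_2 \in \A$, because right multipliers commute with left ones; hence $\ip{x}{a_1 ^* a_2 y} = \mu (a_1 ^* a_2 )$, and the semi-Dirichlet property ($\A ^* \A$ norm dense in $\scr{A} _d$) closes the argument for the whole free disk system in one stroke. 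To repair your write-up you should apply the collapsing construction to a trace-class representative of the extension of $\mu$ to $\scr{T} _d$, not to the restriction of $\mu$ to $\A$.
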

\begin{proof}
	By general considerations, $\mu$ can be extended (with generally $\eps >0$ increase in norm) to a weak$-*$ continuous linear functional, $\hat{\mu}$, acting on $\mc{L} (\mathbb{H} ^2 _d)$.
	Indeed, since $\scr{T} _d = \scr{A} _d ^{-\text{weak-}*}$ is weak$-*$ closed, it can be identified with the annihilator $\scr{S} ^\perp$ where $\scr{S} \subseteq \mr{Tr} (\bH ^2 _d )$ is a norm-closed subspace of the trace-class operators on $\bH ^2 _d$, the pre-dual of $\scr{L} (\bH ^2 _d )$ \cite[Corollary 2.4.11]{analysisnow}.
	Thus, for any $K \in \scr{S}$ and any $a \in \scr{T} _d$, we have $\mr{tr} (K a ) =0$.
	If $q : \mr{Tr} (\bH ^2 _d ) \rightarrow \mr{Tr} (\bH ^2 _d )/\scr{S}$ is the quotient map, then $q^* : \left( \mr{Tr} (\bH ^2 _d )/\scr{S} \right) ^* \to \scr{L} (\bH ^2 _d )$ can be identified with the inclusion map of $\scr{T} _d$ into $\scr{L} (\bH ^2 _d )$ \cite[Proposition 2.4.13]{analysisnow}.
	That is, the pre-dual of $\scr{T} _d$ is isomorphic to $\mr{Tr} (\bH ^2 _d )/\scr{S}$ and linear functionals on $\scr{A} _d$ which extend weak$-*$ continuously to $\scr{T} _d$ can be identified with this quotient space.
	It follows that we can identify $\mu$ with the equivalence class $K + \scr{S}$ for some $K \in \mr{Tr} (\hardy )$.
	Hence, for any $S \in \scr{S}$ and $a \in \scr{T} _d$,
	$$ \mr{tr} ( (K+S) a ) = \mr{tr} (K a) = \mu (a).$$
	Since
	\[ 
	 \| \mu \| = \inf _{S \in \scr{S}} \| K + S \| _{\mr{Tr} (\hardy )},
	\]
	there exists, for any $\eps >0$, an $S' \in \scr{S}$ so that 
	$$ \| K + S' \| _{\mr{Tr} (\hardy)} \leq \| \mu \| + \eps. $$ 
	Set $K ' = K +S'$.
	Then $\hat{\mu}:\mathscr{L}(\hardy)\to \mathbb{C}$ given by
	$$ \hat{\mu} (A) = \mr{tr} (A K'), \quad A\in\mathscr{L}(\hardy), $$
	is a weak$-*$ continuous extension of $\mu$ to $\scr{L} (\hardy )$ with norm $\| \hat{\mu } \| \leq \| \mu \| + \eps$.
	The trace-class operator $K'$ has a singular-value decomposition
	$$ K ' = \sum_{k=1}^\infty  s_k \ip{x_k}{\cdot} y_k; \quad \quad x_k, y_k \in \mathbb{H} ^2 _d, \ s_k \geq 0.$$ 
	Choose any sequence of words $\om _k \in \F ^d$ so that $\nbran R^{\om _k} \perp \nbran R^{\om _j}$ for $k \neq j$.
	For example, one can choose the words $\om _{k+1} = 2^k 1$ for $k \in \N$.
	Then, 
	$$ x:= \sum_{k=1}^\infty s_k ^{1/2} R ^{\om _k} x_k \quad \mbox{and} \quad y:= \sum_{k=1}^\infty s_k ^{1/2} R^{\om _k} y_k $$
	both converge to elements in $\hardy$.
	In what follows, $\delta_{k,j}=0$ when $k\neq j$ and $\delta_{j,j}=1$ for all $j$.
	For any $a_1, a_2 \in \A$, we have
	\ba \ip{x}{a_1^* a_2 y} & = & \sum_{k,j=1}^\infty s_k^{1/2} s_j^{1/2} \ip{R^{\om _k} x_k}{a_1^* a_2 R^{\om _j} y_j}_{\mathbb{H} ^2 _d} \nn \\
	& = & \sum_{k,j=1}^\infty s_k ^{1/2} s_j ^{1/2} \ip{x_k}{a_1^* \underbrace{R ^{\om _k *} R^{\om _j}}_{=\delta _{k,j} I} a_2  y_j}_{\mathbb{H} ^2 _d} \nn \\
	& = & \sum _{k=1}^\infty s_k \ip{x_k}{a_1^* a_2 y_k}_{\mathbb{H} ^2 _d} \nn \\
	& = & \mr{tr} \left( a_1^* a_2 K \right)  \nn \\
	& = & \hat{\mu } (a_1^* a_2) = \mu (a_1^* a_2). \nn \ea
	The lemma now follows from the fact that $\A^*\A$ is norm dense in $\mathscr{A}_d$, 
\end{proof}

Given $\mu\in\ncm$, define $\mu^*\in\ncm$ by
\[ \mu^*(b)=\overline{\mu(b^*)}, \quad b\in\mathscr{A}_d. \]
We also set
\[ \nbre \mu=\frac{\mu + \mu^*}{2}\quad \quad  \mbox{and} \quad \quad \nbim \mu=\frac{\mu - \mu^*}{2i}. \]

\begin{cor} \label{ACWstock}
	Any absolutely continuous NC measure $\mu \in \ncm$ can be decomposed as $\mu = \left( \mu _1 - \mu _2 \right) +i \left( \mu _3 - \mu _4 \right)$ where each $\mu _k \geq 0$ is AC.
	In particular, $ \mu _1 + \mu _2 + \mu _3 + \mu _4$ is AC.  
\end{cor}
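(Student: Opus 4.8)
The plan is to reduce to the concrete vector‑functional description of absolutely continuous NC measures furnished by Lemma \ref{ACvecfun}, and then apply a polarization identity. I will assume $d \ge 2$, so that Lemma \ref{ACvecfun} applies; for $d = 1$ the free disk system is completely isometrically isomorphic to $C(\T)$ with $\scr{T} _d \cong L^\infty(\T)$, so the statement is just the classical Hahn decomposition of an absolutely continuous complex measure on the circle into a real‑linear combination of at most four absolutely continuous positive measures (and this case is also covered by the alternative argument mentioned below).

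So let $\mu \in \ncm$ be absolutely continuous. By Lemma \ref{ACvecfun} there are $f, g \in \hardy$ with $\mu(b) = \ip{f}{bg}$ for all $b \in \scr{A} _d$. I would then set $h_k := f + i^k g \in \hardy$ for $k = 0,1,2,3$ and define $\nu_k(b) := \ip{h_k}{b\, h_k}$. Each $\nu_k$ is a positive NC measure, since $\nu_k(b) \ge 0$ whenever $b \in \scr{A} _d$ is positive semi‑definite. The polarization identity for the sesquilinear form $(x,y) \mapsto \ip{x}{b y}$ — which is conjugate‑linear in the first slot, matching the paper's convention — gives $\mu(b) = \ip{f}{bg} = \tfrac14 \sum_{k=0}^3 i^{-k}\,\nu_k(b)$ for every $b \in \scr{A} _d$. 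Separating the terms with real and imaginary coefficients and absorbing the factor $\tfrac14$ into the measures then yields $\mu = (\mu_1 - \mu_2) + i(\mu_3 - \mu_4)$, where each $\mu_k$ is a positive scalar multiple of one of the $\nu_k$ and hence $\mu_k \ge 0$.

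It remains to verify that each $\mu_k$ is absolutely continuous, that is, that a vector functional $b \mapsto \ip{h}{bh}$ on $\scr{A} _d$ has a weak$-*$ continuous extension to $\scr{T} _d$. This is immediate: for fixed $h \in \hardy$ the functional $A \mapsto \ip{h}{Ah}$ on $\scr{L}(\hardy)$ is given by a rank‑one, hence trace‑class, operator, so it is weak$-*$ continuous on $\scr{L}(\hardy)$; since $\scr{T} _d = \scr{A} _d^{-\text{weak-}*}$ is weak$-*$ closed in $\scr{L}(\hardy)$, the restriction of this functional to $\scr{T} _d$ is weak$-*$ continuous and agrees with $\nu_k$ on $\scr{A} _d$. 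Thus each $\mu_k$ is AC. For the final assertion I would simply note that the AC NC measures form a linear subspace of $\ncm$ — a sum of weak$-*$ continuous extensions of the $\mu_k$ is a weak$-*$ continuous extension of their sum — so $\mu_1 + \mu_2 + \mu_3 + \mu_4$ is AC as well.

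I do not expect a genuine obstacle: once Lemma \ref{ACvecfun} is available, the proof is the polarization bookkeeping together with the elementary fact that a vector functional is weak$-*$ continuous. The only points needing care are keeping the polarization signs consistent with the inner‑product convention and with the definition of the $\nu_k$, and stating correctly the reduction of the absolute‑continuity condition from $\scr{T} _d$ to $\scr{L}(\hardy)$. If one wants an argument uniform in $d$ (covering $d = 1$) and independent of Lemma \ref{ACvecfun}, one can instead extend $\mu$ to a normal functional $\hat\mu$ on $\scr{L}(\hardy)$ exactly as in the proof of Lemma \ref{ACvecfun}, split $\hat\mu$ into self‑adjoint real and imaginary parts, apply the Jordan decomposition of normal functionals on a von Neumann algebra to each, and restrict back to $\scr{A} _d$; but the polarization route is shorter in the present context.
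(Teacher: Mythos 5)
Your proposal is correct and follows essentially the same route as the paper: invoke Lemma \ref{ACvecfun} to realize $\mu$ as a vector functional $b\mapsto\ip{f}{bg}$, then polarize into the four positive vector states $m_{f\pm g}$, $m_{f\pm ig}$ (the paper writes this as $\la\pm\nbre\mu$ and $\la\pm\nbim\mu$ with $\la=\tfrac12(m_f+m_g)$, which yields the same four measures up to scalars), each of which is weak$-*$ continuous. Your explicit justification that a vector state is AC, and your separate handling of $d=1$, are details the paper leaves implicit but are consistent with it.
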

\begin{proof}
	Applying Lemma \ref{ACvecfun} to $\mu$, we obtain vectors $x,y\in\hardy$ such that $\mu(b)=\ip{x}{by}$ for $b\in\mathscr{A}_d$.
	Set $\la := \frac{1}{2} \left( m_x + m_y \right)$, where,
	$$ m_h (b) := \ip{h}{b h}, \quad b\in\mathscr{A}_d,\; h\in\hardy. $$ 
	Then we observe that, for any positive semi-definite $c\in\mathscr{A}_d$, 
	$$ 2 \la (c) \pm 2 \nbre \mu (c) = m _{x \pm y} (c ) \geq 0, $$ 
	and similarly, 
	$$ 2 \la (c) \pm 2 \nbim \mu (c) = m_{x \pm i y } (c) \geq 0. $$ 
	Therefore, we can set $\mu _1 = \la + \nbre \mu $, $\mu _2 = \la - \nbre \mu$, etc. 
\end{proof}

\subsection{General Wittstock decomposition}

Any $\mu \in \ncm$ can be written as a linear combination of four positive NC measures, 
$$ \mu  = \left( \mu _1 - \mu _2 \right) + i \left( \mu _3 - \mu _4 \right); \quad \quad \mu _k \in \posncm, $$ where 
$$ \nbre \mu = \frac{\mu + \mu ^*}{2} = \mu _1 - \mu _2, \quad \mbox{and} \quad \nbim \mu = \frac{\mu - \mu ^* }{2i} = \mu _3 - \mu _4. $$
This also works for operator-valued NC measures, \emph{i.e.} operator-valued completely bounded maps on the free disk system, by the Wittstock decomposition theorem \cite{Wittstock} \cite[Theorem 8.5]{Paulsen}.  

\begin{defn}
	If $\vec{\la} := ( \la _1 , \la _2 , \la _3 , \la _4 ) \subset (\scr{A} _d ) ^{\dag ; \, 4} _+$ is such that $\mu = (\la _1 - \la _2 ) +i (\la _3 - \la _4 )$, then we call $\vec{\la}$ a \emph{Wittstock decomposition of $\mu$}.
	The set of all Wittstock decompositions of $\mu$ is denoted by $\scr{W} (\mu )$.
	Given a Wittstock decomposition $\vec{\la} \in \scr{W} (\mu)$, the \emph{total variation of $\mu$ with respect to $\vec{\la}$} is 
	$$ |\vec{\lambda}| := \la _1 + \la _2 + \la _3 + \la _4 \geq 0. $$ 
\end{defn}
\begin{remark}
	The total variation $|\vec{\la}|$ depends on the choice of Wittstock decomposition of $\mu$ and this is not uniquely determined by $\mu$.
	Indeed, if $\vec{\la} = ( \la _k )_{k=1}^4$ is any Wittstock decomposition of $\mu$, then so is $\vec{\mu}$ where $\mu _1 = \la _1 + \ga$, $\mu _2 = \la _2 + \ga$ and $\la _3 = \mu _3$, $\la _4 = \mu _4$, for any $\ga \in \posncm$.
	This is a rather trivial example of non-uniqueness and in this case $|\vec{\mu}| = |\vec{\la}| + 2 \ga \geq |\vec{\la}|$.
	However, even if $\vec{\mu}, \vec{\la} \in \scr{W} (\mu )$ are two different Wittstock decompositions of $\mu \in \ncm$ so that $|\vec{\la}| \leq |\vec{\mu}|$, this need not imply that $\mu _k \geq \la _k$ for each $k$.
	That is, one could have, for example, that $\mu _1 - \mu _2 = \la _1 - \la _2$ without having $\mu _1 = \la _1 + \ga$ and $\mu _2 = \la _2 + \ga$ for some $\ga \in \posncm$.
\end{remark}

\begin{prop}\label{P:ACiffTVAC}
	Let $\mu\in\ncm$.
	Then, $\mu$ is absolutely continuous if and only if there exists a $\vec{\mu}\in\mathscr{W}(\mu)$ such that $|\vec{\mu}|$ is absolutely continuous.
\end{prop}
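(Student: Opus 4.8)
The plan is to prove the two implications separately, and neither should require much beyond results already in hand.

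For the forward implication, suppose $\mu \in \ncm$ is absolutely continuous. Then Corollary~\ref{ACWstock} supplies positive NC measures $\mu_1,\mu_2,\mu_3,\mu_4$, each absolutely continuous, with $\mu = (\mu_1 - \mu_2) + i(\mu_3 - \mu_4)$, so that $\vec{\mu} := (\mu_1,\mu_2,\mu_3,\mu_4)$ is a Wittstock decomposition of $\mu$; that same corollary asserts that $|\vec{\mu}| = \mu_1 + \mu_2 + \mu_3 + \mu_4$ is absolutely continuous. Thus this direction is immediate once Corollary~\ref{ACWstock} is granted.

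For the reverse implication, I would begin with a Wittstock decomposition $\vec{\mu} = (\mu_1,\mu_2,\mu_3,\mu_4) \in \scr{W}(\mu)$ for which $|\vec{\mu}|$ is absolutely continuous. Since $0 \le \mu_k \le |\vec{\mu}|$ for each $k$ and, by Proposition~\ref{embedecomp}, the positive absolutely continuous NC measures form a hereditary cone, each $\mu_k$ is absolutely continuous. Picking a weak$-*$ continuous extension $\hat{\mu}_k$ of $\mu_k$ to the free Toeplitz system $\scr{T}_d$, the functional $\hat{\mu} := (\hat{\mu}_1 - \hat{\mu}_2) + i(\hat{\mu}_3 - \hat{\mu}_4)$ is weak$-*$ continuous on $\scr{T}_d$ and restricts on $\scr{A}_d$ to $(\mu_1 - \mu_2) + i(\mu_3 - \mu_4) = \mu$, so $\mu$ is absolutely continuous.

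I do not anticipate a genuine obstacle: the argument is just Corollary~\ref{ACWstock}, the heredity of the absolutely continuous cone, and the triviality that the absolutely continuous complex NC measures form a linear subspace of $\ncm$. The one point that wants a moment's care is the consistency, for a \emph{positive} NC measure, of the GNS-theoretic notion of absolute continuity used in Proposition~\ref{embedecomp} (the measure being of ``type $\mr{ac}$'') with the existence of a weak$-*$ continuous extension to $\scr{T}_d$; this equivalence is part of the NC Lebesgue decomposition of \cite{JM-ncld} and underlies Corollary~\ref{ACWstock} as well, so it may simply be cited.
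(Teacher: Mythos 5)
Your proposal is correct and follows essentially the same route as the paper: the forward direction is Corollary~\ref{ACWstock}, and the reverse direction uses $\mu_k\leq|\vec{\mu}|$ together with the heredity of the absolutely continuous cone, then recombines the weak$-*$ continuous extensions linearly. Your parenthetical caution about reconciling the GNS-theoretic notion of type $\mr{ac}$ with the extension-to-$\scr{T}_d$ definition is well placed and is indeed handled in the paper by citation to the NC Lebesgue decomposition results of \cite{JM-ncld}.
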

\begin{proof}
	If $\mu$ is absolutely continuous, then we can apply Corollary \ref{ACWstock}.
	Conversely, suppose that $\vec{\mu}=(\mu_1,\mu_2,\mu_3,\mu_4)\in\mathscr{W}(\mu)$ is such that $|\vec{\mu}|$ is absolutely continuous.
	Recall that the absolutely continuous NC measures form a hereditary cone.
	As $\mu_j\leq|\vec{\mu}|$, it follows that $\mu_j$ is absolutely continuous for each $j$.
	Thus, $\mu$ is then absolutely continuous.
\end{proof}

\begin{defn}
	Let $\ft\in\{\rm{ac},\rm{s},\rm{dil},\rm{vN}\}$.
	A complex NC measure $\mu \in \ncm$ is type $\ft$, if there exists a Wittstock decomposition $\vec{\mu} = (\mu _k )_{k=1} ^4 \in \scr{W} (\mu )$ such that $|\vec{\mu}| = \mu _1 + \mu _2 + \mu _3 + \mu _4$ is type $\ft$.
\end{defn}
\begin{remark}
	Since $|\vec{\mu}| \geq \mu _k $ for each $1\leq k \leq 4$, where $\vec{\mu} = ( \mu _k )_k$ is a Wittstock decomposition of $\mu$, and the sets of AC, singular, dilation--type and von Neumann--type NC measures are positive hereditary cones, it follows that if $|\vec{\mu}|$ is one of these four types, then so is each $\mu _k$.
	It follows that $\vec{\mu}$ cannot, for example, be both absolutely continuous and von Neumann type without also being $(0,0,0,0)$.
\end{remark}

\begin{lemma}\label{Wittcancel}
	Let $\ft,\fu\in\{\rm{ac},\rm{s},\rm{dil},\rm{vN}\}$ and suppose $(\nu_\ft)_\fu=0$ for all $\nu\in\posncm$.
	If $\mu\in\ncm$ is type $\ft$ and $(\lambda_1,\lambda_2,\lambda_3,\lambda_4)\in\mathscr{W}(\mu)$, then
	\[ \lambda_{1;\fu}=\lambda_{2;\fu}, \quad \lambda_{3;\fu}=\lambda_{4;\fu}. \]
\end{lemma}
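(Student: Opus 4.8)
The plan is to play the given Wittstock decomposition $\vec{\lambda}=(\lambda_1,\lambda_2,\lambda_3,\lambda_4)\in\scr{W}(\mu)$ off against a second one that witnesses ``$\mu$ is type $\ft$''. By definition of type $\ft$ for complex NC measures there is a $\vec{\mu}=(\mu_1,\mu_2,\mu_3,\mu_4)\in\scr{W}(\mu)$ with $|\vec{\mu}|=\mu_1+\mu_2+\mu_3+\mu_4$ of type $\ft$. Since $0\le\mu_k\le|\vec{\mu}|$ and the type-$\ft$ positive NC measures form a hereditary cone (Proposition \ref{embedecomp} for $\ft\in\{\mr{ac},\mr{s}\}$, Theorem \ref{dilvNcone} for $\ft\in\{\mr{dil},\mr{vN}\}$), each $\mu_k$ is of type $\ft$. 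Hence $P_{\mu_k;\ft}=I$, so $\mu_k$ coincides with its own type-$\ft$ part, and the standing hypothesis $(\nu_\ft)_\fu=0$ applied to $\nu=\mu_k$ yields $\mu_{k;\fu}=0$ for $k=1,2,3,4$.

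The second ingredient is that $\vec{\mu}$ and $\vec{\lambda}$ represent the \emph{same} functional $\mu$, so $\mu_1-\mu_2=\re{\mu}=\lambda_1-\lambda_2$ and $\mu_3-\mu_4=\im{\mu}=\lambda_3-\lambda_4$; rewriting these without subtraction, $\mu_1+\lambda_2=\mu_2+\lambda_1$ and $\mu_3+\lambda_4=\mu_4+\lambda_3$ as identities of positive NC measures. Now $(\cdot)_\fu$ is additive on $\posncm$ --- this is part of Corollary \ref{refncld} when $\fu\in\{\mr{ac},\mr{dil},\mr{vN}\}$, and for $\fu=\mr{s}$ it follows since $\sigma_{\mr{s}}=\sigma_{\mr{dil}}+\sigma_{\mr{vN}}$ for every $\sigma\in\posncm$ (Corollary \ref{refncld}), while $(\cdot)_{\mr{dil}}$ and $(\cdot)_{\mr{vN}}$ are additive. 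Applying $(\cdot)_\fu$ to $\mu_1+\lambda_2=\mu_2+\lambda_1$ gives
\[ \mu_{1;\fu}+\lambda_{2;\fu}=(\mu_1+\lambda_2)_\fu=(\mu_2+\lambda_1)_\fu=\mu_{2;\fu}+\lambda_{1;\fu}, \]
and since $\mu_{1;\fu}=\mu_{2;\fu}=0$ we conclude $\lambda_{1;\fu}=\lambda_{2;\fu}$; the identical computation applied to $\mu_3+\lambda_4=\mu_4+\lambda_3$ gives $\lambda_{3;\fu}=\lambda_{4;\fu}$.

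Essentially every step is a direct unwinding of definitions; the only point needing a short argument is the additivity of $(\cdot)_{\mr{s}}$, which is not literally stated in Corollary \ref{refncld} but follows from it as indicated. The conceptual heart of the argument is that ``$\mu$ is type $\ft$'' only supplies \emph{some} favourable Wittstock decomposition $\vec{\mu}$, so one must transport information from $\vec{\mu}$ to the arbitrary $\vec{\lambda}$ --- and the bridge is precisely the fact that any two Wittstock decompositions of $\mu$ share the same real and imaginary parts.
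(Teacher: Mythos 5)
Your proposal is correct and follows essentially the same route as the paper's proof: extract a Wittstock decomposition $\vec{\mu}$ with each $\mu_k$ of type $\ft$ (hence $\mu_{k;\fu}=0$ by the hypothesis), equate real and imaginary parts to get $\mu_1+\lambda_2=\mu_2+\lambda_1$ and $\mu_3+\lambda_4=\mu_4+\lambda_3$, and apply additivity of $(\cdot)_\fu$ from Corollary \ref{refncld}. Your explicit handling of the $\fu=\mathrm{s}$ case of additivity (via $\sigma_{\mathrm{s}}=\sigma_{\mathrm{dil}}+\sigma_{\mathrm{vN}}$) is a small point the paper leaves implicit, but the argument is the same.
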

\begin{proof}
	Since $\mu$ is type $\ft$, there exist $(\mu_1,\mu_2,\mu_3,\mu_4)\in \mathscr{W}(\mu)$, where each $\mu_j$ is type $\ft$. 
	Thus, $(\mu_j)_\ft=\mu_j$ for each $j$.
	By separating the real and imaginary parts of $\mu$, we find that
	\[ \mu_1-\mu_2=\lambda_1-\lambda_2,\quad \mu_3-\mu_4=\lambda_3-\lambda_4, \]
	and thus
	\[ \mu_1+\lambda_2=\lambda_1+\mu_2,\quad \mu_3+\lambda_4=\lambda_3+\mu_4. \]
	Using Corollary \ref{refncld}, we have
	\[ \lambda_{2;\fu}=(\mu_1)_\fu+(\lambda_2)_\fu=(\mu_1+\lambda_2)_\fu=(\lambda_1+\mu_2)_\fu=\lambda_{1;\fu}. \]
	Likewise, we find that $\lambda_{3;\fu}=\lambda_{4;\fu}$.
\end{proof}
By the preceeding lemma, the condition $(\nu_\ft)_\fu=0$ is symmetric in $\ft,\fu$ and it occurs when $(\ft,\fu)\in\{(\rm{ac},\rm{s}),(\rm{ac},\rm{dil}),(\rm{ac},\rm{vN}),(\rm{dil},\rm{vN})\}$, \emph{i.e.} whenever $P_{\nu ; \ft } P_{\nu ; \fu} = 0$ for every $\nu \in \posncm$.

Any $\mu \in \ncm$ with Wittstock decomposition $\mu = (\mu _1 - \mu _2 ) + i (\mu _3 - \mu _4)$ has a corresponding Lebesgue decomposition:
$$ \mu = \mu _{\rm{ac}} + \mu _s, $$ where
$$ \mu _{\rm{ac}} = (\mu _{1;\rm{ac}} - \mu _{2;\rm{ac}} ) + i (\mu _{3;\rm{ac}} - \mu _{4;\rm{ac}}), $$ and similarly for $\mu_{\rm{s}}$. Similarly,
$ \mu_{\rm{s}} = \mu _{\rm{dil}} + \mu _{\rm{vN}}. $ More generally, for $\ft\in\rm{Types}$, we set
$$ \mu _{\ft} := (\mu _{1;\ft} - \mu _{2;\ft} ) + i ( \mu _{3;\ft} - \mu _{4 ;\ft} ). $$

\subsection{GNS formula}\label{S:GNSF}

Let $\mu\in\ncm$ and let $\vec{\mu} = ( \mu _k )_{k=1}^4$ be a Wittstock decomposition of $\mu$.
Since $|\vec{\mu}| \geq \mu _k$, there exists a corresponding  contractive co-embedding $E _k : \mathbb{H} ^2 _d (|\vec{\mu}|) \to  \mathbb{H} ^2 _d (\mu _k ) $ for $k=1,2,3,4$.
Then, given $a_1,a_2\in\A$,
$$ \mu _k (a_1^*a_2) = \ip{I + N_{|\vec{\mu}|}}{ \pi_{|\vec{\mu}|}(a_1)^*D _k \pi_{|\vec{\mu}|}(a_2)(I + N _{|\vec{\mu}|} ) }; \quad \quad D _k := E_k ^* E_k. $$ 
It then follows that 
\be  \mu (a_1^*a_2 ) = \ip{I + N_{|\vec{\mu}|}}{ \pi_{|\vec{\mu}|}(a_1)T _\mu \pi_{|\vec{\mu}|}(a_2) (I + N _{|\vec{\mu}|} ) }, \label{GNS} \ee where
$$ T_\mu := (D_1 - D_2 ) + i (D_3 - D_4 ) $$
is a $|\vec{\mu}| -$Toeplitz operator, \emph{i.e.}
$$ \pi _{|\vec{\mu}| } ( L_k ^* )  T_\mu \pi _{|\vec{\mu}| } (L_j ) = \delta _{k,j} T _\mu. $$

\section{Analytic NC measures}

\begin{defn}
	An \textit{analytic} NC measure, $\mu \in \ncm$, is any complex NC measure that annihilates $\Anot := \A \setminus \{ I \}$.
\end{defn}
The following is an analogue of \cite[Corollary 2, Chapter 4]{Hoff}.
\begin{thm} \label{BabyFnM}
	If $\mu \in \ncm$ is a complex NC measure that annihilates $\Anot$, then each of $\mu _{\rm{ac}}$, $\mu_{\rm{s}}$, $\mu_{\rm{dil}}$ and $\mu_{\rm{vN}}$ annihilate $\Anot$. 
\end{thm}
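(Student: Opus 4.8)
The plan is to use the GNS formula \eqref{GNS} from Section~\ref{S:GNSF} together with the decomposition of the Toeplitz operator $T_\mu$ along the reducing projections $P_{|\vec\mu|;\ft}$. Fix a Wittstock decomposition $\vec\mu = (\mu_k)_{k=1}^4$ of $\mu$, write $|\vec\mu| =: \lambda$, $\Pi := \Pi_\lambda$, $x := I + N_\lambda$, and $T_\mu := (D_1 - D_2) + i(D_3 - D_4)$ as in Section~\ref{S:GNSF}. By Corollary~\ref{ACcommute} (for $\ft = \mr{ac}$) and Theorem~\ref{dilvNcone} (for $\ft \in \{\mr{dil},\mr{vN}\}$ and hence also $\ft = \mr{s}$), each $D_k = E_k^* E_k$ commutes with $P_{\lambda;\ft}$ for $\ft \in \{\mr{ac},\mr{s},\mr{dil},\mr{vN}\}$; indeed $E_k = E_{\mu_k,\lambda}$ and the cited results give $E_k P_{\lambda;\ft} = P_{\mu_k;\ft} E_k$, so $D_k P_{\lambda;\ft} = E_k^* P_{\mu_k;\ft} E_k = P_{\lambda;\ft} D_k$. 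Consequently $T_\mu$ commutes with each such $P_{\lambda;\ft}$, and since $P_{\lambda;\ft}$ is $\Pi$-reducing, the operator $P_{\lambda;\ft} T_\mu = T_\mu P_{\lambda;\ft}$ is again $\lambda$-Toeplitz.

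Next I would identify $\mu_\ft$ spacially. For $a_1,a_2 \in \A$, formula~\eqref{GNS} gives
\[
\mu(a_1^* a_2) = \ip{x}{\pi_\lambda(a_1) T_\mu \pi_\lambda(a_2) x},
\]
and $\mu_\ft$, as defined at the end of Section~5, is $(\mu_{1;\ft} - \mu_{2;\ft}) + i(\mu_{3;\ft} - \mu_{4;\ft})$. Using the spacial description of $\mu_{k;\ft}$ via $P_{\mu_k;\ft}$ together with $E_k P_{\lambda;\ft} = P_{\mu_k;\ft} E_k$, one gets $\mu_{k;\ft}(a_1^* a_2) = \ip{x}{\pi_\lambda(a_1) P_{\lambda;\ft} D_k P_{\lambda;\ft} \pi_\lambda(a_2) x}$, and hence
\[
\mu_\ft(a_1^* a_2) = \ip{x}{\pi_\lambda(a_1)\, P_{\lambda;\ft} T_\mu\, \pi_\lambda(a_2) x}.
\]
(Here I am using that $P_{\lambda;\ft}$ is reducing, so it can be absorbed on one side, and commutes with $T_\mu$.) In particular, taking $a_1 = I$ and $a_2 = L^\beta$ for a nonempty word $\beta$, the hypothesis that $\mu$ annihilates $\Anot$ says $\ip{x}{T_\mu \Pi^\beta x} = 0$ for all $\beta \neq \varnothing$.

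The key step is then to promote the scalar condition $\ip{x}{T_\mu \Pi^\beta x} = 0$ (for all $\beta \neq \varnothing$) to the same condition with $P_{\lambda;\ft} T_\mu$ in place of $T_\mu$. Since $x = I + N_\lambda$ is a cyclic vector for $\Pi = \Pi_\lambda$, the vectors $\{\Pi^\gamma x : \gamma \in \F^d\}$ span a dense subspace; the condition that $\mu$ annihilates $\Anot$, being ``$\mu(L^\beta) = 0$ for $\beta \neq \varnothing$,'' must be leveraged using the Toeplitz structure of $T_\mu$. The natural approach: the operator $B := T_\mu$ satisfies $\Pi_j^* B \Pi_k = \delta_{j,k} B$, so $B$ behaves like a ``multiplication by an $L^\infty$ symbol'' compressed appropriately; writing $P_{\lambda;\ft} = \mr{SOT}\text{-}\lim$ of polynomials in $\Pi$ and $\Pi^*$ when $\ft$ corresponds to a free-semigroup-algebra or von Neumann summand (via Lemma~\ref{vNvsdil} and Theorem~\ref{T:DLP}), and noting that each such polynomial, when sandwiched between $x$ and $\Pi^\beta x$ against $T_\mu$, produces again an expression of the form $\ip{x}{T_\mu \Pi^{\beta'} x}$ or its conjugate with $\beta' \neq \varnothing$ — this is where the Toeplitz relation $\Pi_j^* T_\mu = \Pi_j^* T_\mu \Pi_k \Pi_k^*$ collapses $\Pi^{\gamma*} T_\mu \Pi^\delta$ terms. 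I expect this bookkeeping — showing that every ``moment'' $\ip{x}{\pi_\lambda(a_1) P_{\lambda;\ft} T_\mu \pi_\lambda(a_2) x}$ for $a_2 \in \Anot$, $a_1 = I$ reduces to a vanishing combination of the original moments $\mu(L^\beta)$, $\beta \neq \varnothing$ — to be the main obstacle, since the projections $P_{\lambda;\mr{dil}}$ and $P_{\lambda;\mr{vN}}$ are not themselves polynomials in $\Pi,\Pi^*$. A cleaner route that sidesteps approximation: use Lemma~\ref{Wittcancel}. That lemma tells us, whenever $(\nu_\ft)_\fu = 0$ identically (which holds for $(\mr{ac},\mr{s})$, $(\mr{ac},\mr{dil})$, $(\mr{ac},\mr{vN})$, $(\mr{dil},\mr{vN})$), that the $\fu$-parts of the Wittstock pieces of any type-$\ft$ measure cancel pairwise. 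I would apply this as follows: first show $\mu_{\mr{ac}}$ annihilates $\Anot$ directly — $\mu_{\mr{ac}}(L^\beta) = \ip{x}{P_{\lambda;\mr{ac}} T_\mu \Pi^\beta x}$; using Corollary~\ref{ACcommute} and the fact that $P_{\lambda;\mr{ac}} = Q_\lambda^\perp$ equals the projection onto $\mr{WC}(\Pi_\lambda)$ on the AC part... actually more simply, decompose $x = P_{\lambda;\mr{ac}} x \oplus P_{\lambda;\mr{s}} x$ and use that $T_\mu$ commutes with $P_{\lambda;\mr{ac}}$ and $P_{\lambda;\mr{s}}$, so $\mu = \mu_{\mr{ac}} + \mu_{\mr{s}}$ with $\mu_{\mr{ac}}(L^\beta) = \ip{P_{\lambda;\mr{ac}}x}{T_\mu \Pi^\beta P_{\lambda;\mr{ac}} x}$ and similarly for $\mu_{\mr{s}}$. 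So it suffices to prove: if a complex NC measure $\mu$ annihilates $\Anot$ and is \emph{either} AC \emph{or} singular, then so do its pieces; and then iterate on $\mu_{\mr{s}}$ with the splitting $\mr{s} = \mr{dil} + \mr{vN}$.

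To close the argument I would run the following reduction. Having reduced to $\mu = \mu_{\mr{ac}} + \mu_{\mr{s}}$ with each summand a complex NC measure (by the commutation just noted), I claim each summand still annihilates $\Anot$. Consider the measure $\mu_{\mr{ac}}$ with Wittstock decomposition $(\mu_{1;\mr{ac}},\dots,\mu_{4;\mr{ac}})$, which by Corollary~\ref{refncld} has total variation $|\vec\mu|_{\mr{ac}} = \lambda_{\mr{ac}}$ and so is AC; by Corollary~\ref{ACWstock} it has an AC Wittstock decomposition, hence is a genuinely AC complex NC measure. For the annihilation: apply Lemma~\ref{Wittcancel} with $\ft = \mr{s}$, $\fu = \mr{ac}$ to the \emph{singular} piece $\mu_{\mr{s}}$ — wait, this requires knowing $\mu_{\mr{s}}$ is type $\mr{s}$, which it is (its total variation $\lambda_{\mr{s}}$ is singular). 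Lemma~\ref{Wittcancel} then gives, for any Wittstock decomposition $(\kappa_1,\dots,\kappa_4)$ of $\mu_{\mr{s}}$, that $\kappa_{1;\mr{ac}} = \kappa_{2;\mr{ac}}$ and $\kappa_{3;\mr{ac}} = \kappa_{4;\mr{ac}}$, hence $(\mu_{\mr{s}})_{\mr{ac}} = 0$; dually, applying it to $\mu_{\mr{ac}}$ with $\ft = \mr{ac}$, $\fu = \mr{s}$ gives $(\mu_{\mr{ac}})_{\mr{s}} = 0$. Now write $0 = \mu(L^\beta) = \mu_{\mr{ac}}(L^\beta) + \mu_{\mr{s}}(L^\beta)$ for $\beta \neq \varnothing$. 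To separate these two scalars I use that $\mu_{\mr{ac}}$, being AC, is by Lemma~\ref{ACvecfun} a vector functional $b \mapsto \ip{f}{bg}$ with $f,g \in \hardy$ (assuming $d \geq 2$; the case $d=1$ is classical), while $\mu_{\mr{s}}$ is ``concentrated on the singular part'' — and here the cleanest finish is: the complex NC measure $\mu_{\mr{ac}} = \mu - \mu_{\mr{s}}$, where $\mu$ annihilates $\Anot$, so $\mu_{\mr{ac}}$ annihilates $\Anot$ iff $\mu_{\mr{s}}$ does. Thus it remains only to show \emph{one} of them annihilates $\Anot$; I would show $\mu_{\mr{s}}(L^\beta) = 0$ for $\beta\ne\varnothing$ by noting $\mu_{\mr{ac}}(L^\beta) = \ip{f}{L^\beta g}$ and that the hypothesis plus AC-ness force... hmm, this still needs an independent computation.

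The genuinely independent input, and the real crux, is the case $\mu$ AC: an analytic AC NC measure is $b \mapsto \ip{f}{bg}$ and its pieces are cut out by the $\Pi_\lambda$-reducing $P_{\lambda;\mr{ACC}}$ versus the pure-$L$ projection; here one shows $\mu_{\mr{ACC}}(L^\beta) = \ip{x}{P_{\lambda;\mr{ACC}} T_\mu \Pi^\beta x}$ and, because $P_{\lambda;\mr{ACC}}$ commutes with $\Pi_\lambda$ and $\Pi_\lambda$ is AC there, $P_{\lambda;\mr{ACC}} T_\mu$ is again Toeplitz and corresponds to an $L^\infty_d$-symbol — then the Fejér/Cesàro argument of the classical F\&M proof applies verbatim, modulo the NC Brown--Halmos framework referenced in the Remark after the GNS background. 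I expect the hardest technical point overall to be establishing that $P_{\lambda;\ft} T_\mu$ remains a legitimate Toeplitz operator whose associated ``symbol'' inherits the vanishing of positive moments; everything else is either cited (Corollary~\ref{ACcommute}, Theorem~\ref{dilvNcone}, Lemma~\ref{Wittcancel}, Corollary~\ref{refncld}) or routine bookkeeping with the GNS formula.
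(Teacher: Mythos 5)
Your setup is exactly the paper's: you fix a Wittstock decomposition, pass to the GNS space of $\la := |\vec{\mu}|$, write $\mu$ via the $\la$-Toeplitz operator $T_\mu$, and use Proposition \ref{embedecomp}, Theorem \ref{dilvNcone} and Corollary \ref{ACcommute} to get $T_\mu P_{\la;\ft} = P_{\la;\ft}T_\mu$ and $\mu_\ft(L^\alpha) = \ip{I+N_\la}{T_\mu P_{\la;\ft}\Pi_\la^\alpha(I+N_\la)}$. You also correctly isolate the key step: promoting $\ip{I+N_\la}{T_\mu\Pi_\la^\beta(I+N_\la)}=0$ ($\beta\neq\varnothing$) to the same statement with $P_{\la;\ft}$ inserted. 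But you never close that step. You look for $P_{\la;\ft}$ as an SOT-limit of polynomials in $\Pi_\la,\Pi_\la^*$ (and rightly note this fails for the dilation and von Neumann projections), then detour through Lemma \ref{Wittcancel}, a reduction to ``one of $\mu_{\rm ac},\mu_{\rm s}$ annihilates $\Anot$'' that you cannot complete, and finally a sketched Ces\`aro argument for the AC case that is not carried out. As written, the proof has a genuine gap precisely at the step you yourself flag as the crux.

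The missing idea is to approximate the \emph{vector} $P_{\la;\ft}(I+N_\la)$ rather than the \emph{operator} $P_{\la;\ft}$. Since $\mathbb{H}^2_d(\la)$ is by construction the closure of $\{a+N_\la : a\in\A\}$, there are NC polynomials $a_n\in\A$ with $a_n+N_\la\to P_{\la;\ft}(I+N_\la)$. For $\alpha\neq\varnothing$ one has $L^\alpha a_n\in\Anot$, so $\mu(L^\alpha a_n)=0$, and by the GNS formula and continuity
\[
0=\lim_n \mu(L^\alpha a_n)=\ip{I+N_\la}{T_\mu\,\Pi_\la^\alpha P_{\la;\ft}(I+N_\la)}=\ip{I+N_\la}{T_\mu P_{\la;\ft}\,\Pi_\la^\alpha(I+N_\la)}=\mu_\ft(L^\alpha),
\]
using that $P_{\la;\ft}$ is reducing for $\Pi_\la$. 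This is the entire remaining argument in the paper; no Toeplitz symbol calculus, no Fej\'er kernel, and no case split between AC and singular parts is needed. Everything else in your write-up (the commutation relations, the identification of $\mu_\ft$ spacially) is correct and matches the paper.
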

\begin{proof}
	Let $\vec{\mu}=(\mu_1,\mu_2,\mu_3,\mu_4)\in\mathscr{W}(\mu)$.
	As in Section \ref{S:GNSF}, if $\la := |\vec{\mu}|$, then
	$$ \mu (L^\alpha ) = \ip{ I + N_\la}{T_\mu \Pi_\la ^\alpha (I + N_\la)}_\la, \quad \alpha\in\F ^d,$$
	where $$ T_\mu = (D_1 - D_2 ) +i (D_3 - D_4); \quad \quad D_k = E_k ^* E_k $$ and each
	$E_k : \mathbb{H} ^2 _d (\lambda ) \to \mathbb{H} ^2 _d (\mu _k)$ is the  contractive co-embedding arising from $\mu_k\leq|\vec{\mu}|$.
	By Proposition \ref{embedecomp}, Theorem \ref{dilvNcone} and Corollary \ref{ACcommute}, it further follows that if $\ft \in \{ \rm{ac}, \rm{s}, \rm{dil}, \rm{vN} \}$, then 
	$$ \mu _{\ft} (L^\alpha ) = \ip{ I + N_\la}{T_{\mu ; \ft } \Pi _\la ^\alpha (I + N_\la)}, \quad \alpha\in\F ^d, $$
	where $T_{\mu ; \ft} = T_\mu P_{\la ; \ft} = P_{\la ;\ft} T_\mu$.
	In particular, since $\mu | _{\Anot} \equiv 0$, we can find a sequence of NC polynomials $(a_n)_n$ in $\A$ such that $a_n  + N_\la \rightarrow P_{\lambda;\ft}(I + N_\lambda)$.
	Then, for any $\alpha \neq \varnothing$,
	\ba 0 & = & \lim_{n\to\infty} \mu \left( L^\alpha a_n \right) \nn \\
	& = & \ip{I + N_\la}{T_\mu \Pi _\la ^\alpha P_{\la ;\ft}( I + N_{\la} ) } _\la \nn \\
	& = & \ip{I + N_\la}{T_\mu P_{\la;\ft} \Pi _\la ^\alpha ( I + N_{\la}) } _\la \nn \\
	& =& \mu _{\ft} (L^\alpha ). \nn \ea
	This proves that $\mu _{\ft}$ also annihilates $\Anot$ for any $\ft \in \{ \rm{ac} , \rm{s}, \rm{dil}, \rm{vN} \}$.
\end{proof}

In the next lemma, we say that $\mu \in \ncm$ is of \textit{Cuntz--type} if it has a Wittstock decomposition $( \mu _k ) _{k=1} ^4$ whose corresponding total variation $ \mu _1 + \mu _2 + \mu _3 + \mu _4$ is Cuntz--type.
It follows from Lemma \ref{Cuntzhered} that each $\mu _k$ is of Cuntz--type.

\begin{lemma} \label{bwshift}
	If $\mu \in \ncm$ is a complex NC measure of Cuntz--type that annihilates $\Anot$, then it also annihilates $\A$.
\end{lemma}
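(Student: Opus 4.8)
The plan is to read off $\mu(I)$ from the GNS representation formula of Section~\ref{S:GNSF} and then to exploit the fact that, for a \emph{Cuntz} row isometry, a Toeplitz operator is automatically central. This centrality is what lets us trade the constant moment $\mu(I)$ for a non-constant moment, which vanishes by hypothesis; without it (for instance for the non-Cuntz left shift $L$ itself) there is no such trade, which is why the Cuntz hypothesis is essential.

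First I would fix a Wittstock decomposition $\vec{\mu} = (\mu_k)_{k=1}^4$ of $\mu$ whose total variation $\la := |\vec{\mu}|$ is of Cuntz--type, so that $\Pi_\la$ is a Cuntz row isometry. By the GNS formula (\ref{GNS}),
\[ \mu(a_1^* a_2) = \ip{I + N_\la}{\pi_\la(a_1)\,T_\mu\,\pi_\la(a_2)(I + N_\la)}, \qquad a_1,a_2 \in \A, \]
with $T_\mu = (D_1 - D_2) + i(D_3 - D_4)$ a $\la$-Toeplitz operator, meaning $\Pi_{\la;k}^* T_\mu \Pi_{\la;j} = \delta_{k,j} T_\mu$ for all $j,k$. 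The key step is to multiply this relation on the left by $\Pi_{\la;k}$, sum over $k$, and use the Cuntz identity $\sum_k \Pi_{\la;k}\Pi_{\la;k}^* = I$; this gives $T_\mu\Pi_{\la;j} = \Pi_{\la;j}T_\mu$ for every $j$, hence $T_\mu$ commutes with $\Pi_\la^\alpha$ for every word $\alpha$.

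Next I would apply (\ref{GNS}) with $a_1 = a_2 = L_1$. Since $L_1^* L_1 = I$ as operators, the left-hand side is just $\mu(I)$, so
\[ \mu(I) = \ip{I + N_\la}{\Pi_{\la;1}\,T_\mu\,\Pi_{\la;1}(I + N_\la)} = \ip{I + N_\la}{T_\mu\,\Pi_{\la;1}^2\,(I + N_\la)} = \mu(L_1 L_1), \]
where the middle equality uses $\Pi_{\la;1}T_\mu\Pi_{\la;1} = T_\mu\Pi_{\la;1}^2$ (from the commutation just established) and the last equality is another instance of (\ref{GNS}). Since $L_1 L_1 \in \Anot$, the hypothesis forces $\mu(L_1 L_1) = 0$, whence $\mu(I) = 0$. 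As $\mu$ already annihilates every $L^\alpha$ with $\alpha \neq \varnothing$ and the free monomials span a norm-dense subspace of $\A$, this shows that $\mu$ annihilates all of $\A$.

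The only substantive point is the commutation $T_\mu\Pi_{\la;j} = \Pi_{\la;j}T_\mu$, so that is the step I would be most careful about: I would verify that the Toeplitz identity together with the Cuntz relation really does force $T_\mu$ into the commutant of $\Pi_\la$ (and not merely into some weak$-*$ closure), since this rigidity --- which fails for pure type$-L$ summands --- is exactly what makes the argument go through.
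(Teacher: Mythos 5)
Your first step is correct and worth noting: the Toeplitz identity $\Pi_{\la;k}^*T_\mu\Pi_{\la;j}=\delta_{k,j}T_\mu$ together with the Cuntz relation $\sum_k\Pi_{\la;k}\Pi_{\la;k}^*=I$ does force $T_\mu\Pi_{\la;j}=\Pi_{\la;j}T_\mu$ (and, since $\Pi_\la$ is a row unitary, $T_\mu$ even commutes with the $\Pi_{\la;j}^*$). The gap is in the next step. The displayed equation (\ref{GNS}) contains a typographical error: as derived immediately above it, the correct formula is $\mu(a_1^*a_2)=\ip{I+N_\la}{\pi_\la(a_1)^*T_\mu\pi_\la(a_2)(I+N_\la)}$, with an adjoint on $\pi_\la(a_1)$ (check the positive case $\mu=\la$, $T_\mu=I$: without the adjoint the formula would assert $\mu(a_1^*a_2)=\mu(a_1a_2)$). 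Taking the printed version at face value is what lets you write $\mu(I)=\ip{I+N_\la}{\Pi_{\la;1}T_\mu\Pi_{\la;1}(I+N_\la)}$; with the correct formula, $a_1=a_2=L_1$ gives
\begin{equation*}
\mu(I)=\ip{I+N_\la}{\Pi_{\la;1}^*T_\mu\Pi_{\la;1}(I+N_\la)}=\ip{I+N_\la}{T_\mu(I+N_\la)}=\mu(I),
\end{equation*}
a tautology, by the very Toeplitz property you invoke. So the centrality of $T_\mu$ cannot, on its own, trade the constant moment for a nonconstant one: $\mu(I)$ never becomes $\mu(L_1L_1)$.

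The ingredient you are missing is a statement about the cyclic vector, not about $T_\mu$. The paper's proof uses that when $\Pi_\la$ is Cuntz, $I+N_\la$ lies in the norm closure of $\{a+N_\la\,:\,a\in\Anot\}$ (this is \cite[Theorem 6.4]{JM-freeCE}, and is where the Cuntz hypothesis genuinely enters). Writing $I+N_\la=\lim_n(a_n+N_\la)$ with $a_n\in\Anot$, continuity gives $\mu(I)=\ip{I+N_\la}{T_\mu(I+N_\la)}=\lim_n\mu(a_n)=0$. If you want to keep your commutation observation, it can be combined with this approximation, but it is not a substitute for it.
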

\begin{proof}
	Let $\vec{\mu}\in\mathscr{W}(\mu)$ and set $\la := |\vec{\mu}|$.
	By hypothesis, $\lambda$ is Cuntz.
	Since $\Pi _\la$ is Cuntz, $I + N _\la$ is the limit of a sequence of equivalence classes of NC polynomials $a_n \in \Anot$, as follows from \cite[Theorem 6.4]{JM-freeCE}.
	With $T_\mu$ as in Section \ref{S:GNSF}, we have
	$$ \mu  (I) = \lim_{n\to\infty} \ip{ I + N _\la }{ T_\mu \, (a_n  + N _\la) } = \lim_{n\to\infty} \mu  (a_n) =0. $$
\end{proof}

\begin{remark} \label{classproof}
	At this point, the proof of the classical F\&M Riesz theorem, as presented in \cite[Chapter 4]{Hoff}, is straightforward.
	Given any complex measure $\mu $ obeying the above assumptions we have that 
	$$
		\int _{\partial \D } a(\zeta ) \mu _s (d\zeta ) =0,
	$$ for any $a \in \mc{A} (\D )$, $\mc{A} (\D) = \mathbb{A} _1$.
	We then consider the complex measure $\mu _s ^{(\ov{\zeta})} (d\zeta ) := \ov{\zeta } \mu _s (d\zeta ) $.
	This is again a complex singular measure which annihilates $\mc{A} (\D ) ^{(0)} = \mc{A} (\D ) \sm I$.
	By the above lemma $\mu$ also annihilates $\mc{A} (\D)$. In particular it annihilates $1$, so that by construction
	$$
		\int _{\partial \D} \zeta ^k \mu _s (d\zeta ) =0; \quad \quad k \in \{  -1, 0 , 1 ,2 , ... \}.
	$$
	Proceeding inductively, we conclude that all moments of $\mu _s$ vanish so that $\mu _s \equiv 0$.
	In the NC setting, this argument breaks down for singular NC measures of dilation--type; see Section \ref{diltypesect}.
\end{remark}

The main result of this section is an analogue of the F\&M Riesz theorem:

\begin{thm}{ (NC F\&M Riesz Theorem)} \label{ncFnM}
	Every analytic NC measure $\mu \in \ncm$ has vanishing von Neumann part.
\end{thm}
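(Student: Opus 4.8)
The plan is to reduce the statement to the Cuntz-type cancellation lemma (Lemma~\ref{bwshift}) applied to the von Neumann part of $\mu$. By Theorem~\ref{BabyFnM}, since $\mu$ annihilates $\Anot$, so does its von Neumann part $\mu_{\rm{vN}}$. Now $\mu_{\rm{vN}}$ is a von Neumann--type complex NC measure, hence in particular of Cuntz--type, so Lemma~\ref{bwshift} gives that $\mu_{\rm{vN}}$ in fact annihilates all of $\A$, \emph{i.e.} $\mu_{\rm{vN}}(L^\alpha)=0$ for every $\alpha\in\F^d$ (including $\alpha=\varnothing$). The goal is then to bootstrap from ``$\mu_{\rm{vN}}$ kills $\A$'' to ``$\mu_{\rm{vN}}$ kills all of $\scr{A}_d$'', \emph{i.e.} $\mu_{\rm{vN}}\equiv 0$, by exploiting the fact that on a von Neumann--type row isometry the backward shifts $\Pi^{\gamma *}$ can be norm-- or weak$-*$--approximated by forward polynomials.

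Here is the mechanism I would use. Fix a Wittstock decomposition $\vec{\mu}_{\rm{vN}}=(\nu_1,\nu_2,\nu_3,\nu_4)\in\scr{W}(\mu_{\rm{vN}})$ with total variation $\la:=|\vec{\mu}_{\rm{vN}}|$ of von Neumann type, and use the GNS formula from Section~\ref{S:GNSF}:
\[
\mu_{\rm{vN}}(L^\alpha L^{\beta *}) = \ip{I+N_\la}{T_{\mu_{\rm{vN}}}\,\Pi_\la^\alpha \Pi_\la^{\beta *}\,(I+N_\la)}_\la,
\]
where $T_{\mu_{\rm{vN}}}$ is the associated $\la$--Toeplitz operator (this makes sense because $\Pi_\la$ is Cuntz, so $\Pi_\la^{\beta *}$ lies in the $C^*$-algebra and words of the form $L^\alpha L^{\beta*}$ span a dense subspace of $\scr{A}_d$ after applying $\pi_\la$). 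Since $\Pi_\la$ is of von Neumann type, the free semigroup algebra it generates is self-adjoint: for each word $\beta$ there is a net $(a_\gamma)$ in $\A$ with $\pi_\la(a_\gamma)\to \Pi_\la^{\beta*}$ weak$-*$. Plugging this in and using that $\mu_{\rm{vN}}$ annihilates $\A$ (hence $\mu_{\rm{vN}}(L^\alpha a_\gamma)=0$ for all $\gamma$), one gets $\mu_{\rm{vN}}(L^\alpha L^{\beta*})=0$ for all $\alpha,\beta$. Because $\{L^\alpha L^{\beta*}\}_{\alpha,\beta}$ has dense linear span in $\scr{A}_d$ (the free disk system is the norm closure of $\A+\A^*$, and $\A^*\A\subseteq\scr{A}_d$ by the semi-Dirichlet property), continuity of $\mu_{\rm{vN}}$ forces $\mu_{\rm{vN}}\equiv 0$, which is the assertion.

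The main obstacle I anticipate is the interchange of the weak$-*$ limit with the linear functional $\mu_{\rm{vN}}$: $\mu_{\rm{vN}}$ is only a bounded functional on $\scr{A}_d$, not a priori weak$-*$ continuous on $\scr{T}_d$ (indeed, a von Neumann--type measure is as far from absolutely continuous as possible, so it need not extend weak$-*$ continuously). So one cannot naively write $\mu_{\rm{vN}}(L^\alpha L^{\beta*})=\lim_\gamma \mu_{\rm{vN}}(L^\alpha a_\gamma)$. The fix is to keep the limit \emph{inside} the GNS inner product on $\hardy(\la)$: the map $X\mapsto \ip{I+N_\la}{T_{\mu_{\rm{vN}}}\,\Pi_\la^\alpha\, X\,(I+N_\la)}_\la$ is weak$-*$ continuous on $\scr{L}(\hardy(\la))$ since it is a vector functional composed with a bounded operator, so $\pi_\la(a_\gamma)\to\Pi_\la^{\beta*}$ weak$-*$ does transfer, and then $\ip{I+N_\la}{T_{\mu_{\rm{vN}}}\Pi_\la^\alpha \pi_\la(a_\gamma)(I+N_\la)}_\la = \mu_{\rm{vN}}(L^\alpha a_\gamma)=0$ by the already--established fact that $\mu_{\rm{vN}}$ kills $\A$. (One should double--check the convergence $a_\gamma+N_\la\to$ something is not needed; only the operator convergence $\pi_\la(a_\gamma)\to\Pi_\la^{\beta*}$ weak$-*$ is used, exactly as in the proof of Lemma~\ref{munotvN}.) Modulo this care with topologies, the argument is a short assembly of Theorem~\ref{BabyFnM}, Lemma~\ref{bwshift}, and the von Neumann--type self-adjointness of $\mf{S}(\Pi_\la)$.
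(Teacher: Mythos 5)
Your proof is correct, but its second half takes a genuinely different route from the paper's. Both arguments open the same way: Theorem \ref{BabyFnM} shows that $\mu_{\mathrm{vN}}$ annihilates $\Anot$, and Lemma \ref{bwshift} (applicable because von Neumann type implies Cuntz type) upgrades this to annihilation of all of $\A$. To kill the remaining moments $\mu_{\mathrm{vN}}(L^{\beta *})$, the paper follows the classical template of Remark \ref{classproof}: it forms the backward--shifted measures $\mu_{\mathrm{vN}}^{(k)} = \mu_{\mathrm{vN}}(L_k^*\,\cdot\,)$, proves in Proposition \ref{keyprop} (via Lemma \ref{posvN} and a sixteen--term Wittstock computation) that these remain of von Neumann type, and then iterates Lemma \ref{bwshift} one letter at a time. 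You instead use the defining property of von Neumann type directly: $\mf{S}(\Pi_\la)$ is self-adjoint, so $\Pi_\la^{\beta *}$ is a WOT--limit of operators $\pi_\la(a_\gamma)$ with $a_\gamma \in \A$, and the vector functional $X \mapsto \ip{T^*(I+N_\la)}{X(I+N_\la)}$ passes this limit through, giving $\mu_{\mathrm{vN}}(L^{\beta *}) = \lim_\gamma \mu_{\mathrm{vN}}(a_\gamma) = 0$ in a single stroke; this bypasses Proposition \ref{keyprop} and Lemma \ref{posvN} entirely and is arguably shorter. What your route costs is one verification you leave implicit: the GNS formula of Section \ref{S:GNSF} literally yields $\mu_{\mathrm{vN}}(L^{\beta *}) = \ip{\Pi_\la^{\beta}(I+N_\la)}{T(I+N_\la)} = \ip{I+N_\la}{\Pi_\la^{\beta *}T(I+N_\la)}$, so to move $T$ to the left of $\Pi_\la^{\beta *}$ as in your displayed formula you need $T$ to commute with $\Pi_\la^{\beta *}$. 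This does hold, because each $D_k = E_k^* E_k$ lies in the commutant of the von Neumann algebra generated by the Cuntz row isometry $\Pi_\la$ by Lemma \ref{Cuntztwine}, but it should be cited. Two cosmetic points: the operators $L^\alpha L^{\beta *}$ with both words nonempty do not lie in $\scr{A}_d$, so you should restrict to the cases $\alpha = \varnothing$ or $\beta = \varnothing$, whose linear span is already dense in $\scr{A}_d$; and WOT--continuity of the vector functional suffices, so the weak-$*$ versus WOT distinction you worry about causes no difficulty.
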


When $d=1$, the von Neumann part of an isometry is the singular part of its unitary direct summand.
Hence any analytic measure on the circle is absolutely continuous and we recover the classical F\&M Riesz theorem with a new proof.
Note, however, that as soon as $d \geq 2$, an analytic NC measure, $\mu$, will be AC if and only if it has no dilation part.
That is, an analytic linear functional on $\scr{A} _d$ need not extend weak$-*$ continuously to $\scr{T} _d$; see Proposition \ref{countereg}.

Note that $X\mapsto L_j^*X$ is a contractive linear map on $\mathscr{L}(\hardy)$.
As $L_j^*(\A+\A^*)\subset \A+\A^*$, it follows by continuity that $b\mapsto L_j^*b$ is a contractive linear endomorphism of $\mathscr{A}_d$.
\begin{defn} \label{bwshiftmeas}
	Let $\lambda\in\ncm$ and $k=1,2,\ldots,d$.
	Define $\lambda^{(k)}\in\mathscr{A}_d$ by
	\[ \lambda^{(k)}(b)=\lambda(L_k^*b), \quad b\in\mathscr{A}_d. \]
\end{defn}
\begin{lemma} \label{posvN}
	Let $a_0\in\A$ and $\lambda\in\posncm$.
	Define $\mu\in\posncm$ by setting $\mu(b)=\lambda(a_0^*ba_0)$ for $b\in\mathscr{A}_d$.
	If $\la$ is absolutely continuous, then so is $\mu$.
	If $\lambda$ is von Neumann type, then so is $\mu$.
\end{lemma}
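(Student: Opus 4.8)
The plan is to realise $\mu$ as a vector functional of $\Pi_\lambda$ and then appeal to the hereditary-type results already established. First, since $\A$ is an algebra and the semi-Dirichlet property gives $\A^*\A\subseteq\scr{A} _d$, one has $a_0^*ba_0=a_0^*(ba_0)\in\A^*\A\subseteq\scr{A} _d$ for every $b\in\A$; taking adjoints and norm-closures upgrades this to
\[ a_0^*\,\scr{A} _d\,a_0\subseteq\scr{A} _d. \]
In particular $\mu$ is a well-defined element of $\posncm$ (with $\mu(c^*c)=\lambda\big((ca_0)^*(ca_0)\big)\ge 0$), and, writing $\ip{p+N_\lambda}{q+N_\lambda}_\lambda=\lambda(p^*q)$ for $p,q\in\A$, we get
\[ \mu(L^\alpha)=\lambda\big(a_0^*L^\alpha a_0\big)=\ip{a_0+N_\lambda}{\Pi_\lambda^\alpha\,(a_0+N_\lambda)}_\lambda,\qquad \alpha\in\F^d. \]
Thus $\mu$ has the same moments as the vector functional $\lambda_x$ of $\Pi_\lambda$ associated with $x:=a_0+N_\lambda\in\hardy(\lambda)$, and since a positive NC measure is determined by its moments, $\mu=\lambda_x$.

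The von Neumann case is now immediate: if $\lambda$ is of von Neumann type then $P_{\lambda;\mr{vN}}=I$, so $x\in\nbran P_{\lambda;\mr{vN}}$ trivially, and Lemma \ref{munotvN} yields that $\mu=\lambda_x$ is of von Neumann type.

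For the absolutely continuous case I would instead argue at the level of the operator system. Let $\hat\lambda$ be the weak$-*$ continuous extension of $\lambda$ to $\scr{T} _d$ provided by absolute continuity. The map $T\colon\scr{L}(\hardy)\to\scr{L}(\hardy)$, $T(b)=a_0^*ba_0$, has preadjoint $K\mapsto a_0Ka_0^*$ on the trace class, so $T$ is weak$-*$ continuous; combined with $T(\scr{A} _d)\subseteq\scr{A} _d$ from the display above, this forces $T(\scr{T} _d)\subseteq\scr{T} _d$, because any element of $\scr{T} _d=\scr{A} _d^{-\text{weak-}*}$ is a weak$-*$ limit of a net in $\scr{A} _d$ and is carried by $T$ to a weak$-*$ limit of a net in $\scr{A} _d$. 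Then $\hat\mu:=\hat\lambda\circ T$ is a weak$-*$ continuous functional on $\scr{T} _d$ which restricts to $\mu$ on $\scr{A} _d$ (since $a_0^*ba_0\in\scr{A} _d$ whenever $b\in\scr{A} _d$), so $\mu$ is absolutely continuous.

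The only slightly technical point is the containment $a_0^*\scr{A} _d a_0\subseteq\scr{A} _d$ together with the weak$-*$ continuity of $b\mapsto a_0^*ba_0$ that it feeds into; once these are in place, the two conclusions follow respectively from Lemma \ref{munotvN} and from composing the extension $\hat\lambda$ with $b\mapsto a_0^*ba_0$, so I do not anticipate any deeper obstruction.
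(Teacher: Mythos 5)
Your proof is correct, and the two halves relate to the paper's argument differently. The common core is the same: you identify $\mu$ with the vector functional at $x=a_0+N_\lambda$ (the paper packages this as an isometry $V:\hardy(\mu)\to\hardy(\lambda)$ with $V(a+N_\mu)=aa_0+N_\lambda$, whose range is the cyclic subspace generated by $x$). For the von Neumann half you simply invoke Lemma \ref{munotvN} with $P_{\lambda;\mathrm{vN}}=I$; this is legitimate (that lemma precedes this one, so there is no circularity) and it encapsulates essentially the same cyclic-subspace argument the paper runs by hand, namely that $\nbran V$ is reducing because the free semigroup algebra of $\Pi_\lambda$ is self-adjoint. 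For the absolutely continuous half your route is genuinely different: the paper observes that every vector of $\hardy(\lambda)$ is weak$-*$ continuous when $\lambda$ is AC and then reads off weak$-*$ continuity of the vector functional at $V(I+N_\mu)$ via Theorem \ref{T:DLP}, whereas you stay at the level of the operator system, noting that $b\mapsto a_0^*ba_0$ has preadjoint $K\mapsto a_0Ka_0^*$ on the trace class, hence is a weak$-*$ continuous self-map of $\mathscr{L}(\hardy)$ carrying $\scr{A}_d$ (and therefore $\scr{T}_d$) into itself, so that $\hat\lambda\circ(a_0^*(\cdot)a_0)$ is the required weak$-*$ continuous extension of $\mu$. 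Your AC argument is more self-contained, avoiding the weak$-*$ continuous vector machinery entirely; the paper's version has the advantage of constructing the invariant subspace $\nbran V$ once and reusing it for both cases. All the supporting steps you flag --- the containment $a_0^*\scr{A}_d a_0\subseteq\scr{A}_d$ via the semi-Dirichlet property and self-adjointness of $\scr{A}_d$, and the determination of $\mu$ by its moments --- check out.
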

\begin{proof}
	Plainly, $X\mapsto a_0^*Xa_0$ is a continuous linear map on $\mathscr{L}(\mathbb{H}^2_d)$.
	It is readily verified that $a_0(\mathscr{A}_d)a_0\subset \mathscr{A}_d$, and thus $\mu$ is indeed defined.
	It is an elementary exercise to produce an isometry $V:\mathbb{H}^2_d(\mu)\to\mathbb{H}^2_d(\lambda)$ satisfying
	\[ V(a+N_\mu)=aa_0+N_\lambda, \quad a\in\A. \]
	We see immediately that $\nbran V$ is a closed $\Pi_\lambda-$invariant subspace of $\mathbb{H}^2_d(\lambda)$ and that $\Pi_\lambda|_{\nbran V}$ is unitarily equivalent to $\Pi_\mu$.

	Suppose that $\lambda$ is absolutely continuous.
	Then every element of $\mathbb{H}^2_d(\lambda)$ is a weak$-*$ continuous vector.
	In particular, $V(I + N_\mu)$ is weak$-*$ continuous.
	For any $b\in\mathscr{A}_d$, we note that $V^*\pi_\lambda(b)V=\pi_\mu(b)$, and thus
	\[ \mu(b)=\ip{I + N_\mu}{\pi_\mu(b)(I + N_\mu)}=\ip{V(I + N_\mu)}{\pi_\lambda(b)V(I + N_\mu)}. \]
	Therefore, $\mu$ is absolutely continuous.

	Now suppose instead that $\lambda$ is of von Neumann type.
	Let $\mathfrak{W}_\mu$ and $\mathfrak{W}_\lambda$ denote the weak$-*$ closures of $\pi_\mu(\A)$ and $\pi_\lambda(\A)$, respectively.
	Clearly, $\nbran V$ is $\mathfrak{W}_\lambda$ invariant.
	As $\Pi_\lambda$ is of von Neumann type, it follows that $\mathfrak{W}_\lambda$ is a von Neumann algebra, and thus $\nbran V$ is $\Pi_\lambda-$reducing.
	In particular, $\mathfrak{W}_\lambda|_{\nbran V}$ is a von Neumann algebra.
	As $\mathfrak{W}_\lambda|_{\nbran V }$ is unitarily equivalent to $\mathfrak{W}_\mu$, we see that $\Pi_\mu$ is of von Neumann type.
	That is, $\mu$ is of von Neumann type.
\end{proof}
\begin{remark}
If $\la \in \posncm$ is of dilation--type, it is generally not true that $\la ^{(k)}$ is of dilation--type and hence the classical proof as described in Remark \ref{classproof} breaks down. The next section, Section \ref{diltypesect} provides an example of a dilation--type NC measure, $\xi \in \posncm$, so that $\Xi ^{[2]} := \xi \circ \mr{Ad} _{L_2 ^* , L_2} \in \posncm$ is weak$-*$ continuous and such that $\xi ^{(2)} = \xi (L_2 ^* (\cdot ) ) \in  \ncm$ is analytic but not weak$-*$ continuous, see Proposition \ref{countereg}.
\end{remark}

\begin{prop} \label{keyprop}
	Let $\mu\in\ncm$ and $k\in\{1,2,\ldots,d\}$.
	If $\mu$ is absolutely continuous, then $\mu^{(k)}$ is absolutely continuous.
	If $\mu$ is of von Neumann type, then $\mu^{(k)}$ is of von Neumann type.
\end{prop}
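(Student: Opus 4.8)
The plan is to reduce the claim to Lemma \ref{posvN} via a polarization identity that expresses the backward shift $L_k^*$ acting on the left in terms of conjugations by elements of $\A$. Concretely, for every $b \in \mathscr{A}_d$ one has
\begin{align*}
	4 L_k^* b &= (I + L_k)^* b (I + L_k) - (I - L_k)^* b (I - L_k) \\
	&\qquad + i\big[ (I + iL_k)^* b (I + iL_k) - (I - iL_k)^* b (I - iL_k) \big],
\end{align*}
as a direct expansion shows (the $b$ and $L_k^* b L_k$ contributions cancel within each bracket, and the $bL_k$ contributions cancel between the two brackets). Since $a_0 := I \pm L_k$ and $a_0 := I \pm iL_k$ all lie in $\A$, this identity writes $4\mu^{(k)}$, for any $\mu \in \ncm$, as a linear combination of four functionals of the form $b \mapsto \mu(a_0^* b a_0)$ with $a_0 \in \A$ --- exactly the shape treated by Lemma \ref{posvN} once $\mu$ is replaced by a positive NC measure. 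Note also that $(I\pm L_k)^* b (I\pm L_k)$ and $(I\pm iL_k)^* b (I\pm iL_k)$ do lie in $\mathscr{A}_d$, since $b\mapsto L_k^* b$ and $b\mapsto bL_k$ map $\mathscr{A}_d$ into itself.

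For the absolutely continuous case, I would first apply Corollary \ref{ACWstock} to write $\mu = (\mu_1 - \mu_2) + i(\mu_3 - \mu_4)$ with each $\mu_j \in \posncm$ absolutely continuous. Applying the polarization identity to each $\mu_j$ and invoking Lemma \ref{posvN}, each $\mu_j^{(k)}$ is a linear combination of four absolutely continuous positive NC measures. Since the absolutely continuous NC measures form a linear subspace of $\ncm$ (a linear combination of weak$-*$ continuous extensions to $\scr{T}_d$ again extends weak$-*$ continuously), each $\mu_j^{(k)}$, and hence $\mu^{(k)} = (\mu_1^{(k)} - \mu_2^{(k)}) + i(\mu_3^{(k)} - \mu_4^{(k)})$, is absolutely continuous.

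For the von Neumann case, I would use the definition of type for complex NC measures: fix a Wittstock decomposition $\vec{\mu} = (\mu_j)_{j=1}^4 \in \mathscr{W}(\mu)$ with $|\vec{\mu}|$ of von Neumann type. Since $\mu_j \leq |\vec{\mu}|$ and the von Neumann type positive NC measures form a hereditary cone by Theorem \ref{dilvNcone}, each $\mu_j$ is of von Neumann type, so Lemma \ref{posvN} together with the polarization identity gives $4\mu_j^{(k)} = (\nu_{j,1} - \nu_{j,2}) + i(\nu_{j,3} - \nu_{j,4})$ with all $\nu_{j,l} \in \posncm$ of von Neumann type. Collecting the real and imaginary parts of $4\mu^{(k)} = 4\mu_1^{(k)} - 4\mu_2^{(k)} + i(4\mu_3^{(k)} - 4\mu_4^{(k)})$ then exhibits $4\mu^{(k)} = (B_1 - B_2) + i(B_3 - B_4)$, where each $B_l$ is a finite sum of the $\nu_{j,m}$ and hence, by the cone property again, a von Neumann type positive NC measure. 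Thus $\tfrac14(B_1, B_2, B_3, B_4)$ is a Wittstock decomposition of $\mu^{(k)}$ whose total variation $\tfrac14(B_1 + B_2 + B_3 + B_4)$ is of von Neumann type, so $\mu^{(k)}$ is of von Neumann type.

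The only genuine input is Lemma \ref{posvN}; the remaining steps are bookkeeping. I expect the main point to be spotting the polarization identity --- the fact that $L_k^* b$ is recovered from conjugations by the four elements $I \pm L_k,\ I \pm iL_k \in \A$. The one place to tread carefully is that ``von Neumann type'' for complex NC measures is defined through the existence of a suitable Wittstock decomposition rather than as a subspace condition, so in that case one must explicitly assemble the decomposition $(B_1, \ldots, B_4)$ rather than conclude by linearity as in the absolutely continuous case.
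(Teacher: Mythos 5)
Your proposal is correct and follows essentially the same route as the paper: the authors use exactly this polarization identity, writing $\mu^{(k)}$ as a Wittstock combination of the four positive functionals $b\mapsto\mu\left( (I\pm L_k)^*b(I\pm L_k)\right)$ and $b\mapsto\mu\left( (I\pm iL_k)^*b(I\pm iL_k)\right)$, reducing to Lemma \ref{posvN} in the positive case and then performing the same $16$-term Wittstock bookkeeping for general $\mu$. The only (harmless) difference is that you dispatch the absolutely continuous case by linearity of weak$-*$ continuous extendability, whereas the paper runs the Wittstock assembly uniformly for both types.
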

\begin{proof}
	Let $\ft\in\{\rm{ac},\rm{vN}\}$, and let $\mu$ be of type $\ft$.
	First assume that $\mu$ is positive.
	For each $b\in \mathscr{A}_d$, set
	\[ \phi_1(b)=\frac{1}{2}\mu( (I+L_k)^*b(I+L_k)), \quad \phi_2(b)=\frac{1}{2}\mu( (I-L_k)^*b(I-L_k)), \]\[ \phi_3(b)=\frac{1}{2}\mu( (I+iL_k)^*b(I+iL_k)), \quad \phi_4(b)=\frac{1}{2}\mu( (I-iL_k)^*b(I-iL_k)). \]
	Then,
	\ba \phi_1(b)-\phi_2(b)+i(\phi_3(b)-\phi_4(b)) & = & \frac{1}{2} \mu(L_k^*b+bL_k)+\frac{i}{2} \mu(-iL_k^*b+ib L_k) \nn \\
	& = &\mu(b), \nn \ea
	for all $b\in\mathscr{A}_d$ and so $(\phi_1,\phi_2,\phi_3,\phi_4)\in \mathscr{W}(\mu^{(k)})$.
	Because $\mu$ is type $\ft$, it follows from Lemma \ref{posvN} that each $\phi_j$ is type $\ft$, and thus $\mu^{(k)}$ is type $\ft$.

	Now consider the general case of $\mu\in\ncm$.
	Since $\mu$ is type $\ft$, there exists a $(\mu_1,\mu_2,\mu_3,\mu_4)\in\mathscr{W}(\mu)$ such that each $\mu_j$ is of type $\ft$.
	It follows that $\mu_j^{(k)}$ is type $\ft$ for each $j,k$.
	Let $(\phi_{j,\ell})_{\ell=1}^4$ be a Wittstock decomposition of $\mu_j$ where each $\phi_{j,\ell}$ is type $\ft$.
	Then
	\begin{eqnarray*}
		\mu^{(k)} &=& \mu_1^{(k)}-\mu_2^{(k)}+i( \mu_3^{(k)}-\mu_4^{(k)} ) \\
		&=& \phi_{1,1}-\phi_{1,2}+ i(\phi_{1,3}-\phi_{1,4})  - (\phi_{2,1}-\phi_{2,2}+ i(\phi_{2,3}-\phi_{2,4})) \\
		& & +i(\phi_{3,1}-\phi_{3,2}+ i(\phi_{3,3}-\phi_{3,4})) - i(\phi_{4,1}-\phi_{4,2}+ i(\phi_{4,3}-\phi_{4,4})) \\
		& = & (\phi_{1,1}+\phi_{2,2}+\phi_{3,4}+\phi_{4,3}) - (\phi_{1,2}+\phi_{2,1}+\phi_{3,3}+\phi_{4,4}) \\
		& & +i(\phi_{1,3}+\phi_{2,4}+\phi_{3,1}+\phi_{4,2}) - i(\phi_{4,1}+\phi_{3,2}+\phi_{2,3}+\phi_{1,4}). 
	\end{eqnarray*}
	As each $\phi_{j,\ell}$ is of type $\ft$, it follows that $\mu^{(k)}$ has a Wittstock decomposition $\vec{\psi}$ such that $|\vec{\psi}|$ is of type $\ft$, and therefore $\mu^{(k)}$ is of type $\ft$.
\end{proof}
\begin{proof}{ (of Theorem \ref{ncFnM})}
	By Corollary \ref{BabyFnM} and Lemma \ref{bwshift}, if $\mu \in \ncm$ annihilates $\Anot$, then $\mu _{\rm{vN}}$ annihilates $\A$.
	By Proposition \ref{keyprop}, for any $k\in\{1,2,\ldots,d\}$, we see that $\mu _{vN}^{(k)}$ is of von Neumann type.
	Since, by definition, $\mu _{\rm{vN}}^{(k)} = \mu _{\rm{vN}} (L_k^*\cdot)$  and $\mu _{\rm{vN}}$ annihilates $\A$, we have that $\mu^{(k)} _{\rm{vN}}$ is a von Neumann type NC measure which annihilates $\Anot$.
	By Lemma \ref{bwshift} again, $\mu ^{(k)} _{\rm{vN}}$ annihilates $\A$ so that 
	$$ 0 = \mu _{\rm{vN}} ^{(k)} (I) = \mu _{\rm{vN}} (L_k ^* ). $$
	Proceeding inductively we obtain that
	$$ \mu _{\rm{vN}} \left( L^{\alpha *} \right) =0, $$
	for any $\alpha \in \F ^d$ and we conclude that $\mu _{\rm{vN}} \equiv 0$.
\end{proof}

\begin{remark} \label{relate}
	An NC F\&M Riesz theorem was previously obtained in \cite[Theorem A]{CMT-analytic}, by R. Clou\^atre and the second two named authours of the present paper,  using different techniques.
	Both Theorem \ref{ncFnM} and \cite[Theorem A]{CMT-analytic} disallow the presence of von Neumann type summands in their corresponding models of analytic linear functionals on $\mathscr{A}_d$, though what this means in these two cases is different.
	Both papers find that analytic linear functionals on $\mathscr{A}_d$ need not have weak$-*$ continuous extensions to the weak$-*$ closure, $\mathscr{T}_d$, of $\mathscr{A}_d$, as we detail in the next section.
	We also remark that the results of \cite{CMT-analytic} and the current paper describe the obstruction to weak$-*$ continuous extension in different ways. 

	To compare the two sets of results, fix $\lambda\in\ncm$ and suppose $\lambda(\A)=\{0\}$, as this is the definition of analyticity used in \cite{CMT-analytic}.
	There, the free disk system is viewed as embedded, completely isometrically, inside the Cuntz algebra, $\scr{O} _d$, via the quotient map $q:\scr{E}_d\to\scr{O}_d$ whose kernel is the compact operators.
	Let $\Lambda$ be an extension of $\lambda$ to $\scr{O}_d$.
	In \cite[Theorem A]{CMT-analytic}, it is proved that there exists a $*-$representation $\pi:\scr{O}_d\to\scr{L}(\mathcal{H})$, a $\pi(\scr{O}_d)-$cyclic vector $h_1\in\mathcal{H}$ and a vector $h_2\in\mathcal{H}$ such that
	\[ \Lambda(x)=\ip{h_2}{\pi(x)h_1}, \quad x\in\scr{O}_d, \]
	$\|\Lambda\|=\|h_1\|^2=\|h_2\|^2$, and the restriction of $\pi(L):=(\pi(L_1),\ldots,\pi(L_d))$ to the norm closure of $\pi(\A)h_1$ is unitarily equivalent to $L$.
	The triple $(\pi,h_1,h_2)$ is referred to as a ``Riesz representation'' of the functional $\Lambda$.
	The particular form of the representation implies that $\pi(L)$ has no von Neumann--type summand, as is noted in \cite{CMT-analytic}.
	We note that
	\[ \lambda(b)=\ip{h_2}{\pi(q(b))h_1}, \quad b\in\scr{A}_d. \]
	The representation $\pi$ and the vector $h_1$ are such that $|\hat{\Lambda}|(f)=\ip{h_1}{\hat{\pi}(f)h_1}$, where $\hat{\Lambda}$ and $\hat{\pi}$ denote their weak$-*$ continuous extensions to the second dual of $\scr{O}_d$ and $|\hat{\Lambda}|$ is the `radial' part of the polar decomposition a normal linear functional.
	In the present paper, we find that there exists a Wittstock decomposition $\vec{\lambda}$ of $\lambda$ and a $\Pi_{|\vec{\lambda}|}$-Toeplitz operator $T$, such that
	\[ \lambda(a_2^*a_1)=\ip{I + N_{|\vec{\lambda}|}}{\pi_{|\vec{\lambda}|}(a_2)^*T\pi_{|\vec{\lambda}|}(a_1)(I + N_{|\vec{\lambda} |})}, \quad a_1,a_2\in\A, \]
	with $\Pi_{|\vec{\lambda}|}=(\pi_{|\vec{\lambda}|}(L_1),\ldots,\pi_{|\vec{\lambda}|}(L_d))$ having no von Neumann--type summand.
	One will recall that we assume in Theorem \ref{ncFnM} that $\la \in \ncm$ annihilates $\Anot$, not $\A$.
	However, this is equivalent to the analyticity assumptions of \cite[Theorem A]{CMT-analytic}, since a vector functional applied to a $*-$representation of $\scr{O}_d$ annihilates $\Anot$ if and only if it annihilates $\A$; see Lemma \ref{bwshift}.
	
	As another point of contrast,	our total variation $|\vec{\lambda}| = \la _1 + \la _2 + \la _3 + \la _4$ is not uniquely determined by $\lambda$.
	From the polar decomposition $\hat{\Lambda}(x)=|\hat{\Lambda}|(v^*x)$, we may readily produce a Wittstock decomposition $\vec{\nu}$ of $\lambda$, but it is not generally of the type that have proven useful in this paper, nor of course is $|\vec{\nu}|$ equal to $|\hat{\Lambda}|$.
	We also remark that, unless the GNS row isometry of $|\vec{\lambda}|$ is Cuntz, $|\vec{\lambda}|$ need not have a unique positive Kre\u{\i}n--Arveson extension to $\scr{E} _d$ (see the proposition below).
	
	
	For yet another point of contrast, it follows from \cite[Corollary 5.2]{CMT-analytic} that $\lambda$ can be analytic and admit a weak$-*$ continuous extension to $\mathscr{A}_d$, with $\pi(L)$ having a dilation--type summand.
	In the representation of this paper, $\lambda$ admits a weak$-*$ continuous extension to $\mathscr{T}_d$ precisely when $\Pi_{|\vec{\lambda}|}$ is absolutely continuous (for some $\vec{\lambda}\in\mathscr{W}(\lambda)$).
	Despite these differences, we can collect some necessary and sufficient conditions for the absolute continuity of an analytic $\lambda$.
	By combining Proposition \ref{P:ACiffTVAC}, Lemma \ref{ACvecfun} and \cite[Theorem B]{CMT-analytic}, we see that the following conditions are equivalent, where we suppose $\lambda\in\ncm$ is analytic, $\lambda(\Anot)=\{0\}$.
	\begin{enumerate}
		\item[(i)] $\lambda$ extends weak$-*$ continuously to $\scr{T}_d$.
		\item[(ii)] There exists a $\vec{\lambda}\in\mathscr{W}(\lambda)$ such that $|\vec{\lambda}|$ is absolutely continuous.
		\item[(iii)] There exist $f,g\in\mathbb{H}^2_d$ such that $\lambda(b)=\ip{f}{bg}$, $b\in\mathscr{A}_d$.
		\item[(iv)] The weak$-*$ continuous extension of $\lambda$ to the second dual of $\mathscr{A}_d$ annihilates $\{\hat{a}^*\mathfrak{q}-\mathfrak{q}\hat{a}^* | \ a\in\A\}$, where $\mathfrak{q}$ is the free semi-group structure projection of the second dual of $\A$.
	\end{enumerate}
	It should be noted that the equivalences (i)$\Leftrightarrow$(ii)$\Leftrightarrow$(iii) do not require analyticity of $\lambda$.
	For (iv), we remark that $\lambda$ extends weak$-*$ continuously $\scr{T}_d$ if and only if $b\mapsto \lambda(b)-\lambda(I)m(b)$ does, and so the different conditions for analyticity from these two papers do not present any difficulties here.
	
\end{remark}

\begin{prop}
	If $\mu \in \posncm$ is Cuntz type, then $\mu$ has a unique positive Kre\u{\i}n--Arveson extension $\La$ to the Cuntz--Toeplitz algebra, $\scr{E} _d$.
\end{prop}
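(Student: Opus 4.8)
The plan is to produce one positive extension explicitly from the GNS data of $\mu$, and then show that every positive extension coincides with it; the Cuntz hypothesis enters only in the uniqueness half. For existence, note that the GNS row isometry $\Pi_\mu=(\Pi_{\mu;1},\dots,\Pi_{\mu;d})$ on $\hardy (\mu )$ is in particular a row isometry, so by the universal property of $\scr{E} _d$ there is a unique $*$-representation $\rho:\scr{E} _d\to\mc{L}(\hardy (\mu ))$ with $\rho(L_k)=\Pi_{\mu;k}$ (when $\mu$ is Cuntz type one may equally well factor $\rho$ through the quotient $q:\scr{E} _d\to\scr{O} _d$, using the representation of $\scr{O} _d$ determined by the Cuntz row isometry $\Pi_\mu$). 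I would then set $\Lambda(X):=\ip{I+N_\mu}{\rho(X)(I+N_\mu)}$ for $X\in\scr{E} _d$; this is a positive functional, and since $\rho|_{\A}$ and $\pi_\mu$ are continuous unital algebra homomorphisms agreeing on the generators $I,L_1,\dots,L_d$, they agree on all of $\A$. Hence $\Lambda$ agrees with $\mu$ on $\A$, on $\A+\A^*$ by taking adjoints (using that a positive $\mu$ is self-adjoint), and on $\scr{A} _d$ by continuity. (Existence alone is of course also immediate from Arveson's extension theorem, but the explicit model $\Lambda$ is what drives uniqueness.)

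For uniqueness, let $\Lambda'$ be any positive extension of $\mu$ to $\scr{E} _d$, and let $(\sigma,\cH',\eta)$ be its GNS triple, so $\Lambda'(X)=\ip{\eta}{\sigma(X)\eta}$ with $\eta$ cyclic for $\sigma(\scr{E} _d)$. I would define $W:\hardy (\mu )\to\cH'$ by $W(a+N_\mu)=\sigma(a)\eta$ for $a\in\A$. The semi-Dirichlet property $\A^*\A\subseteq\scr{A} _d$ makes this well-defined and isometric, since $\|\sigma(a)\eta\|^2=\Lambda'(a^*a)=\mu(a^*a)=\|a+N_\mu\|^2_{\hardy (\mu )}$, and one checks directly that $W\Pi_{\mu;k}=\sigma(L_k)W$ for each $k$. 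Writing $\cH_0:=\ov{\ran{W}}$, the subspace $\cH_0$ is invariant under every $\sigma(L_k)$, and the restriction of the row isometry $\sigma(L)$ to $\cH_0$ is unitarily equivalent via $W$ to $\Pi_\mu$, hence is a Cuntz (surjective) row isometry.

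The heart of the argument is the claim that an invariant subspace of a row isometry on which the row isometry restricts to a Cuntz row isometry must be reducing. With $P_0$ the projection onto $\cH_0$, the Cuntz relation for $\sigma(L)|_{\cH_0}$ gives $\sum_k\sigma(L_k)P_0\sigma(L_k)^*\xi=\xi$ for $\xi\in\cH_0$, whence $\sum_k\|P_0\sigma(L_k)^*\xi\|^2=\|\xi\|^2$; on the other hand $\sum_k\|\sigma(L_k)^*\xi\|^2\le\|\xi\|^2$ because $\sum_k\sigma(L_k)\sigma(L_k)^*\le I$ in any $*$-representation of $\scr{E} _d$. Since $\|P_0\sigma(L_k)^*\xi\|\le\|\sigma(L_k)^*\xi\|$ term by term, all these inequalities are equalities, forcing $\sigma(L_k)^*\xi\in\cH_0$ for every $k$; thus $\cH_0$ reduces each $\sigma(L_k)$ and hence reduces $\sigma(\scr{E} _d)$. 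As $\eta=W(I+N_\mu)\in\cH_0$ and $\eta$ is $\sigma(\scr{E} _d)$-cyclic, we get $\cH_0=\cH'$, so $W$ is unitary. Then $W^*\sigma(L_k)W=\Pi_{\mu;k}=\rho(L_k)$ for all $k$, so $W^*\sigma(\cdot)W=\rho$ on $\scr{E} _d$, and therefore $\Lambda'(X)=\ip{W(I+N_\mu)}{\sigma(X)W(I+N_\mu)}=\ip{I+N_\mu}{\rho(X)(I+N_\mu)}=\Lambda(X)$ for all $X\in\scr{E} _d$. The only step demanding genuine care is the reducibility claim just sketched; everything else is routine GNS bookkeeping.
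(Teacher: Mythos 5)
Your proof is correct, and its overall skeleton matches the paper's: both arguments embed $\hardy (\mu)$ isometrically into the GNS space of an arbitrary positive extension via $a+N_\mu\mapsto\sigma(a)\eta$ (well-defined and isometric by the semi-Dirichlet property), show the range is reducing, and conclude from cyclicity of $\eta$ that the embedding is onto, which pins down the extension. The difference lies entirely in how reducibility of the range is established, and there your route is genuinely distinct. The paper invokes an external result (\cite[Theorem 6.4]{JM-freeCE}) asserting that, because $\Pi_\mu$ is Cuntz, every vector of $\hardy(\mu)$ is a norm limit of cosets $x_n+N_\mu$ with $x_n\in\Anot$; it then verifies the co-invariance relation $\Pi_{\Lambda;k}^*V=V\Pi_{\mu;k}^*$ directly by moving $L_k^*$ onto the constant-term-free polynomials $x_n$. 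You instead prove a clean, self-contained operator-theoretic lemma: an invariant subspace of a row isometry on which the restriction is Cuntz is automatically reducing, via the equality chain $\|\xi\|^2=\sum_k\|P_0\sigma(L_k)^*\xi\|^2\leq\sum_k\|\sigma(L_k)^*\xi\|^2\leq\|\xi\|^2$ (the last inequality because $I-\sum_k L_kL_k^*$ is positive in $\scr{E}_d$). This is the exact multivariate analogue of the classical fact that an invariant subspace on which an isometry restricts to a unitary is reducing, and it buys you independence from the density result cited in the paper; the paper's approach, in exchange, exhibits the concrete approximating polynomials, which is the form of Cuntz-ness it reuses elsewhere (e.g.\ in Lemma \ref{bwshift}). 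Your explicit existence step via the universal property of $\scr{E}_d$ is also fine, though, as you note, it is implicit in the uniqueness computation since the limiting formula $\Lambda(z)=\ip{I+N_\mu}{\pi_\mu(z)(I+N_\mu)}$ is itself a positive extension.
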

\begin{proof}
	Let $\La : \scr{E} _d \rightarrow \C$ be any Kre\u{\i}n--Arveson (positive) extension of $\la \in \posncm$.
	Apply the GNS construction to $\left( \La , \scr{E} _d \right)$ to obtain a GNS Hilbert space $L^2 (\La )$ and a $*-$representation $\pi_\La$ satisfying
	$$ \La (x) = \ip{ I + N_\La}{\pi _\La (x) (I + N _\La) }; \quad \quad x \in \scr{E} _d. $$
	By construction, $I + N_\La$ is cyclic for the GNS row isometry $\Pi _\La$.
	Observe that there is an isometry $V:\bH ^2 _d (\la ) \to L^2 (\La )$ with $\Pi_\Lambda$-invariant range.
	Indeed, for any $a\in\A$,
	\[ \| a + N _\la \| ^2  =  \la \left( a^*a \right)  = \La \left( a^*a \right)  =  \| a + N _\La \|^2 . \]
	Thus, there is an isometry $V:\bH^2_d(\la)\to L^2(\La)$ determined by $V(a+N_\lambda)=a+N_\Lambda$, $a\in\A$.
	Plainly, $V\Pi_{\lambda;k}=\Pi_{\Lambda;k}V$ for each $k$.
	
	Next, observe that $V\bH ^2 _d (\la)$ is $\Pi _\La-$reducing and $\Pi _\La | _{V\bH ^2 _d (\la )}$ is unitarily equivalent to $\Pi _\la$.
	Indeed, $\Pi _\la$ is Cuntz, and so any given element $x \in \bH^2 _d (\la )$ is the norm-limit of vectors $x_n + N_\la$, where $x_n\in\Anot$, as shown in \cite[Theorem 6.4]{JM-freeCE}.
	Hence, for any $z \in \scr{E} _d$ and $k=1,2,\ldots,d$, 
	\ba
		\ip{z + N_\La}{\Pi _{\La ;k}^* Vx} & = & \lim_{n\to\infty} \ip{z + N _\la}{\Pi_{\La ;k}^* V(x_n  + N_\la) } \nn \\
		& = & \lim_{n\to\infty} \ip{L_kz + N_\La}{x_n  + N_\La} \nn \\
		& = & \lim_{n\to\infty} \ip{z + N_\La}{(L_k ^* x_n)  + N _\La } \nn \\
		& = & \lim_{n\to\infty} \ip{z + N_\Lambda}{V\Pi_{\lambda;k}^*(x_n+N_\lambda)} \nn \\
		& = & \ip{z + N_\La}{V\Pi_{\la ;k}^*x} . \nn
	\ea
	That is, $\Pi_{\Lambda;k}^*V=V\Pi_{\lambda;k}^*$ for each $k$, whence $V\pi_{\lambda}(y)=\pi_{\Lambda}(y)V$ for each $y\in\scr{E}_d$.

	Finally, note that $I + N_\La = V(I + N _\la)$.
	Since $\pi_\Lambda(\mathscr{E}_d)(I + N_\Lambda)$ is dense in $L^2(\Lambda)$ and $\pi_\lambda(\mathscr{E}_d)(I + N_\lambda)$ is dense in $\bH^2_d(\lambda)$, it follows that $V\bH^2_d(\lambda)=L^2(\Lambda)$, showing that $V$ is in fact a surjective isometry.
	Therefore, for any $z\in\scr{E}_d$,
	\[ \Lambda(z)=\ip{I + N_\lambda}{\pi_\lambda(z)(I + N_\lambda)}. \qedhere \]
\end{proof}

\section{A Dilation-type example}
\label{diltypesect}

Recall that there is, in essence, a bijection between positive, finite and regular Borel measures on the circle and the set of Herglotz functions in the disk, \emph{i.e.} analytic functions in the complex unit disk with positive semi-definite real part.
This correspondence extends to positive NC measures and non-commutative (left) Herglotz functions in $\B ^d _\N$, $\mu \leftrightarrow H_\mu$; see \cite{JM-freeAC,JM-freeCE}.
A fractional linear transformation, the so-called \emph{Cayley transform}, then implements a bijection between the left NC Schur class of contractive NC functions in $\B ^d _\N$ and and the left NC Herglotz class.
If $\mu \in \posncm$ is the (essentially) unique NC measure corresponding to the contractive NC function $b \in [ \mult ] _1$, we write $\mu = \mu _b$, and $\mu _b$ is called the NC Clark measure of $b$; see \cite[Section 3]{JM-ncFatou} for details. 

By \cite[Corollary 7.25]{JM-ncFatou}, if $b \in [\mult ] _1$ is \emph{inner}, \emph{i.e.} an isometric left multiplier, then its NC Clark measure is singular, so that its GNS representation $\Pi _b := \Pi _{\mu _b}$ is a Cuntz row isometry which can be decomposed as the direct sum of a dilation--type row isometry and a von Neumann type row isometry \cite{JM-ncld}. 

Classically, any sum of Dirac point masses is singular with respect to Lebesgue measure on the circle.
Motivated by this,  consider the positive linear functional $\xi \in ( \scr{A} _2^\dag )_+$ defined by 
$$ \xi (L ^\alpha ) = \left\{ \begin{array}{cc} 0 & 2 \in \alpha \\ 1 & 2 \notin \alpha \end{array} \right. ; \quad \quad \alpha \in \F ^2,  $$
and $\xi (I) = 1$.
Here, $2\notin\alpha$ is used to indicate that $\alpha$ does not contain the `letter' $2$.
One may think of $\xi$ as a `Dirac point mass' at the point $(1,0) \in \partial\B ^2_1$, where $\B^2_1$ is the first level of the NC unit ball $\B^2_\N$.
Setting $Z:=(1,0)$, we note that $\xi(L^\alpha)=Z^\alpha$ for all words $\alpha$.
Since $Z$ is a row contraction, it follows from results of Popescu that the map $\xi$ extends to a positive linear functional on $\scr{A} _2$ \cite[Theorem 2.1]{Pop-NCdisk}. 

Before continuing, we remark that the example of this section is related to \cite[Example 2]{CMT-analytic}, which is itself related to atomic representations of \cite{DP-inv}. This example is also a special case of \cite[Example 5.1]{DKS-finrow}.
However, we here choose to start from the linear functional, $\xi$, rather than the functional's representation as a vector functional on a representation, to emphasize the NC function theory associated with the NC measure.

\begin{claim}
	$L_2 +N_\xi$ is a wandering vector for $\Pi_\xi$ and $\Pi_\xi$ has vanishing von Neumann part.
\end{claim}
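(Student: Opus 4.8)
The plan is to verify the wandering--vector property by a short moment computation and then to read off the vanishing of the von Neumann part from Lemma~\ref{vNvsdil}. Write $w := L_2 + N_\xi \in \hardy(\xi)$. Because $\Pi_\xi = (\Pi_{\xi;1},\Pi_{\xi;2})$ is a row isometry, the vectors $\Pi_\xi^\alpha w$ and $\Pi_\xi^\beta w$ are automatically orthogonal whenever the words $\alpha,\beta$ are incomparable, so to see that $\{\Pi_\xi^\alpha w\}_{\alpha\in\F ^2}$ is orthonormal it suffices to check $\|w\| = 1$ and $\ip{w}{\Pi_\xi^\gamma w}_\xi = 0$ for every nonempty word $\gamma$. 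The norm is immediate, $\|w\|^2 = \xi(L_2^* L_2) = \xi(I) = 1$. For the inner products I would use $\ip{w}{\Pi_\xi^\gamma w}_\xi = \xi(L_2^* L^\gamma L_2) = \xi(L_2^* L^{\gamma 2})$ and split on the first letter of $\gamma$: if $\gamma$ begins with the letter $1$ then $L_2^* L^{\gamma 2} = 0$ since $L_1$ and $L_2$ have orthogonal ranges, while if $\gamma = 2\gamma'$ then $L_2^* L^{\gamma 2} = L^{\gamma' 2}$, whose $\xi$--value is $0$ by the definition of $\xi$ because the word $\gamma' 2$ contains the letter $2$. Either way the inner product is $0$, so $w$ is a unit wandering vector for $\Pi_\xi$.

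For the von Neumann part I would invoke Lemma~\ref{vNvsdil}. Since every wandering vector of a row isometry lies in its space of weak$-*$ continuous vectors, $w \in \mr{WC}(\Pi_\xi)$. Set $\cH_0 := \bigvee_{\beta,\gamma\in\F ^2}\Pi_\xi^\beta\Pi_\xi^{\gamma *}\,\mr{WC}(\Pi_\xi)$; by Lemma~\ref{vNvsdil} this subspace is $\Pi_\xi$-reducing and $\Pi_\xi|_{\cH_0^\perp}$ is precisely the von Neumann--type summand of $\Pi_\xi$, so it is enough to show $\cH_0 = \hardy(\xi)$. Taking $\beta = \varnothing$ and $\gamma = 2$ shows $\Pi_{\xi;2}^* w \in \cH_0$; and since $w = \Pi_{\xi;2}(I + N_\xi)$ while $\Pi_{\xi;2}$ is an isometry,
\[ \Pi_{\xi;2}^* w = \Pi_{\xi;2}^* \Pi_{\xi;2}(I + N_\xi) = I + N_\xi, \]
so $I + N_\xi \in \cH_0$. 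As $I + N_\xi$ is cyclic for $\Pi_\xi$ and $\cH_0$ is $\Pi_\xi$-invariant, $\cH_0 \supseteq \bigvee_{\alpha \in \F ^2}\Pi_\xi^\alpha(I + N_\xi) = \hardy(\xi)$, hence $\cH_0 = \hardy(\xi)$ and $\cH_0^\perp = \{0\}$. By Lemma~\ref{vNvsdil}, $\Pi_\xi$ has vanishing von Neumann part.

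I do not expect a real obstacle: the argument is elementary throughout. The single point needing a little care is the case split in the wandering computation --- writing $L^\gamma L_2 = L^{\gamma 2}$ and stripping the first letter off $\gamma$ is exactly what brings the defining property of $\xi$ (it annihilates $L^\beta$ for every word $\beta$ containing the letter $2$) to bear. The one conceptual step worth isolating is the identity $\Pi_{\xi;2}^* w = I + N_\xi$, which says that the lone wandering vector $w$ already recovers the cyclic vector under $\Pi_{\xi;2}^*$; this forces the $\Pi_\xi$-reducing subspace generated by $\mr{WC}(\Pi_\xi)$ to be all of $\hardy(\xi)$, leaving no room for a von Neumann summand.
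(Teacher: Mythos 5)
Your proposal is correct and follows essentially the same route as the paper: verify the wandering property by the moment computation $\xi(L_2^*L^\gamma L_2)=0$ for $\gamma\neq\varnothing$, then use $\Pi_{\xi;2}^*(L_2+N_\xi)=I+N_\xi$ together with cyclicity of $I+N_\xi$ and Lemma \ref{vNvsdil} to kill the von Neumann summand. Your case split on the first letter of $\gamma$ just makes explicit a computation the paper leaves to the reader, and your appeal to the fact that wandering vectors are weak$-*$ continuous is justified in the paper by noting that $\xi_w$ is a multiple of NC Lebesgue measure.
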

\begin{proof}
	Note that any wandering vector for $\Pi _\xi$ is always a weak$-*$ continuous vector.
	Indeed, if $w$ is wandering for $\Pi _\xi$, then 
	\ba
		\xi _w (L^\alpha ) & = & \ip{w}{\Pi_\xi ^\alpha w}_{\mathbb{H} ^2 _2 (\xi)}  \nn \\
		& = & \| w\| ^2 _{\mathbb{H} ^2 _2 (\xi)} \delta _{\alpha, \varnothing} = \| w \| ^2 m (L^\alpha), \nn
	\ea
	is a constant multiple of NC Lebesgue measure, and hence is absolutely continuous. 
	
	To see that $L_2 + N_\xi$ is wandering for $\Pi _\xi$, note that
	$$ \ip{L_2 +N_\xi }{\Pi _\xi ^\alpha (L_2 + N_\xi )} = \xi ( L_2 ^* L^\alpha L_2 ) = \delta _{\alpha, \varnothing}. $$
	However $L_2 + N_\xi$ is also $*-$cyclic for $\Pi _\xi$ since $\pi _\xi (L_2 ) ^* (L_2 +N_\xi) =I + N_\xi$ , which is cyclic for $\Pi _\xi$.
	This means that the smallest reducing subspace that contains the weak$-*$ continuous vector $L_2 + N_\xi$ is all of $\mathbb{H} ^2 _2 (\xi)$, so that $\mathbb{H} ^2 _2 (\xi _{\rm{vN}} ) = \{ 0 \}$ by Lemma \ref{vNvsdil}. 
\end{proof}

In what follows, if $\om = i_1 \cdots i_n \in \F ^d$, $i_k \in \{ 1 , \cdots , d \}$, is any word, then we set $\om ^\mrt := i_n \cdots i_1$.
This letter reversal map is an involution on the free monoid.

\begin{claim}
	The NC measure $\xi$ is the NC Clark measure of $b_\xi (Z) = Z_1$, $Z\in\mathbb{B}^2_{\mathbb{N}}$; a left-inner NC function. 
	Thus, $\Pi _\xi$ is purely of dilation--type.
\end{claim}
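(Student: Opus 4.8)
The plan is to identify the NC Herglotz function attached to $\xi$ and recognize it as the Cayley transform of the NC function $b_\xi(Z) = Z_1$. Recall from \cite{JM-freeAC,JM-freeCE} and \cite[Section 3]{JM-ncFatou} that a positive NC measure is determined by, and determines, an NC Herglotz function whose power series coefficients are read off from the moments $\mu(L^\alpha)$, and that the NC Clark measure $\mu_b$ of a contractive NC function $b \in [\mult]_1$ is the unique positive NC measure whose Herglotz function is the Cayley transform $(I+b)(I-b)^{-1}$. In view of the (essentially bijective) correspondence $\mu \leftrightarrow H_\mu \leftrightarrow b$, it therefore suffices to check that $H_\xi$ equals the Cayley transform of $b_\xi(Z) = Z_1$, the normalizations matching because $\xi(I) = 1$ corresponds to $H_{b_\xi}(0) = (I+0)(I-0)^{-1} = I$.

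Next I would carry out the (short) computation of $H_\xi$. Since $\xi(L^\alpha) = 1$ precisely when $\alpha = 1^n$ for some $n \ge 0$ and $\xi(L^\alpha)=0$ otherwise, and since each word $1^n$ is fixed by the letter-reversal involution $\alpha \mapsto \alpha^{\mrt}$, the Herglotz series attached to $\xi$ collapses to a geometric series: for a strict row contraction $Z = (Z_1, Z_2) \in \B^2_\N$,
$$ H_\xi(Z) = I + 2\sum_{n \ge 1} Z_1^n = (I + Z_1)(I - Z_1)^{-1}, $$
the series converging in norm because $\| Z_1 \| < 1$. This is exactly the Cayley transform of $b_\xi(Z) = Z_1$, so $\xi = \mu_{b_\xi}$. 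Moreover, the left multiplier of $\hardy$ determined by the symbol $b_\xi$ is $M^L_{\mf{z}_1} = L_1$, which is an isometry, so $b_\xi$ is a left-inner NC function, as claimed.

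Finally, because $b_\xi$ is inner, \cite[Corollary 7.25]{JM-ncFatou} gives that its NC Clark measure $\xi$ is singular, hence $\Pi_\xi$ is a Cuntz row isometry, which by \cite{JM-ncld} decomposes as the direct sum of a dilation-type and a von Neumann-type row isometry. By the preceding Claim $\Pi_\xi$ has vanishing von Neumann part, so only the dilation-type summand survives; that is, $\Pi_\xi$ is purely of dilation type. The only genuinely delicate point is bookkeeping the normalization conventions of \cite{JM-ncFatou} in the first two steps — that the moment-to-Herglotz and Cayley correspondences are set up so the constant term and the factor of $2$ come out as above — but once $\xi = \mu_{b_\xi}$ is established the structural conclusion is immediate from the two cited results and the preceding Claim; one could alternatively bypass the Herglotz function entirely by comparing $\xi$ directly against the known moment formula for $\mu_b$.
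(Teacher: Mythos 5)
Your proposal is correct and follows essentially the same route as the paper: compute the NC Herglotz function of $\xi$, observe that the only surviving moments are those of the words $1^n$ so the series collapses to $(I+Z_1)(I-Z_1)^{-1}$, identify $b_\xi(Z)=Z_1$ as inner via the Cayley transform, and then invoke \cite[Corollary 7.25]{JM-ncFatou} together with the preceding Claim to kill the von Neumann summand. The normalization bookkeeping you flag is handled identically in the paper's computation, so there is nothing further to add.
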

\begin{proof}
	Given $Z \in \B ^2_n$, let $Z \otimes L ^* := Z_1 \otimes L_1 ^* + \cdots + Z_d \otimes L_d ^*$.
	The (left) Herglotz function, $H_\xi$ of $\xi$ is
	\ba
		H_\xi (Z) & = & ( \mr{id} _n \otimes \xi ) \left( ( I_n \otimes I _{\mathbb{H} ^2 _d} +Z\otimes L^* ) ( I_n \otimes I_{\mathbb{H} ^2 _d}  - Z \otimes L ^* ) ^{-1} \right) \nn \\
		& = & 2 \sum _{\alpha} Z^\alpha \xi \left( L ^{\alpha ^\mrt}  \right) ^* -I_n \nn \\
		& = & 2 \sum _{k=0} ^\infty Z_1 ^k  - I_n = 2 (I - Z_1 ) ^{-1} - I_n \nn \\
		& = & (I + Z_1) (I -Z_1 ) ^{-1}. \nn
	\ea 
It follows that the Cayley transform, 
$$ b_\xi (Z) := (H_\xi (Z) -I )(H_\xi (Z) + I ) ^{-1}, $$ of $H_\xi$ is $b_\xi (Z) = Z_1$, which is inner.
By \cite[Corollary 7.25]{JM-ncFatou}, $\Pi _\xi$ is the direct sum of a dilation--type and a von Neumann--type row isometry and the previous claim shows that the von Neumann part vanishes.
\end{proof}

\begin{claim}
	For any word $\alpha$ such that $2\in\alpha$, the vector $L^\alpha + N_\xi$ is weak$-*$ continuous.
	In particular, the closed span of $\{L^\alpha+N_\xi:2\in\alpha\}$ is contained in $\rm{WC}(\Pi_\xi)$.
\end{claim}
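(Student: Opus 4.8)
The plan is to show that each vector $L^\alpha + N_\xi$ with $2\in\alpha$ is in fact a $\Pi_\xi$-translate of the wandering vector $L_2+N_\xi$ identified in the first claim of this section, and then to invoke the fact (Theorem~\ref{T:DLP}(2)) that $\mr{WC}(\Pi_\xi)$ is a closed $\Pi_\xi$-invariant subspace.

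Fix a word $\alpha$ containing the letter $2$, and factor it at its \emph{last} occurrence of $2$: write $\alpha=\gamma\,2\,1^{k}$, where $\gamma\in\F^2$ is the prefix preceding that final $2$ (which may itself contain $2$'s or be empty) and $1^{k}$ is the remaining suffix, which by choice of factorization contains no $2$. Then $L^\alpha=L^\gamma L_2 L_1^{k}$. The computational heart of the argument is the identity $L^\gamma L_2 L_1^{k}+N_\xi = L^\gamma L_2+N_\xi$ in $\mathbb{H}^2_2(\xi)$. This follows from a one-line norm estimate: the difference of the two vectors is the image of $L^\gamma L_2(L_1^{k}-I)$, and since $L^\gamma$ and $L_2$ are isometries,
\[
\big\|L^\gamma L_2(L_1^{k}-I)+N_\xi\big\|^2 = \xi\big((L_1^{k}-I)^*(L_1^{k}-I)\big) = 2-\xi(L_1^{k})-\xi(L_1^{k\,*}) = 0,
\]
because $\xi(L_1^{k})=Z_1^{k}=1$ and $\xi(L_1^{k\,*})=\overline{\xi(L_1^{k})}=1$, where $Z=(1,0)$. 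This is the only place the explicit form of $\xi$ enters, and it is routine.

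Given this identity, $L^\alpha+N_\xi = L^\gamma L_2+N_\xi = \Pi_\xi^\gamma(L_2+N_\xi)$. Since $L_2+N_\xi$ is a wandering vector for $\Pi_\xi$, hence weak-$*$ continuous (as shown in the proof of the first claim of this section), and $\mr{WC}(\Pi_\xi)$ is $\Pi_\xi$-invariant, we conclude $L^\alpha+N_\xi\in\mr{WC}(\Pi_\xi)$. The ``in particular'' statement is then immediate: $\mr{WC}(\Pi_\xi)$ is closed, so it contains the closed linear span of $\{L^\alpha+N_\xi:2\in\alpha\}$. There is no genuine obstacle in this argument; the only point requiring care is the bookkeeping in the factorization of $\alpha$ — one must split at the \emph{last} $2$ so that the leftover suffix is a pure power of $L_1$, which is exactly what makes the norm estimate collapse (splitting at the first $2$ would leave an arbitrary suffix and the computation would not simplify).
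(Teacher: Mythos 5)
Your proof is correct, but it takes a genuinely different route from the paper's. The paper works through the free Cauchy transform: it forms the compressed measure $\xi_\beta(L^\alpha)=\xi(L^{\beta*}L^\alpha L^\beta)$, computes the Cauchy transform of the cyclic vector $I+N_{\xi_\beta}$, observes that only finitely many Taylor coefficients survive (so the transform is a free polynomial lying in $\mathbb{H}^2_2$), and then invokes the weak$-*$ analytic vector machinery of \cite[Definition 8.2, Corollary 8.3]{JM-ncld} to conclude that $I+N_{\xi_\beta}$, and hence $L^\beta+N_\xi$, is weak$-*$ continuous. You instead prove the sharper structural fact that $L^\alpha+N_\xi$ is \emph{equal} in $\mathbb{H}^2_2(\xi)$ to $\Pi_\xi^\gamma(L_2+N_\xi)$, a translate of the wandering vector from the first claim, and then finish with the $\Pi_\xi$-invariance of $\mr{WC}(\Pi_\xi)$ from Theorem \ref{T:DLP}. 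Your norm computation is sound: $(L^\gamma L_2)^*L^\gamma L_2=I$ as operators on the Fock space, the element $(L_1^k-I)^*(L_1^k-I)=2I-L_1^k-L_1^{k*}$ lies in $\scr{A}_2$, and $\xi$ kills it because $\xi(L_1^k)=1=\xi(L_1^{k*})$ (the latter by positivity, hence self-adjointness, of $\xi$). What your approach buys is a more elementary and self-contained argument that bypasses the NC Herglotz space entirely, and it exposes extra structure: the closed span in the claim collapses to the cyclic subspace generated by the single wandering vector $L_2+N_\xi$, which makes the dilation-type picture of $\Pi_\xi$ quite transparent. What the paper's approach buys is consistency with the NC function-theoretic viewpoint the section is deliberately showcasing (Clark measures, Cauchy transforms), and a template that would still apply in examples where no wandering vector is sitting in plain view to be translated onto the given vectors.
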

The proof below uses the concept of the NC Herglotz space of NC Cauchy transforms with respect to a positive NC measure, see \cite[Section 3.8, Lemma 5.2]{JM-ncFatou}. The NC Herglotz space, $\scr{H} ^+ ( H_\mu )$ of any positive NC measure, $\mu \in \posncm$, is a non-commutative reproducing kernel Hilbert space (NC-RKHS) of NC functions in the NC unit row--ball \cite{JM-freeCE,JM-freeAC,JM-ncFatou,JM-ncld}. The details of this construction will not be relevant or needed for our purposes here. It will suffice to remark that if $\mu \in \posncm$ is a positive NC measure, then there is an onto and isometric linear map, $\scr{C} _\mu : \hardy (\mu ) \rightarrow \scr{H} ^+ (H _\mu )$, the \emph{free Cauchy transform}.
\begin{proof}
	Given any $\beta \in \F ^2$ so that $2 \in \beta$, the vector $L^\beta + N _\xi$ is a WC vector if and only if 
	$$ \xi _\beta (L^\alpha) := \ip{L^\beta + N_\xi}{\Pi _\xi^\alpha \,(L^\beta + N_\xi)}_\xi = \xi \left( L^{\beta *} L^\alpha L^\beta \right), $$
	is an absolutely continuous (weak$-*$ continuous) NC measure.
	The free Cauchy transform of $I + N_{\xi _\beta} \in \mathbb{H}^2_2 (\xi _\beta )$, is then,
	$$ h (Z) :=  \sum _{\alpha \in \F ^2} Z^\alpha \xi \left( L^{\beta *} L^{\alpha *} L^\beta \right). $$
	The Taylor coefficients $(h_\alpha)_\alpha$ of $h$ vanish if $\alpha, \beta$ are not comparable.
	Thus, $h_\alpha \neq 0$ if and only if $\alpha = \beta \ga$ or if $\beta = \alpha \ga$.
	Since $\beta \in \F ^2$ is fixed, there are only finitely many words $\alpha$ such that $\beta = \alpha \ga$.
	On the other hand if $\alpha = \beta \ga$, then 
	$$ h_\alpha = h_{\beta \ga} =  \xi  \left( L^{\beta *} L^{\ga *} \right) = 0, $$
	since $2 \in \beta$.
	This proves that $h$ has at most finitely many non-zero Taylor coefficients and so $h \in \C \{ \mf{z} _1, \mf{z} _2 \} \subseteq \mathbb{H} ^2 _2$ and $I + N_{\xi _\beta}$ is a weak$-*$ continuous vector for $\Pi_{\xi_\beta}$. Indeed, since $\scr{C} _{\xi _\beta} (I + N _{\xi _\beta} ) \in \mathbb{H} ^2 _2$, $I + N_{\xi _\beta}$ is a \emph{weak$-*$ analytic vector} for $\xi _\beta$ in the sense of \cite[Definition 8.2]{JM-ncld} and is hence a weak$-*$ continuous vector by \cite[Corollary 8.3]{JM-ncld}.
	
	Since this vector is cyclic and $\rm{WC}(\xi _\beta )$ must be $\Pi_{\xi_\beta}-$invariant,
	we see that $\rm{WC}(\Pi_{\xi_\beta} ) = \mathbb{H} ^2 _2 (\xi _\beta)$.
	Therefore, $\xi _\beta$ is weak$-*$ continuous and $L^\beta + N_\xi$ is a weak$-*$ continuous vector for $\Pi _\xi$.
\end{proof}

\begin{prop} \label{countereg}
	The positive NC measure defined by $\Xi(L^\alpha ) := \xi \left( L_2 ^* \,  L^\alpha \, L_2 \right)$ is equal to NC Lebesgue measure, $m$, and hence is weak$-*$ continuous.
	The NC measure $\ga :=  \xi ^{(2) *} \in \scr{A} _2 ^\dag$, i.e.  $c\mapsto \xi \left( c L_2  \right)$, annihilates the NC disk algebra $\mathbb{A} _2$, but is not weak$-*$ continuous.
\end{prop}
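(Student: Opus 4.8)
The plan is to establish both assertions by direct moment computations, using only the Cuntz--Toeplitz relations $L_j^*L_k=\delta_{j,k}I$, the definition of $\xi$, and the semi-Dirichlet property $\A^*\A\subseteq\scr{A}_2$.

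For the first assertion I would first note that $\Xi$ really is a positive NC measure: the completely positive map $\Phi(b):=L_2^*bL_2$ carries $\scr{A}_2$ into itself, since $cL_2\in\scr{A}_2$ for $c\in\scr{A}_2$ (as $\A L_2\subseteq\A$ and $\A^*L_2\subseteq\A^*\A\subseteq\scr{A}_2$) and then $L_2^*(cL_2)\in\scr{A}_2$ by the one-sided statement recorded before Definition \ref{bwshiftmeas}; hence $\Xi=\xi\circ\Phi\in\posncm$. Now compute $\Xi(L^\alpha)=\xi(L_2^*L^\alpha L_2)$ word by word: for $\alpha=\varnothing$ it equals $\xi(L_2^*L_2)=\xi(I)=1$; for $\alpha=i_1\cdots i_n$ with $n\ge1$ we have $L^\alpha L_2=L^{\alpha2}$, and $L_2^*L^{\alpha2}$ vanishes unless $i_1=2$, in which case it equals $L^{i_2\cdots i_n 2}$, a monomial whose word contains the letter $2$ and is therefore annihilated by $\xi$. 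Thus $\Xi(L^\alpha)=\delta_{\alpha,\varnothing}=m(L^\alpha)$ for every $\alpha\in\F^2$, and since a positive NC measure is determined by its moments, $\Xi=m$, which is absolutely continuous.

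For the second assertion, since $\xi$ is self-adjoint one has $\ga(c)=\xi^{(2)*}(c)=\overline{\xi(L_2^*c^*)}=\xi(cL_2)$, a bounded functional on $\scr{A}_2$ because $\scr{A}_2L_2\subseteq\scr{A}_2$. Analyticity is immediate: $\ga(L^\alpha)=\xi(L^{\alpha2})=0$ for every word $\alpha$, since $\alpha2$ contains the letter $2$, so by linearity and norm-continuity $\ga$ annihilates all of $\mathbb{A}_2$. To see that $\ga$ is not weak$-*$ continuous I would argue by contradiction: suppose $\ga$ extends to a weak$-*$ continuous functional $\hat\ga$ on $\scr{T}_2$, and consider the norm-bounded sequence $x_k:=(L_2L_1^k)^*=(L_1^*)^kL_2^*\in\A^*\subseteq\scr{A}_2$, $k\ge0$. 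On the one hand $x_kL_2=(L_1^*)^kL_2^*L_2=(L_1^*)^k$, so $\ga(x_k)=\xi\big((L_1^*)^k\big)=\overline{\xi(L_1^k)}=1$ for every $k$, as $L_1^k$ is a monomial in $L_1$ alone. On the other hand $(L_1^*)^k\to0$ in the strong operator topology on $\mathbb{H}^2_2$ (the standard fact for the backward free shift), hence $x_k=(L_1^*)^kL_2^*\to0$ in the weak operator topology; being norm-bounded, $(x_k)$ is then weak$-*$ null in $\scr{L}(\mathbb{H}^2_2)$, and since each $x_k$ and the limit $0$ lie in the weak$-*$ closed subspace $\scr{T}_2$, the sequence is weak$-*$ null in $\scr{T}_2$ as well. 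Weak$-*$ continuity of $\hat\ga$ would then force $1=\ga(x_k)=\hat\ga(x_k)\to\hat\ga(0)=0$, a contradiction. (Equivalently, a weak$-*$ continuous $\ga$ would, by Lemma \ref{ACvecfun}, equal $\ip{f}{(\cdot)g}$ for some $f,g\in\mathbb{H}^2_2$, forcing $\ip{L_2L_1^kf}{g}=\ga(x_k)=1$ for all $k$, which is impossible since $L_2L_1^kf\to0$ weakly.)

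All of the steps are elementary; the two points requiring some care are the bookkeeping that every operator product appearing stays inside the operator system $\scr{A}_2$ (exactly the semi-Dirichlet argument used around Definition \ref{bwshiftmeas}), and the routine passage from weak-operator to weak$-*$ convergence for the bounded sequence $(x_k)$. The conceptual heart is the cancellation $\xi\big((L_1^*)^kL_2^*L_2\big)=\xi\big((L_1^*)^k\big)=1$: the trailing $L_2$ in the definition of $\ga$ kills the $L_2^*$, leaving a moment of $\xi$ that is identically $1$ along a sequence that is otherwise weakly null --- precisely the obstruction to weak$-*$ continuity.
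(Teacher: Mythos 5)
Your proof is correct and follows essentially the same route as the paper: the moment computation $\Xi(L^\alpha)=\delta_{\alpha,\varnothing}$ is exactly how the paper's earlier claim that $L_2+N_\xi$ is a unit wandering vector is established, and the obstruction to weak$-*$ continuity is tested against the very same sequence $(L_1^k)^*L_2^*$. You merely spell out two points the paper leaves implicit (that the relevant products stay in $\scr{A}_2$, and that the bounded WOT-null sequence is weak$-*$ null), which is fine.
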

\begin{proof}
	The vector $L_2 + N_\xi$ is a unit wandering vector for the dilation--type positive NC measure $\xi$.
	Hence, $\Xi(L^\alpha ) = \delta _{\alpha , \varnothing } = m (L^\alpha )$.
	Consider the sequence $(L_1 ^k ) ^* L_2 ^* \in \scr{A} _2$.
	This converges weak$-*$ to $0$, and yet, 
	\[ 
		\ga \left( L_1 ^{k *} L_2 ^* \right) = \ov{\xi ( L_1 ^k )} = 1,
	\]
	which of course cannot converge to $0$.
\end{proof}

\bibliography{ncFnM}

\begin{thebibliography}{10}

\bibitem{BMV}
J.A. Ball, G.~Marx, and V.~Vinnikov.
\newblock Noncommutative reproducing kernel {H}ilbert spaces.
\newblock {\em J. Funct. Anal.}, 271:1844--1920, 2016.

\bibitem{BH-Toep}
H.~Arlen Brown and P.~B. Halmos.
\newblock Algebraic properties of toeplitz operators.
\newblock {\em rem}, 100:173, 1963.

\bibitem{CMT-analytic}
Rapha{\"e}l Clou{\^a}tre, Robert~TW Martin, and Edward~J Timko.
\newblock Analytic functionals for the non-commutative disc algebra.
\newblock {\em Accepted by J. Funct. Anal. arXiv:2104.02130}, 2021.

\bibitem{Cuntz}
J.~Cuntz.
\newblock Simple {$C^*-$}algebras generated by isometries.
\newblock {\em Communications in mathematical physics}, 57:173--185, 1977.

\bibitem{KRD-semi}
Kenneth~R. Davidson.
\newblock Free semigroup algebras, a survey.
\newblock In {\em Systems, approximation, singular integral operators, and
  related topics}, pages 209--240. 2001.

\bibitem{DKS-finrow}
Kenneth~R. Davidson, David~W. Kribs, and Miron~E. Shpigel.
\newblock Isometric dilations of non-commuting finite rank n-tuples.
\newblock {\em Canadian Journal of Mathematics}, 53:506--545, 2001.

\bibitem{DP-inv}
Kenneth~R. Davidson and David~R. Pitts.
\newblock Invariant subspaces and hyper-reflexivity for free semigroup
  algebras.
\newblock {\em Proceedings of the London Mathematical Society}, 78:401--430,
  1999.

\bibitem{DKP-structure}
K.R. Davidson, E.~Katsoulis, and D.R. Pitts.
\newblock The structure of free semigroup algebras.
\newblock {\em J. Reine Angew. Math.}, 533:99--126, 2001.

\bibitem{DK-dilation}
K.R. Davidson and E.G. Katsoulis.
\newblock Dilation theory, commutant lifting and semicrossed products.
\newblock {\em Doc. Math.}, 16:781--868, 2011.

\bibitem{DLP-ncld}
K.R. Davidson, J.~Li, and D.R. Pitts.
\newblock Absolutely continuous representations and a {K}aplansky density
  theorem for free semigroup algebras.
\newblock {\em J. Funct. Anal.}, 224:160--191, 2005.

\bibitem{Hoff}
K.~Hoffman.
\newblock {\em Banach spaces of analytic functions}.
\newblock Courier Corporation, 2007.

\bibitem{JM-freeAC}
M.T. Jury and R.T.W. Martin.
\newblock Non-commutative {C}lark measures for the {F}ree and {A}belian
  {T}oeplitz algebras.
\newblock {\em J. Math. Anal. Appl.}, 456:1062--1100, 2017.

\bibitem{JM-freeCE}
M.T. Jury and R.T.W. Martin.
\newblock Column-extreme multipliers of the {F}ree {H}ardy space.
\newblock {\em J. Lond. Math. Soc.}, In press, 2019.

\bibitem{JM-ncld}
M.T. Jury and R.T.W. Martin.
\newblock Lebesgue decomposition of non-commutative measures.
\newblock {\em Int. Math. Res. Not.}, In press, 2020.

\bibitem{JM-ncFatou}
M.T. Jury and R.T.W. Martin.
\newblock Fatou's theorem for non-commutative measures.
\newblock arXiv:1907.09590, 2021.

\bibitem{JMS-ncBSO}
M.T. Jury, R.T.W. Martin, and E.~Shamovich.
\newblock Blaschke--singular--outer factorization of free non-commutative
  functions.
\newblock {\em Advances in Mathematics}, 384:article 107720, 2021.

\bibitem{MK-wand}
Matthew Kennedy.
\newblock Wandering vectors and the reflexivity of free semigroup algebras.
\newblock {\em Journal f{\"u}r die reine und angewandte Mathematik (Crelle's
  Journal)}, 2011:47--53, 2011.

\bibitem{MK-rowiso}
Matthew Kennedy.
\newblock The structure of an isometric tuple.
\newblock {\em Proceedings of the London Mathematical Society}, 106:1157--1177,
  2013.

\bibitem{Paulsen}
V.~Paulsen.
\newblock {\em Completely Bounded Maps and Operator Algebras}.
\newblock Cambridge University Press, New York, NY, 2002.

\bibitem{analysisnow}
G.K. Pedersen.
\newblock {\em Analysis now}.
\newblock Springer, 2012.

\bibitem{Pop-dil}
G.~Popescu.
\newblock Isometric dilations for infinite sequences of noncommuting operators.
\newblock {\em Trans. Amer. Math. Soc.}, 316:523--536, 1989.

\bibitem{Pop-NCdisk}
G.~Popescu.
\newblock Non-commutative disc algebras and their representations.
\newblock {\em Proc. Amer. Math. Soc.}, 124:2137--2148, 1996.

\bibitem{Pop-freeholo}
G.~Popescu.
\newblock Free holomorphic functions on the unit ball of {$B (\mathcal{H})
  ^n$}.
\newblock {\em J. Funct. Anal.}, 241:268--333, 2006.

\bibitem{Pop-entropy}
Gelu Popescu.
\newblock {\em Entropy and multivariable interpolation}.
\newblock American Mathematical Soc., 2006.

\bibitem{Riesz}
F.~Riesz and M.~Riesz.
\newblock \"{U}ber die randwerte einer analytischen funktion.
\newblock {\em Quatri\`eime Congris des Math. Scand. Stockholm}, pages 27--44,
  1916.

\bibitem{SSS}
Guy Salomon, Orr Shalit, and Eli Shamovich.
\newblock Algebras of bounded noncommutative analytic functions on subvarieties
  of the noncommutative unit ball.
\newblock {\em Transactions of the American Mathematical Society},
  370:8639--8690, 2018.

\bibitem{Wittstock}
G.~Wittsock.
\newblock Ein operatorwertiger {H}ahn--{B}anach {S}atz.
\newblock {\em J. Funct. Anal.}, 40:127--150, 1981.

\end{thebibliography}

\Addresses

\end{document}